\newcommand{\Hom}{\mathrm{Hom}}
\newtheorem{lemma}{Lemma}[section]
\newtheorem{corollary}{Corollary}[section]
\newtheorem{theorem}{Theorem}
\newtheorem{remark}{Remark}[section]
\newtheorem{proposition}{Proposition}[section]
\newtheorem{definition}{Definition}[section]
\def\Hom{\mbox{Hom}\,}
\def\<{\langle}
\def\>{\rangle}
\def\to{\rightarrow}
\def\bX{{\bf X}}
\def\bA{{\bf A}}
\begin{document}

\title{Gowers norms, regularization and limits of functions on abelian groups}
\author{{\sc Bal\'azs Szegedy}}

\maketitle

\abstract{For every natural number $k$ we prove a decomposition theorem for bounded measurable functions on compact abelian groups into a structured part, a quasi random part and a small error term. In this theorem quasi randomness is measured with the Gowers norm $U_{k+1}$ and the structured part is a bounded complexity ``nilspace-polynomial'' of degree $k$. This statement implies a general inverse theorem for the $U_{k+1}$ norm. (We discuss some consequences in special families of groups such as bounded exponent groups, zero characteristic groups and the circle group.) Along these lines we introduce a convergence notion and corresponding limit objects for functions on abelian groups. This subject is closely related to the recently developed graph and hypergraph limit theory. An important goal of this paper is to put forward a new algebraic aspect of the notion ``higher order Fourier analysis''. According to this, $k$-th order Fourier analysis is regarded as the study of continuous morphisms between structures called compact $k$-step nilspaces. All our proofs are based on an underlying theory of topological nilspace factors of ultra product groups.}

\tableofcontents

\section{Introduction}

Higher order Fourier analysis is a notion which has many aspects and interpretations. The subject originates in a fundamental work by Gowers \cite{Gow},\cite{Gow2} in which he introduced a sequence of norms for functions on abelian groups and he used them to prove quantitative bounds for Szemer\'edi's theorem on arithmetic progressions \cite{Szem1} Since then many results were published towards a better understanding of the Gowers norms \cite{GrTao},\cite{GrTao2},\cite{GTZ},\cite{GowW},\cite{GowW2},\cite{GowW3},\cite{TZ},\cite{Sz1},\cite{Sz2},\cite{Sz3}
Common themes in all these works are the following four topics:

\medskip

\noindent{1.)~}{\it Inverse theorems}

\noindent{2.)~}{\it Decompositions of functions into structured and random parts}

\noindent{3.)~}{\it Counting structures in subsets and functions on abelian groups}

\noindent{4.)~}{\it Connection to ergodic theory and nilmanifolds}

\medskip

In the present paper we wish to contribute to all of these topics however we also put forward three other directions:

\medskip

\noindent{5.)~}{\it An algebraic language based on morphisms between nilspaces}

\noindent{6.)~}{\it The compact case}

\noindent{7.)~}{\it Limit objects for structures in abelian groups}

Important results on the fifth and sixth topics were also obtained by Host and Kra in the papers \cite{HKr2},\cite{HKr3}.
The paper \cite{HKr2} is the main motivation of \cite{NP} which is the corner stone of our approach.

To summarize the results in this paper we start with the definition of Gowers norms.
Let $f:A\rightarrow\mathbb{C}$ be a bounded measurable function on a compact abelian group $A$. Let $\Delta_t f$ be the function with $\Delta_tf(x)=f(x)\overline{f(x+t)}$.
With this notation
$$\|f\|_{U_k}=\Bigl(\int_{x,t_1,t_2,\dots,t_k\in A}\Delta_{t_1}\Delta_{t_2}\dots\Delta_{t_k}f(x)~d\mu^{k+1}\Bigr)^{2^{-k}}$$
where $\mu$ is the normalized Haar measure on $A$.
These norms satisfy the inequality $\|f\|_{U_k}\leq\|f\|_{U_{k+1}}$.
It is easy to verify that
$$\|f\|_{U_2}=\Bigl(\sum_{\chi\in\hat{A}}|\lambda_\chi|^4\Bigr)^{1/4}$$
where $\lambda_\chi=(f,\chi)$ is the Fourier coefficient corresponding to the linear character $\chi$.
This formula explains the behavior of the $U_2$ norm in terms of ordinary Fourier analysis.
However if $k\geq 3$, ordinary Fourier analysis does not seem to give a good understanding of the $U_k$ norm.

Small $U_2$ norm of a function $f$ with $|f|\leq 1$ is equivalent with the fact that $f$ is ``noise'' or ``quasi random'' from the ordinary Fourier analytic point of view. This means that all the Fourier coefficients have small absolute value. Such a noise however can have a higher order structure measured by one of the higher Gowers norms.
Isolating the structured part from the noise is a central topic in higher order Fourier analysis.   
In $k$-th order Fourier analysis a function $f$ is considered to be quasi random if $\|f\|_{U_{k+1}}$ is small.
As we increase $k$, this notion of noise becomes stronger and stronger and so more and more functions are considered to be structured.

The prototype of a decomposition theorem into structured and quasi random parts is Szemer\'edi's famous regularity lemma for graphs \cite{Szem2}.
The regularity lemma together with an appropriate counting lemma is a fundamental tool in combinatorics.
It is natural to expect that a similar regularization corresponding to the $U_{k+1}$ norm is helpful in additive combinatorics.
One can state the graph regularity lemma as a decomposition theorem for functions of the form $f:V\times V\rightarrow\mathbb{C}$ with $\|f\|\leq 1$.
Roughly speaking it says that $f=f_s+f_e+r_r$ where $f_r$ has small cut norm, $f_e$ has small $L^1$ norm and $f_s$ is of bounded complexity. (All the previous norms are normalized to give $1$ for the constant $1$ function.) We say that $f_r$ has complexity $m$ if there is a partition of $V$ into $m$ almost equal parts such that $f_s(x,y)$ depends only on the partition sets containing $x$ and $y$. This can also be formulated in a more algebraic way. A complexity $m$ function on $V\times V$ is the composition of $\phi:V\times V\rightarrow [m]\times [m]$ ({\bf algebraic part}) with another function $f:[m]\times [m]\rightarrow\mathbb{C}$ ({\bf analytic part}) where $[m]$ is the set of first $m$ natural numbers and $\phi$ preserves the product structure in the sense that $\psi=g\times g$ for some map $g:V\rightarrow [m]$. 
In this language the requirement that the partition sets are of almost equal size translates to the condition that $\phi$ is close to be preserving the uniform measure. 

Based on this (with some optimism) one can expect that there is a regularity lemma corresponding to the $U_{k+1}$ norm of a similar form.
This means that a bounded (measurable) function $f$ on a finite (or more generally on a compact) abelian group is decomposable as 
$f=f_s+f_e+f_r$ where $\|f_r\|_{U_{k+1}}$ is small, $\|f_e\|_1$ is small and $f_s$ can be obtained as the composition of $\phi:A\rightarrow N$ (algebraic part) and $g:N\rightarrow\mathbb{C}$ (analytic part) where $\phi$ is some kind of algebraic morphism preserving an appropriate structure on $A$.
The function $\phi$ would correspond to a regularity partition and $g$ would correspond to a function associating probabilities with the partition sets.
As we could assume the almost equality of the partition sets in Szemer\'edi's lemma we also expect that $\phi$ can be required to satisfy some almost measure preserving property. 

In this paper we show that this optimistic picture is almost exactly true with some interesting additional features.
Based on the graph regularity lemma we would expect that $N$ is also an abelian group. However quite interestingly (except for the case $k=1$) abelian groups are not enough for this purpose. To get the regularity lemma for the $U_{k+1}$ norm we will need to introduce $k$-step nilspaces that are generalizations of abelian groups. It turns out that $k$-step nilspaces are forming a category and the morphisms are suitable for the purpose of regularization. (The need for extra structures is less surprising if we compare the $U_{k+1}$ regularity lemma with the regularization of $k+1$ uniform hypergraphs. Except for the case $k=1$, which is the graph case, partitions of the vertex set are not enough to regularize hypergraphs.)

Another interesting phenomenon is that topology comes into the picture.
Topology doesn't seem to play an important role in stating the regularity lemma for graphs. (Note that a connection of Szemer\'edi's regularity lemma to topology was pointed out in \cite{LSz4}) However, quite surprisingly, in the abelian group case even if we just want to regularize functions on finite abelian groups the target space $N$ of $\phi$ needs to be a compact topological nilsapce. Furthermore we will require that the function $g$ is ``smooth'' enough with respect to the topology on $N$. A possible way of doing it is to require that $g$ is continuous with bounded Lipschitz constant in some fixed metric on $N$. However the Lipschitz condition is not crucial in our approach. It can be replaced by almost any reasonable complexity notion. For example we can use an arbitrary ordering of an arbitrary countable $L^\infty$-dense set of continuous functions on $N$ and then we can require that $g$ is on this list with a bounded index.
  
\bigskip

To state our regularity lemma we will need the definition of nilspaces.
Nilspaces are common generalizations of abelian groups and nilmanifolds.
An abstract cube of dimension $n$ is the set $\{0,1\}^n$.
A cube of dimension $n$ in an abelian group $A$ is a function $f:\{0,1\}^n\rightarrow A$ which extends to an affine homomorphism (a homomorphism plus a translation) $f':\mathbb{Z}^n\rightarrow A$. Similarly, a morphism $\psi:\{0,1\}^n\rightarrow\{0,1\}^m$ between abstract cubes is a map which extends to an affine morphism from $\mathbb{Z}^n\rightarrow\mathbb{Z}^m$.

Roughly speaking, a nilspace is a structure in which cubes of every dimension are defined and they behave very similarly as cubes in abelian groups.
\begin{definition}[Nilspace axioms] A nilspace is a set $N$ and a collection $C^n(N)\subseteq N^{\{0,1\}^n}$ of functions (or cubes) of the form $f:\{0,1\}^n\rightarrow N$ such that the following axioms hold.
\begin{enumerate}
\item {\bf(Composition)} If $\psi:\{0,1\}^n\rightarrow\{0,1\}^m$ is a cube morphism and $f:\{0,1\}^m\rightarrow N$ is in $C^m(N)$ then the composition $\psi\circ f$ is in $C^n(N)$.
\item {\bf(Ergodictiry)} $C^1(N)=N^{\{0,1\}}$.
\item {\bf(Gluing)} If a map $f:\{0,1\}^n\setminus\{1^n\}\rightarrow N$ is in $C^{n-1}(N)$ restricted to each $n-1$ dimensional face containing $0^n$ then $f$ extends to the full cube as a map in $C^n(N)$.
\end{enumerate} 
\end{definition}

\bigskip

If $N$ is a nilspace and in the third axiom the extension is unique for $n=k+1$ then we say that $N$ is a $k$-step nilspace.
If a space $N$ satisfies the first axiom (but the last two are not required) then we say that $N$ is a {\bf cubespace}. 
A function $f:N_1\rightarrow N_2$ between two cubespaces is called a {\bf morphism} if $\phi\circ f$ is in $C^n(N_2)$ for every $n$ and function $\phi\in C^n(N_1)$.
The set of morphisms between $N_1$ and $N_2$ is denoted by $\Hom(N_1,N_2)$. With this notation $C^n(N)=\Hom(\{0,1\}^n,N)$.
If $N$ is a nilspace then every morphism $f:\{0,1\}^n\rightarrow\{0,1\}^m$ induces a map $\hat{f}:C^m(N)\rightarrow C^n(N)$ by simply composing $f$ with maps in $C^m(N)$. We say that $N$ is a compact Hausdorff nilspace if all the spaces $C^n(N)$ are compact Hausdorff spaces and $\hat{f}$ is continuous for every $f$.

The nilspace axiom system is a variant of the Host-Kra axiom system for parallelepiped structures \cite{HKr2}. In \cite{HKr2} the two step case is analyzed and it is proved that the structures are tied to two nilpotent groups. A systematic analysis of $k$-step nilspaces was carried out by O. Camarena and the author in \cite{NP}. Compact nilspaces get special attention in \cite{NP}. It will be important that the notion of Haar measure can be generalized for compact nilspaces. One of the main results in \cite{NP} says that compact nilspaces are inverse limits of finite dimensional ones and the connected components of a finite dimensional compact nilspace are nilmanifolds with cubes defined through a given filtration on the nilpotent group. It is crucial that a $k$-step compact nilspace $N$ can be built up using $k$ compact abelian groups $A_1,A_2,\dots,A_k$ as structure groups in a $k$-fold iterated abelian bundle. The nilspace $N$ is finite dimensional if and only if all the structure groups are finite dimensional or equivalently: the dual groups $\hat{A_1},\hat{A_2},\dots,\hat{A_n}$ are all finitely generated. 
It follows from the results in \cite{NP} that there are countably many finite dimensional $k$-step nilspaces up to isomorphism. An arbitrary ordering on them will be called a {\bf complexity notion}.

For every finite dimensional nilspace $N$ and natural number $n$ we fix a metrization of the weak convergence of probability measures on $C^n(N)$.
Let $M$ and $N$ be (at most) $k$-step compact nilspaces such that $N$ is finite dimensional. Let $\phi:M\rightarrow N$ be a continuous morphism and let us denote by $\phi_n:C^n(M)\rightarrow C^n(N)$ the map induced by $\phi$ using composition. The map $\phi$ is called $b$-balanced if the probability distribution of $\phi_n(x)$ for a random $x\in C^n(M)$ is at most $b$-far from the uniform distribution on $C^n(N)$ whenever $n\leq 1/b$.
Being well balanced expresses a very strong surjectivity property of morphisms which is for example useful in counting.

\begin{definition}[Nilspace-polynomials] Let $A$ be a compact abelian group. A function $f:A\rightarrow\mathbb{C}$ with $|f|\leq 1$ is called a $k$-degree, complexity $m$ and $b$-balanced nilspace-polynomial if
\begin{enumerate}
\item $f=\phi\circ g$ where $\phi:A\rightarrow N$ is a continuous morphism of $A$ into a finite dimensional compact nilspace $N$,
\item $N$ is of complexity at most $m$,
\item $\phi$ is $b$-balanced,
\item $g$ is continuous with Lipschitz constant $m$. 
\end{enumerate}
\end{definition}

Note, that (as it will turn out) a nilspace-polynomial on a cyclic group is polynomial nilsequence with an extra periodicity property.
Now we are ready to state the decomposition theorem.

\begin{theorem}[Regularization]\label{reglem} Let $k$ be a fixed number and $F:\mathbb{R}^+\times\mathbb{N}\rightarrow\mathbb{R^+}$ be an arbitrary function. Then for every $\epsilon>0$ there is a number $n=n(\epsilon,F)$ such that for every measurable function $f:A\rightarrow\mathbb{C}$ on a compact abelian group $A$ with $|f|\leq 1$ there is a decomposition $f=f_s+f_e+f_r$ and number $m\leq n$ such that the following conditions hold.
\begin{enumerate}
\item $f_s$ is a degree $k$, complexity $m$ and $F(\epsilon,m)$-balanced nilspace-polynomial,
\item $\|f_e\|_1\leq\epsilon$,
\item $\|f_r\|_{U_{k+1}}\leq F(\epsilon,m)$~,~$|f_r|\leq 1$ and $|(f_r,f_s+f_e)|\leq F(\epsilon,m)$.
\item $|\|f_s+f_e\|_{U_{k+1}}-\|f\|_{U_{k+1}}|\leq F(\epsilon,m)$
\end{enumerate}
\end{theorem}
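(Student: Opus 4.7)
The plan is to prove this by a compactness (ultralimit) argument, fitting the remark in the abstract that all the proofs rest on topological nilspace factors of ultraproduct groups. Suppose for contradiction that the statement fails for some $\e > 0$ and some function $F$. Then there is a sequence of compact abelian groups $A_n$ and measurable functions $f_n : A_n \to \bC$ with $|f_n| \le 1$ for which no admissible decomposition exists, even if the complexity bound $m \le n$ is allowed to grow. Form the ultraproduct $\bA = \prod_\omega A_n$ with its Loeb probability measure, and let $\ff$ be the bounded measurable function on $\bA$ obtained as the ultralimit of the $f_n$. The Gowers norm $U_{k+1}$ extends to $\bA$ in the natural way, and the task becomes to decompose $\ff$ cleanly on $\bA$ and then transfer the pieces back to almost every $A_n$.

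Next, invoke the theory of nilspace factors of $\bA$: the ultraproduct carries a canonical increasing sequence of $\sigma$-subalgebras $\cF_1 \sq \cF_2 \sq \cdots$, where $\cF_k$ is the pullback of a compact Hausdorff $k$-step nilspace $\cN_k$ (the $k$-step nilspace factor of $\bA$). The key analytic fact, which is the abelian group analogue of the Host--Kra characterization, is that a bounded measurable $h$ on $\bA$ satisfies $\|h\|_{U_{k+1}} = 0$ if and only if $\bE[h \mid \cF_k] = 0$. Putting $h_s := \bE[\ff \mid \cF_k]$ and $h_r := \ff - h_s$ therefore yields a decomposition with $\|h_r\|_{U_{k+1}} = 0$, $(h_r, h_s) = 0$, and $\|h_s\|_{U_{k+1}} = \|\ff\|_{U_{k+1}}$. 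The function $h_s$ is measurable with respect to the compact nilspace $\cN_k$, so it remains to replace it by a continuous Lipschitz function composed with a continuous morphism into a finite-dimensional nilspace.

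Here one exploits the main structural result of \cite{NP}: $\cN_k$ is an inverse limit of finite-dimensional compact $k$-step nilspaces $N_1 \leftarrow N_2 \leftarrow \cdots$. Approximating $h_s$ in $L^1(\bA)$ to within $\e/2$ by a function of the form $g \circ \phi$, where $\phi : \bA \to N_m$ is a continuous morphism factoring through $\cN_k$ and $g : N_m \to \bC$ is Lipschitz with constant at most $m$, produces the structured part, with the discrepancy absorbed into $f_e$. Balancedness of $\phi$ follows from the fact that $\phi$ pushes the Loeb measure on each $C^n(\bA)$ to the canonical Haar measure on $C^n(N_m)$, which itself reflects that $\cN_k$ is the cubic factor of $\bA$; for fixed $N_m$ this is an intrinsic property of the inverse limit, so $\phi$ is automatically $b$-balanced for $b$ as small as wanted once $m$ is chosen.

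Finally, descend to the finite groups. Because $(N_m, g)$ is determined by finitely much data and $\phi$ is represented by a sequence of continuous morphisms $\phi_n : A_n \to N_m$, setting $f_{s,n} := g \circ \phi_n$, together with the induced $f_{e,n}$ and $f_{r,n}$, gives a candidate decomposition of $f_n$. All four numerical conditions---the $L^1$ bound on $f_{e,n}$, the $U_{k+1}$ bound on $f_{r,n}$, the near-orthogonality $|(f_{r,n}, f_{s,n}+f_{e,n})|$, and the near-preservation of $\|f_n\|_{U_{k+1}}$---transfer from $\bA$ to $A_n$ for $\omega$-almost every $n$ by \L os-type continuity of the relevant integrals, contradicting the defining property of the counterexample sequence once the thresholds are small enough. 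The principal obstacle, and the reason the argument is not a routine Szemer\'edi-style iteration, lies in the middle step: constructing the balanced, Lipschitz, finite-dimensional nilspace approximation of $\bE[\ff \mid \cF_k]$ while preserving both continuity of the morphism and its measure-theoretic surjectivity on every cube space rests squarely on the deep structure theory of compact nilspaces developed in \cite{NP}.
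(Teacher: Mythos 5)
Your overall strategy coincides with the paper's: argue by contradiction, pass to the ultraproduct $\bA$, split the ultralimit function into $\mathbb{E}(f|\mathcal{F}_k)$ plus a part with vanishing $U_{k+1}$-norm, realize the structured part on a compact nilspace factor, approximate through the inverse limit of finite-dimensional nilspaces by a Lipschitz function composed with a continuous morphism, and transfer the resulting decomposition back to $\omega$-almost every $A_n$ by taking ultralimits of the relevant integrals. There is, however, one concrete gap in the descent step.

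You assert that ``$\phi$ is represented by a sequence of continuous morphisms $\phi_n : A_n \to N_m$.'' The ultraproduct machinery only gives that the continuous map $\phi:\bA\to N_m$ is the ultralimit of continuous \emph{functions} $\phi_n':A_n\to N_m$; there is no reason these should be nilspace morphisms, and if they are not, $g\circ\phi_n'$ is not a nilspace-polynomial and condition (1) of the theorem fails. The morphism property of $\phi$ on $\bA$ only forces the $\phi_n'$ to be \emph{approximate} morphisms, with cube-defect tending to $0$ along $\omega$; to conclude one needs a separate rigidity result — the paper quotes it from a companion work — stating that an approximate morphism into a fixed finite-dimensional compact nilspace can be corrected, by a uniformly small pointwise perturbation, to an exact morphism $\phi_n$. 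Without that correction lemma the argument does not close. A more minor imprecision: $\mathcal{F}_k$ is not itself the pullback of a single compact Hausdorff $k$-step nilspace (it is not separable); what is true, and what the paper uses, is that the separable $\sigma$-algebra generated by $\mathbb{E}(f|\mathcal{F}_k)$ is contained in the pullback of a character-preserving compact $k$-step nilspace factor, and it is the measure-preserving (hence $0$-balanced) property of that factor, established in Theorem \ref{charpres}, that yields the balancedness of the finite-level morphisms after transfer — not an intrinsic property of the inverse limit alone.
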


\begin{remark} The Gowers norms can also be defined for functions on $k$-step compact nilspaces. It makes sense to generalize our resuts from abelian groups to nilspaces. Almost all the proofs are essentially the same. This shows that the (algebraic part) of $k$-th order Fourier analysis deals with continuous functions between $k$-step nilspaces. 
\end{remark}

Note that various other conditions could be put on the list in theorem \ref{reglem}.
For example the proof shows that $f_s$ looks approximately like a projection of $f$ to a $\sigma$-algebra. This imposes strong restrictions on the value distribution of $f_s$ in terms of the value distribution of $f$. 

Theorem \ref{reglem} implies an inverse theorem for the $U_{k+1}$-norm.
It says that if $\|f\|_{U_{k+1}}$ is separated from $0$ then it correlates with a bounded complexity $k$-degree nilspace polynomial $\phi$.
(We can also require the function $\phi$ to be arbitrary well balanced in terms of its complexity but we omit this from the statement to keep it simple.)

\begin{theorem}[General inverse theorem for $U_{k+1}$]\label{invthem} Let us fix a natural number $k$. For every $\epsilon>0$ there is a number $n$ such that if $\|f\|_{U_{k+1}}\geq\epsilon$ for some measurable $f:A\rightarrow\mathbb{C}$ on a compact abelian group $A$ with $|f|\leq 1$ then $(f,g)\geq\epsilon^{2^k}/2$ for some degree $k$, complexity at most $n$ nilspace-polynomial.
\end{theorem}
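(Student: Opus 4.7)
The plan is to derive the inverse theorem directly from the regularization Theorem~\ref{reglem}: apply the regularization to $f$ with well-chosen parameters, take the structured part $f_s$ (or a character extracted from it) as the nilspace polynomial $g$, and verify that it correlates with $f$ at the required rate.

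First I would apply Theorem~\ref{reglem} to $f$ with an auxiliary parameter $\epsilon'>0$ and a function $F$ chosen small enough that $\epsilon'$ and $F(\epsilon',m)$ are negligible compared with $\epsilon^{2^{k+1}}$ for every $m$ up to the bound $n(\epsilon',F)$ returned by the theorem. This yields a decomposition $f=f_s+f_e+f_r$ with $f_s$ a degree-$k$, bounded-complexity, $F(\epsilon',m)$-balanced nilspace polynomial, $\|f_e\|_1\le\epsilon'$, and the inner-product and Gowers-norm conditions (3)--(4) holding with the small bound $F(\epsilon',m)$.

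Next I would show that $\|f_s\|_{U_{k+1}}$ is close to $\|f\|_{U_{k+1}}\ge\epsilon$. Condition~(4) gives $\|f_s+f_e\|_{U_{k+1}}\ge\epsilon-F$; the elementary integration bound $\|f_e\|_{U_{k+1}}^{2^{k+1}}\le\|f_e\|_\infty^{2^{k+1}-1}\|f_e\|_1$ together with $|f_e|\le 3$ and $\|f_e\|_1\le\epsilon'$ makes $\|f_e\|_{U_{k+1}}$ tiny, so the triangle inequality forces $\|f_s\|_{U_{k+1}}\ge\epsilon/2$. Taking the trial choice $g=f_s$ and expanding $(f,f_s)=\|f_s\|_2^2+(f_e,f_s)+(f_r,f_s)$, the error terms are controlled by $|(f_e,f_s)|\le\|f_e\|_1\le\epsilon'$ and $|(f_r,f_s)|\le|(f_r,f_s+f_e)|+\|f_e\|_1\le F+\epsilon'$ using condition~(3). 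Combined with the easy inequality $\|f_s\|_2^2\ge\|f_s\|_{U_{k+1}}^{2^{k+1}}$ (valid since $|f_s|\le1$), this already produces $(f,f_s)\ge c_k\,\epsilon^{2^{k+1}}$ for a constant $c_k>0$.

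The main obstacle is sharpening this lower bound to the claimed $\epsilon^{2^k}/2$ (which for $\epsilon<1$ is strictly larger than $\epsilon^{2^{k+1}}$). For that I would further analyze the representation $f_s=h\circ\phi$, where $\phi:A\to N$ is the given continuous morphism into a finite-dimensional compact $k$-step nilspace and $h:N\to\mathbb{C}$ is Lipschitz: using the Fourier-type spectral decomposition of continuous functions on such nilspaces developed in~\cite{NP} (built inductively from the $k$ abelian structure groups $A_1,\dots,A_k$), expand $h=\sum_j c_j\chi_j$ as a sum of degree-at-most-$k$ nilspace characters. The $b$-balanced property of $\phi$ (with $b$ taken small via the choice of $F$) makes the pulled-back characters $\chi_j\circ\phi$ essentially orthonormal on $A$, so $\|f_s\|_2^2$ and $\|f_s\|_{U_{k+1}}^{2^{k+1}}$ reduce to $\sum_j|c_j|^2$ and $\sum_j|c_j|^{2^{k+1}}$ up to tiny errors. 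A H\"older-type pigeonhole of the shape $\sum_j|c_j|^{2^{k+1}}\le(\max_j|c_j|)^{2^{k+1}-2}\sum_j|c_j|^2$ then extracts a single character $g=\chi_{j^*}\circ\phi$ with $|(f,g)|$ of order $\epsilon^{2^k}$, exactly paralleling the classical extraction $\max_\chi|\hat f(\chi)|\ge\|f\|_{U_2}^2$ that proves the $U_2$ inverse theorem. Since $\chi_{j^*}\circ\phi$ is itself a bounded-complexity nilspace polynomial by construction, this $g$ meets the theorem's requirements.
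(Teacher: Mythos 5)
Your first two paragraphs follow the paper's own route: apply Theorem~\ref{reglem} with parameters chosen so that $\epsilon'$ and $F(\epsilon',m)$ are negligible, take $g=f_s$, and reduce the problem to bounding $\|f_s\|_2^2$ from below by a power of $\|f_s\|_{U_{k+1}}\approx\|f\|_{U_{k+1}}$. The genuine gap is in how you close this last step. You invoke only the lossy inequality $\|f_s\|_2^2\geq\|f_s\|_{U_{k+1}}^{2^{k+1}}$ and then, to recover the exponent $2^k$, resort to a ``Fourier-type spectral decomposition'' of $h$ into degree-$k$ nilspace characters with an approximate identity $\|f_s\|_{U_{k+1}}^{2^{k+1}}\approx\sum_j|c_j|^{2^{k+1}}$. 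That identity is special to the case $k=1$ (where it is the classical formula $\|f\|_{U_2}^4=\sum_\chi|\hat f(\chi)|^4$); for $k\geq2$ no such power formula for the $U_{k+1}$ norm in terms of a character expansion is available, and neither this paper nor \cite{NP} provides an orthonormal system of ``nilspace characters'' on a finite dimensional $k$-step nilspace with that property. The higher order decomposition used in the paper is into rank-one modules over $L^\infty(\mathcal F_{k-1})$, which is a much weaker structure, so your final paragraph does not go through as written.

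The repair is elementary and makes the spectral detour unnecessary: the paper's Corollary~\ref{l2becs} states that for $|f|\leq1$ one has $\|f\|_2^2\geq\|f\|_{U_{k+1}}^{2^k}$ (exponent $2^k$, not $2^{k+1}$). This follows from the chain $\|f\|_{U_{k+1}}\leq\|f\|_{2^k}$ together with $\int|f|^{2^k}\leq\int|f|^2$ when $|f|\leq1$, and it applies to $f_s$ because a nilspace-polynomial is by definition bounded by $1$. Substituting this sharper bound into your own computation $(f,f_s)=\|f_s\|_2^2+(f_e,f_s)+(f_r,f_s)$, with the error terms controlled exactly as you describe, yields $(f,f_s)\geq\|f_s\|_{U_{k+1}}^{2^k}-O(\epsilon'+F)\geq\frac{2}{3}\epsilon^{2^k}-O(\epsilon'+F)\geq\epsilon^{2^k}/2$ for $\epsilon'$ small enough. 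This is precisely the paper's argument; with that one-line substitution your proof is correct and your third paragraph can be deleted.
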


Note that this inverse theorem is exact in the sense that if $f$ correlates with a bounded complexity nilspace polynomial then its Gowers norm is separated from $0$.

\bigskip

A strengthening of the decomposition theorem \ref{reglem} and inverse theorem \ref{invthem} deals with the situation when the abelian groups are from special families. 
For example we can restrict our attention to elementary abelian $p$-groups with a fixed prime $p$. Another interesting case is the set of cyclic groups or bounded rank abelian groups.
It also make sense to develop a theory for one particular infinite compact group like the circle group $\mathbb{R}/\mathbb{Z}$.
It turns out that in such restricted families of groups we get restrictions on the structure groups of the nilspaces we have to use in our decomposition theorem.
To formulate these restrictions we need the next definition.

\begin{definition} Let $\mathfrak{A}$ be a family of compact abelian groups. We denote by $(\mathfrak{A})_k$ the set of finitely generated groups that arise as subgroups of $\hat{\bA}_k$ where $\bA$ is some ultra product of groups in $\mathfrak{A}$ and $\hat{\bA}_k$ is the $k$-th order dual group of $\bA$ in the sense of \cite{Sz2}. 
\end{definition}

A good description of $(\mathfrak{A})_k$ is available in \cite{Sz2} but in this paper we focus on the following cases.

\begin{enumerate}
\item {\bf (Bounded exponent and characteristic $p$)}~If $\mathfrak{A}$ is the set of finite groups of exponent $n$ then $(\mathfrak{A})_k=\mathfrak{A}$ for every $k$. In particular if $n=p$ prime then $\mathfrak{A}$ and $(\mathfrak{A})_k$ are just the collection of finite dimensional vector spaces over the field with $p$ elements.
\item {\bf (Bounded rank)}~If $\mathfrak{A}$ is the set of finite abelian groups of rank at most $d$ then $(\mathfrak{A})_1$ is the collection of all finitely generated abelian groups but $(\mathfrak{A}_k)$ contains only free abelian groups if $k\geq 2$. The case $d=1$ is the case of cyclic groups.
\item {\bf (Characteristic $0$)}~If $\mathfrak{A}$ is a family of finite abelian groups in which for every natural number $n$ there are only finitely many groups with order divisible by $n$ then $(\mathfrak{A}_k)$ contains only free abelian groups for every $k$.
\item {\bf (Circle group)}~If $\mathfrak{A}$ contains only the circle group $\mathbb{R}/\mathbb{Z}$ then $(\mathfrak{A})_k$ contains only free abelian groups for every $k$.
\end{enumerate} 

\begin{definition} Let $\mathfrak{A}$ be a family of compact abelian groups. A $k$-step $\mathfrak{A}$-nilspace is a finite dimensional nilspace with structure groups $A_1,A_2,\dots,A_k$ such that $\hat{A_i}\in(\mathfrak{A})_i$ for every $1\leq i\leq k$.
A nilspace polynomial is called $\mathfrak{A}$-nilspace polynomial if the corresponding morphism goes into an $\mathfrak{A}$-nilspace.\end{definition}
Then we have the following.

\begin{theorem}[Regularization in special families]\label{restreg} Let $\mathfrak{A}$ be a set of compact abelian groups. Then Theorem \ref{reglem} restricted to functions on groups from $\mathfrak{A}$ is true with the stronger implication that the structured part $f_s$ is an $\mathfrak{A}$-nilspace polynomial.
\end{theorem}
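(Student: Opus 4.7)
The plan is to re-run the proof of Theorem \ref{reglem} essentially verbatim but to keep track, at the end, of the structure groups of the nilspace $N$ carrying the structured part, and to show that these structure groups are forced into the families $(\mathfrak{A})_i$ as soon as the inputs live in $\mathfrak{A}$. I will argue by contradiction: if the theorem failed for $\mathfrak{A}$, then for some fixed $k$, $\epsilon$ and $F$ there would exist a sequence of functions $f_n:A_n\to\mathbb{C}$ with $|f_n|\leq 1$ and $A_n\in\mathfrak{A}$ admitting no decomposition of the required form in which the structured part is an $\mathfrak{A}$-nilspace polynomial.

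Forming the ultra product group $\bA=\prod_\omega A_n$ and the ultra limit $\ff=\lim_\omega f_n\in L^\infty(\bA)$, I would then invoke the same machinery used to prove Theorem \ref{reglem}: the theory of topological nilspace factors of ultra product groups produces a $k$-step topological nilspace factor $\pi:\bA\to \bN$ whose associated conditional expectation is the $k$-th order structured part of $\ff$. Since $\bN$ is an inverse limit of finite-dimensional compact $k$-step nilspaces, a suitable truncation yields a finite-dimensional $k$-step nilspace $N$, a continuous morphism $\phi:\bA\to N$ and a Lipschitz $g:N\to\mathbb{C}$ so that $g\circ \phi$ approximates the structured component; the Gowers-norm control of the residual, the smallness of the $L^1$ error, and the $b$-balancedness of $\phi$ are obtained by the same energy/regularization arguments on $\bA$ and then transferred back to almost every $f_n$ via the ultrafilter $\omega$, exactly as in the proof of the unrestricted theorem.

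The only substantially new ingredient, and also the main obstacle, is the identification of the structure groups of $N$. The structure groups $A_1,\ldots,A_k$ of the topological nilspace factor $\bN$ of $\bA$ are by construction compact abelian groups whose Pontryagin duals embed naturally into the higher-order dual groups $\hat{\bA}_i$ (this is precisely the higher-order duality developed in \cite{Sz2}). Passing from $\bN$ to its finite-dimensional factor $N$ corresponds, on the dual side, to selecting a finitely generated subgroup of $\hat{A}_i$, hence a finitely generated subgroup of $\hat{\bA}_i$, which by definition lies in $(\mathfrak{A})_i$. Therefore $N$ is an $\mathfrak{A}$-nilspace and $g\circ\phi$ is an $\mathfrak{A}$-nilspace polynomial, contradicting the choice of the sequence $f_n$ and completing the argument. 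The work, then, is entirely in making the bridge between the structure groups of the nilspace factor of $\bA$ and the higher order dual $\hat{\bA}_i$ precise, which one quotes from \cite{Sz2} rather than rederives.
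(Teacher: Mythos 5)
Your proposal is correct and follows essentially the same route as the paper: the paper's proof of Theorem \ref{restreg} simply observes that the finite-dimensional nilspace $N_j$ produced in the proof of Theorem \ref{reglem} is a character preserving factor of $\bA$, so the dual of its $i$-th structure group embeds as a finitely generated subgroup of $\hat{\bA}_i$ and hence lies in $(\mathfrak{A})_i$ by definition. The only cosmetic difference is that you quote the duality from \cite{Sz2}, whereas the paper's mechanism is the character preserving property of the factor established in Theorem \ref{charpres} and built into Theorem \ref{ultdecomp}.
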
 

\begin{theorem}[Specialized inverse theorem for $U_{k+1}$]\label{restinv} Let $\mathfrak{A}$ be a set of compact abelian groups. Then for functions on groups in $\mathfrak{A}$ theorem \ref{invthem} holds with $\mathfrak{A}$-nilspace polynomials.
\end{theorem}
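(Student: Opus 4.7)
The plan is to deduce Theorem \ref{restinv} from the specialized regularization Theorem \ref{restreg} by exactly the same analytic reduction that would extract Theorem \ref{invthem} from Theorem \ref{reglem}. Since this reduction only manipulates $f_s$ as a bounded measurable function on $A$, it automatically preserves the property that $f_s$ takes values in an $\mathfrak{A}$-nilspace, so no new algebraic ingredient is required beyond what is already packaged in Theorem \ref{restreg}.

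Given $f:A\to\mathbb{C}$ with $|f|\leq 1$, $A\in\mathfrak{A}$, and $\|f\|_{U_{k+1}}\geq\epsilon$, I would apply Theorem \ref{restreg} with a small parameter $\epsilon'$ and an error function $F(\epsilon',m)$ to be tuned at the end, obtaining $f=f_s+f_e+f_r$ where $f_s$ is a degree-$k$, complexity $\leq m$ $\mathfrak{A}$-nilspace polynomial. First I transfer the $U_{k+1}$ lower bound from $f$ to $f_s$: condition (4) of the regularization gives $\|f_s+f_e\|_{U_{k+1}}\geq\epsilon-F$, and expanding the $2^{k+1}$-fold Gowers inner product that defines $\|f_s+f_e\|_{U_{k+1}}^{2^{k+1}}$ as a sum over the $2^{2^{k+1}}$ maps $\{0,1\}^{k+1}\to\{s,e\}$, every monomial containing at least one $f_e$ factor is bounded by $\|f_e\|_1\leq\epsilon'$ via the elementary estimate that any Gowers inner product of $(g_w)_{w\in\{0,1\}^{k+1}}$ is dominated by $\|g_{w_0}\|_1\prod_{w\neq w_0}\|g_w\|_\infty$. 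This yields $\|f_s\|_{U_{k+1}}^{2^{k+1}}\geq\epsilon^{2^{k+1}}-C_k(F+\epsilon')$ for an explicit constant $C_k$.

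Next I promote the $U_{k+1}$ bound to an $L^2$ bound via the inequality $\|g\|_2^2\geq\|g\|_{U_{k+1}}^{2^k}$ valid for every $g$ with $|g|\leq 1$. For $k=1$ this is Parseval ($\|g\|_{U_2}^4=\sum|\hat g(\chi)|^4\leq(\sum|\hat g(\chi)|^2)^2=\|g\|_2^4$); for $k\geq 2$ it follows by induction on $k$ using the recursion $\|g\|_{U_{k+2}}^{2^{k+2}}=\mathbb{E}_t\|\Delta_tg\|_{U_{k+1}}^{2^{k+1}}$, the inductive bound applied to $\Delta_tg$ (noting $\|\Delta_tg\|_\infty\leq 1$), the pointwise inequality $\|\Delta_tg\|_2^4\leq\|\Delta_tg\|_2^2$, and the identity $\mathbb{E}_t\|\Delta_tg\|_2^2=\|g\|_2^4$. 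Applied to $f_s$ this yields $\|f_s\|_2^2\geq\epsilon^{2^k}(1-o(1))$ as $F,\epsilon'\to 0$.

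Taking $g=f_s$ and expanding
\[
(f,f_s)=\|f_s\|_2^2+(f_e,f_s)+(f_r,f_s),
\]
the cross terms are controlled: $|(f_e,f_s)|\leq\|f_e\|_1\cdot\|f_s\|_\infty\leq\epsilon'$, and $|(f_r,f_s)|\leq|(f_r,f_s+f_e)|+|(f_r,f_e)|\leq F+\epsilon'$ by $|f_r|\leq 1$ and condition (3) of the regularization. Choosing $\epsilon'$ and the function $F$ small enough that $2\epsilon'+F$ together with the $o(1)$ slack in the $L^2$-bound is absorbed into $\epsilon^{2^k}/2$ then gives $(f,f_s)\geq\epsilon^{2^k}/2$, and $f_s$ is the required $\mathfrak{A}$-nilspace polynomial of complexity at most $n=n(\epsilon',F)$. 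The only real work is the parameter bookkeeping: since $F$ in Theorem \ref{restreg} may be chosen arbitrarily, decaying sufficiently fast in both its arguments absorbs every error, so no genuine obstacle arises once Theorem \ref{restreg} is available.
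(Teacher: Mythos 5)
Your proposal is correct and follows essentially the same route as the paper: apply the specialized regularization Theorem \ref{restreg} with suitably chosen small parameters, transfer the $U_{k+1}$ lower bound to $f_s$, invoke the inequality $\|f_s\|_2^2\geq\|f_s\|_{U_{k+1}}^{2^k}$ (the paper's Corollary \ref{l2becs}), and control the cross terms via conditions (2)--(4) of the decomposition. The only difference is that you spell out explicitly (and re-derive) estimates that the paper's short combined proof of Theorems \ref{invthem} and \ref{restinv} leaves implicit.
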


This theorem shows in particular that if $\mathfrak{A}$ is the set of elementary abelian $p$-groups then the nilspace used in the regularization is a finite nilspace such that all the structure groups are elementary abelian.
More generally if $\mathfrak{A}$ is the set of abelian groups in which the order of every element divides a fixed number $n$ (called groups of exponent $n$) then $\mathfrak{A}$-nilspaces (used in the regularization) are finite and all the structure groups have exponent $n$. 

In the $0$ characteristic case $\mathfrak{A}$-nilspaces are $k$-step nilmanifold with a given filtration. This will help us to give a generalization of the Green-Tao-Ziegler theorem \cite{GTZ} for a multidimensional setting.
In the case of the circle group, again we only get $k$-step nilmanifolds.

We will see that our methods give an even stronger from of theorem \ref{restreg} which seems to be helpful in the bounded exponent case but to formulate this we need to use ultra products more deeply.
Before doing so we first highlight our results about counting and limit objects for function sequences.

\bigskip

Roughly speaking, counting deals with the density of given configurations in functions on compact abelian groups. We have two goals with counting. One is to show that our regularity lemma is well behaved with respect to counting and the second goal is
to show that function sequences in which the density of every fixed configuration converges have a nice limit object which is a measurable function on a nilspace. This fits well into the recently developed graph and hypergraph limit theories \cite{LSz1},\cite{BCLSV1},\cite{LSz3},\cite{LSz4},\cite{ESz}.
 
Counting in compact abelian groups has two different looking but equivalent interpretations. One is about evaluating certain integrals and the other is about the distribution of random samples from a function.
Let $f:A\rightarrow\mathbb{C}$ be a bounded function and the compact group $A$.
An integral of the form 
$$\int_{x,y,z\in A} f(x+y)f(x+z)f(y+z)~d\mu^3$$
can be interpreted as the triangle density in the weighted graph $M_{x,y}=f(x+y)$.
Based on this connection, evaluating such integrals can be called counting in $f$.
Note that one might be interested in more complicated integrals like this:
$$\int_{x,y,z} f(x)\overline{f(z)}^5f(x+y+z)\overline{f(x+y)}f(y+z)^2~d\mu^3$$
where conjugations and various powers appear. 
It is clear that as long as the arguments are sums of different independent variables then all the above integrals can be obtained from knowing the seven dimensional distribution of
\begin{equation}\label{jointdis}
(f(x),f(y),f(z),f(x+y),f(x+z),f(y+z),f(x+y+z))\in\mathbb{C}^7
\end{equation}
where $x,y,z$ are randomly chosen elements form $A$ with respect to the Haar measure.
One can think of the above integrals as multi dimensional moments of the distribution in (\ref{jointdis}).
We will say that such a moment (or the integral itself) is simple if it does not contain higher powers. (We allow conjugation in simple moments.)
We will see that there is a slight, technical difference between dealing with simple moments and dealing with general moments.

Every moment can be represented as a colored (or weighted) hypergraph on the vertex set $\{1,2,\dots,n\}$ where $n$ is the number of variables and an edge $S\subseteq\{1,2,\dots,n\}$ represents the term $f(\sum_{i\in S}x_i)$ in the product.
The color of an edge tells the appropriate power and conjugation for the corresponding term.
The degree of a moment is the maximal size of an edge minus one in this hypergraph.
Let $\mathcal{M}$ denote the set of all simple moments and let $\mathcal{M}_k$ denote the collection of simple moments of degree at most $k$. 
We will denote by $D_n(f)$ the joint distribution of $\{f(\sum_{i\in S} x_i)\}_{S\subset[n]}$ where $[n]=\{1,2,\dots,n\}$.
It is a crucial fact that all the moments and the distributions $D_n$ can also be evaluated for functions on compact nilspaces with a distinguished element $0$. 
We call nilspaces with such an element ``rooted nilspaces''.
Let $N$ be a rooted nilspace. If we choose a random $n$-dimensional cube $c:\{0,1\}^n\rightarrow N$ in $C^n(N)$ with $f(0^n)=0$ then the joint distribution of the values $\{f(c(v))\}_{v\in\{0,1\}^n}$ gives the distribution $D_n(f)$. 
Using this we will prove the next theorem.

\begin{theorem}[Limit object I.]\label{simplim} Assume that $\{f_i\}_{i=1}^\infty$ is a sequence of uniformly bounded measurable functions on the compact abelian groups $\{A_i\}_{i=1}^\infty$. Then if $\lim_{i\to\infty} M(f_i)$ exists for every $M\in\mathcal{M}$ then there is a measurable function (limit object) $g:N\rightarrow\mathbb{C}$ on a compact rooted nilspace $N$ such that $M(g)=\lim_{i\to\infty}M(f_i)$ for $M\in\mathcal{M}$.
\end{theorem}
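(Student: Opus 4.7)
My plan is to exploit the paper's underlying framework of ultraproducts together with topological nilspace factors. Let $\mathbf{A}=\prod_\omega A_i$ be the ultraproduct equipped with its Loeb probability measure (pushed down from the normalized Haar measures), and let $\mathbf{f}:\mathbf{A}\to\mathbb{C}$ be the ultralimit of the sequence $(f_i)$, viewed as a function on the rooted group $\mathbf{A}$ with root $0$. Standard ultraproduct properties together with the assumed existence of $\lim_i M(f_i)$ give $M(\mathbf{f})=\lim_{i\to\infty}M(f_i)$ for every $M\in\mathcal{M}$, when the left-hand side is interpreted via Loeb integration. It therefore suffices to produce a rooted compact nilspace $N$ and a measurable $g:N\to\mathbb{C}$ whose simple moments match those of $\mathbf{f}$.

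For each $k\geq 1$, I would invoke the topological nilspace factor machinery built in the present paper on top of \cite{NP}, which supplies a continuous surjective morphism $\pi_k:\mathbf{A}\to N_k$ onto a compact rooted $k$-step nilspace $N_k$, with the property that the pullback $\sigma$-algebra $\mathcal{F}_k=\pi_k^{-1}(\mathcal{B}(N_k))$ coincides with the $U_{k+1}$-characteristic factor of $\mathbf{A}$. Set $g_k=\mathbb{E}(\mathbf{f}\mid\mathcal{F}_k)$; this function is $\mathcal{F}_k$-measurable and so descends to a Borel $g_k:N_k\to\mathbb{C}$ bounded by the common bound on the $f_i$. The nested factors form an inverse system, so $(g_k)$ is a uniformly bounded martingale and converges a.e. and in $L^2$ to a function $g$ living on the rooted compact nilspace $N$ obtained as the inverse limit of the $N_k$.

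The heart of the argument is the claim that every $M\in\mathcal{M}_k$ satisfies $M(\mathbf{f})=M(g_k)$. Writing $h=\mathbf{f}-g_k$, by construction $\|h\|_{U_{k+1}}=0$. Expanding $M(\mathbf{f})-M(g_k)$ as a telescoping sum indexed by the edges of the hypergraph encoding $M$, each summand is a Gowers-type inner product containing at least one copy of $h$ at some vertex; the Gowers--Cauchy--Schwarz inequality (applicable precisely because $M$ is simple, so no vertex carries a higher power, and the missing vertices of the ambient cube can be filled in with the constant function $1$) then bounds every such term by a constant multiple of $\|h\|_{U_{k+1}}=0$. Combined with the fact that pushing a Haar-random cube $c\in C^n(\mathbf{A})$ through $\pi_k$ yields a Haar-random cube in $C^n(N_k)$, this gives $M(\mathbf{f})=M(g_k)$ when the right-hand side is computed on the nilspace $N_k$. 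Since every $M\in\mathcal{M}$ has finite degree, letting $k\to\infty$ and applying martingale convergence yields $M(g)=\lim_iM(f_i)$ on the inverse limit nilspace $N$.

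The main obstacle, and the place where the deepest input is needed, is the identification of the $U_{k+1}$-characteristic factor of the ultraproduct $\mathbf{A}$ with the pullback of a genuine compact rooted $k$-step nilspace, together with the matching of the cube distributions on $\mathbf{A}$ with the Haar measures on $C^n(N_k)$ inherited from the nilspace structure. Once this topological nilspace factor theory (which underlies the whole paper) is in place, the limit object is obtained from a fairly standard Gowers--Cauchy--Schwarz plus martingale convergence argument that is closely analogous to the construction of graphon and hypergraphon limits.
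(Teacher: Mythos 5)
Your proposal follows the paper's own proof essentially step for step: pass to the ultraproduct and the ultralimit function, project onto the $\sigma$-algebras $\mathcal{F}_k$ (which preserves the simple moments by the Gowers--Cauchy--Schwarz/corner-convolution estimates of Lemma \ref{cornineq} and Corollary \ref{cornineqcor}), realize these projections on a nested inverse system of compact nilspace factors, and take the inverse limit. The one inaccuracy is your claim that all of $\mathcal{F}_k$ equals $\pi_k^{-1}(\mathcal{B}(N_k))$ for a single factor --- $\mathcal{F}_k$ is non-separable, so the paper instead embeds only the separable $\sigma$-algebra generated by each conditional expectation $g_k$ (together with the previously constructed factor, to keep the system nested) into a nil-$\sigma$-algebra and realizes that as a character-preserving nilspace factor via Lemma \ref{sepnil} and Theorem \ref{nilnil}; this is a harmless repair, since the separable sub-$\sigma$-algebra is all your argument actually uses.
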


\begin{corollary}[Limit object II.] Let $k$ be a fixed natural number. Assume that $\{f_i\}_{i=1}^\infty$ is a sequence of uniformly bounded measurable functions on the compact abelian groups $\{A_i\}_{i=1}^\infty$. Then if $\lim_{i\to\infty} M(f_i)$ exists for every $M\in\mathcal{M}_k$ then there is a measurable function (limit object) $g:N\rightarrow\mathbb{C}$ on a compact $k$-step rooted nilspace $N$ such that $M(g)=\lim_{i\to\infty}M(f_i)$ for $M\in\mathcal{M}_k$.
\end{corollary}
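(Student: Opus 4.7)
The plan is to deduce the corollary from Theorem~\ref{simplim} by a diagonal extraction followed by truncation to the $k$-step factor.

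First, $\mathcal{M}$ is countable (each simple moment is encoded by a finite coloured hypergraph) and each sequence $\{M(f_i)\}_i$ is uniformly bounded, so a standard diagonal argument produces a subsequence $\{f_{i_j}\}$ for which $\lim_j M(f_{i_j})$ exists for every $M\in\mathcal{M}$. For any $M\in\mathcal{M}_k$ this subsequential limit coincides with the hypothesised $\lim_i M(f_i)$. Applying Theorem~\ref{simplim} to $\{f_{i_j}\}$ produces a compact rooted nilspace $\widetilde{N}$ and a measurable $h:\widetilde{N}\to\mathbb{C}$ with $M(h)=\lim_i M(f_i)$ for every $M\in\mathcal{M}_k$.

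By the structure theory of \cite{NP}, $\widetilde{N}$ admits a canonical projection $\pi:\widetilde{N}\to N$ onto its $k$-step factor $N$, itself a compact $k$-step rooted nilspace; define $g:N\to\mathbb{C}$ as the fibrewise Haar average of $h$ (equivalently, the conditional expectation $\mathbb{E}(h\mid\pi)$ with respect to the disintegration of Haar measure). It remains to show $M(g)=M(h)$ for every $M\in\mathcal{M}_k$. Writing $M(h)=\int\prod_S h(c(v_S))^{\epsilon_S}\,d\mu(c)$ over random rooted cubes $c\in C^n(\widetilde{N})$ with every $v_S$ of Hamming weight $\leq k+1$, the proof reduces to the following claim: the joint law of $\{c(v):|v|\leq k+1\}$ disintegrates as the law of $\{\pi(c(v)):|v|\leq k+1\}$ (a random rooted cube in $N$ restricted to its low-weight vertices) together with \emph{independent} uniform lifts of each of these points inside the corresponding $\pi$-fibre. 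Granted this claim, \emph{simplicity} of $M$ (each exponent being trivial up to conjugation) ensures that the fibrewise Haar average of each factor $h(c(v_S))^{\epsilon_S}$ is exactly $g(\pi(c(v_S)))^{\epsilon_S}$, and the identity $M(h)=M(g)$ follows by Fubini.

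The main obstacle is this fibrewise-independence claim, which is where the actual work lies. I would extract it from the iterated abelian-bundle description of compact nilspaces in \cite{NP}: the fibres of $\pi$ are successive extensions by the higher structure groups $A_{k+1},A_{k+2},\dots$, and an induction on the number of such levels, using the $(k+1)$-step uniqueness axiom at each stage, shows that vertices of Hamming weight $\leq k+1$ lie below the rigidity threshold of every higher structure group and can therefore be independently twisted inside its fibres without destroying the cube condition. This is precisely the degree-$\leq k$ information retained by the $k$-step factor, and everything else in the argument is a routine reduction.
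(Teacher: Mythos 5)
Your route is viable but genuinely different from the paper's, and as written it leaves the heaviest step unproved. The paper does not obtain Limit object II from Theorem~\ref{simplim} by truncating on the nilspace side; it truncates on the ultraproduct side. One takes the ultralimit $f$ on $\bA=\prod_\omega A_i$, replaces it by $\mathbb{E}(f|\mathcal{F}_k)$ -- which leaves every moment in $\mathcal{M}_k$ unchanged by Corollary~\ref{cornineqcor} -- embeds the separable $\sigma$-algebra generated by this projection into a degree-$k$ nil-$\sigma$-algebra (Lemma~\ref{sepnil}), realizes it by a character preserving, hence rooted measure preserving, $k$-step nilspace factor $\Psi:\bA\rightarrow N$ (Theorem~\ref{nilnil}), and reads off $g$ on $N$. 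This outputs the $k$-step nilspace directly and never needs any disintegration of cube measures over a factor. Your plan instead uses Theorem~\ref{simplim} as a black box (the diagonal extraction and the identification of the subsequential limits with the hypothesised limits are fine) and pushes the work into a purely nilspace-theoretic statement: that conditioned on the projection to the $k$-step factor, the values of a random rooted cube at the vertices of Hamming weight at most $k+1$ are independent and uniform in the fibres. What your approach buys is independence from the ultraproduct machinery beyond the quoted theorem; what it costs is exactly this lemma, which is nowhere in the paper.

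Two concrete points must be supplied before your argument is complete. First, the statement of Theorem~\ref{simplim} only gives a compact rooted nilspace $\widetilde N$ with no step bound, while the structure theory you invoke from \cite{NP} (canonical $k$-step factor, structure groups $A_{k+1},A_{k+2},\dots$, measure preservation of the induced maps on rooted cube spaces, the disintegration defining $\mathbb{E}(h|\pi)$) is stated there for $k$-step compact nilspaces; you would either have to prove these facts for general compact nilspaces or use the fact (visible only from the construction behind Theorem~\ref{simplim}, not from its statement) that $\widetilde N$ is an inverse limit of finite-step compact factors, and then pass the disintegration through the inverse limit. Second, your justification of the independence claim via ``the $(k+1)$-step uniqueness axiom'' is not the right mechanism: uniqueness of corner completion says nothing about freeness at low-weight vertices. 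The correct reason is the explicit description of $C^n_0(\mathcal{D}_j(A))$ for $j\geq k+1$ (equivalently the generation result for $Z_{n,j}$ from \cite{Sz3} used in this paper): its elements are exactly the $A$-valued polynomials of degree at most $j$ vanishing at $0^n$, and the triangular (M\"obius) correspondence between coefficients indexed by sets of size at most $k+1$ and values at vertices of weight at most $k+1$ shows these values are unconstrained and Haar-independent. With that in hand your induction over the structure-group levels, the measure preservation of the cube projections, and the simplicity of the moments (each vertex occurring to the first power, possibly conjugated) do give $M(g)=M(h)$ for $M\in\mathcal{M}_k$, so the plan can be completed; but filling these two gaps amounts to reproving a piece of structure theory that the paper's own one-line truncation at $\mathcal{F}_k$ avoids entirely.
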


Let $\mathcal{P}_r$ denote the space of Borel probability distributions supported on the set $\{x:|x|\leq r\}$ in $\mathbb{C}$.

\begin{theorem}[Limit object III.]\label{genlim} Assume that $\{f_i\}_{i=1}^\infty$ is a sequence of functions with $|f_i|\leq r$ on the compact abelian groups $\{A_i\}_{i=1}^\infty$. Then if $\lim_{i\to\infty} D_k(f_i)$ exists for every $k\in\mathbb{N}$ then there is a measurable function (limit object) $g:N\rightarrow\mathcal{P}_r$ on a compact rooted nilspace $N$ such that $D_k(g)=\lim_{i\to\infty}D_k(f_i)$ for $k\in\mathbb{N}$.
\end{theorem}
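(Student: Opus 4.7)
\emph{Proof proposal.} The plan is to mirror the proof of Theorem~\ref{simplim} but to replace the scalar ultra-limit $\ff$ by its regular conditional distribution, so that the limit object will land in $\mathcal{P}_r$ rather than $\mathbb{C}$ and will capture all moments, not just the simple ones. Throughout, $\bA = \prod_\omega A_i$ denotes the ultra product compact abelian group with its Loeb measure, and $\ff = [f_i]_\omega : \bA \to \{z\in\mathbb{C}:|z|\le r\}$ is the ultra-limit. Because the joint distributions $D_k$ are continuous in the appropriate weak-$*$ sense against bounded continuous test functions, $D_k(\ff) = \lim_{i\to\infty} D_k(f_i)$ for every $k$, so it suffices to realise $\ff$ as a $\mathcal{P}_r$-valued function on a suitable compact rooted nilspace factor of $\bA$.

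First I would fix a countable sup-norm dense family $\{F_j\}_{j\in\mathbb{N}}$ in $C(\{|z|\le r\})$ and apply the ultra product construction underlying Theorem~\ref{simplim} to each bounded function $F_j(\ff):\bA\to\mathbb{C}$. This yields continuous nilspace morphisms $\pi_j:\bA\to N_j$, each capturing $F_j(\ff)$ in the sense that $F_j(\ff)$ coincides (in $L^\infty$) with the pullback of a measurable function on $N_j$. Using the inverse-limit description of compact nilspaces from \cite{NP}, these factors admit a common refinement $\pi:\bA\to N$ that is itself a compact rooted nilspace, and by construction $\sigma(\ff)\subseteq\pi^{-1}\mathcal{B}(N)$.

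Next I would define $g:N\to\mathcal{P}_r$ by disintegration: for each $F\in C(\{|z|\le r\})$, the conditional expectation $\mathbb{E}(F(\ff)\mid\pi)$ descends to a function $\tilde{F}\in L^\infty(N)$, and for almost every $y\in N$ the positive linear functional $F\mapsto\tilde{F}(y)$ represents a probability measure $g(y)\in\mathcal{P}_r$ by the Riesz representation theorem; measurability of $g$ follows from that of each $\tilde{F}_j$. The crux is then $D_k(g)=D_k(\ff)$, which reduces to a conditional independence statement: for a $\mu$-random cube $c\in C^k(\bA)$, (i)~$\pi\circ c$ is distributed as the Haar cube measure on $N$, and (ii)~conditionally on $\pi\circ c$, the values $\{\ff(c(v))\}_{v\in\{0,1\}^k}$ are mutually independent with $\ff(c(v))\sim g(\pi(c(v)))$. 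Item (i) follows from $\pi$ being a continuous morphism that is fully balanced at the ultra product level; item (ii) reduces, by a monotone class argument over the $F_j$, to the simple-moment identity
\[
\int_{C^k(\bA)} \prod_{v}F_{v}(\ff(c(v)))\, d\mu_k(c) = \int_{C^k(N)} \prod_{v}\tilde{F}_{v}(c'(v))\, d\mu_k(c'),
\]
which is precisely the cube-integral content of Theorem~\ref{simplim} applied to the joint system $\{F_j(\ff)\}$ on $\bA$ and $\{\tilde{F}_j\}$ on $N$.

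The main obstacle is the construction of the common refinement $\pi:\bA\to N$ and the verification that it inherits the full nilspace axioms --- ergodicity and gluing --- rather than being merely a cubespace. This is where the structure theory from \cite{NP} is essential: each $N_j$ is an inverse limit of finite-dimensional $k$-step nilspaces with compact abelian structure groups, and one must check that taking the inverse limit over the countable family $\{N_j\}$ again produces a compact nilspace (possibly of infinite step, i.e., itself an inverse limit of $k$-step nilspaces as $k\to\infty$). A secondary but routine obstacle is the measurable selection of the disintegration $g$, which is handled by standard arguments on Polish spaces once the nilspace factor has been identified.
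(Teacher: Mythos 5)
Your overall strategy is the paper's own: the paper proves Theorem \ref{genlim} by rerunning the proof of Theorem \ref{simplim} simultaneously on the countable family $f^a\overline{f^b}$, $a,b\in\mathbb{N}$, projecting each onto $\mathcal{F}=\bigvee_k\mathcal{F}_k$, realising all these projections on a single nilspace factor (an inverse limit of the factors supplied by Theorem \ref{nilnil}), and reading off the measure $g(y)\in\mathcal{P}_r$ at almost every point from the resulting moment array; your countable sup-norm dense family $\{F_j\}$ together with Riesz representation is the same construction with Stone--Weierstrass in place of the moment problem, and your reduction of $D_k(g)=D_k(\ff)$ to the displayed cube-integral identity is the correct reduction.

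However, two statements in your write-up are false as written, and they sit at the conceptual centre of the theorem. It is not true that ``$F_j(\ff)$ coincides in $L^\infty$ with the pullback of a measurable function on $N_j$'', nor that $\sigma(\ff)\subseteq\pi^{-1}\mathcal{B}(N)$: every $k$-step nilspace factor is measurable in $\mathcal{F}_k$, while $F_j(\ff)$ generically has a nonzero component whose $U_{k+1}$ norms all vanish, which no factor can see. Indeed, if $\sigma(\ff)$ were contained in $\sigma(\pi)$, your disintegration $g$ would be a Dirac mass almost everywhere and Theorem \ref{genlim} would collapse to a scalar limit object, which already fails for quasirandom $\pm1$-valued sequences --- this is exactly why the limit object must be $\mathcal{P}_r$-valued. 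What the construction behind Theorem \ref{simplim} actually delivers, and what you must feed into the common refinement, is that the projections $\mathbb{E}(F_j(\ff)\mid\mathcal{F}_m)$ (for all $j,m$) are measurable in the factor. Correspondingly, your key identity is not literally ``Theorem \ref{simplim} applied to the joint system'' (that theorem concerns a single function and its simple moments); the actual mechanism is Corollary \ref{cornineqcor}, which lets you replace each vertex function $F_v(\ff)$ by its conditional expectation onto $\mathcal{F}_{k-1}$ without changing the corner convolution, combined with the rooted measure preserving property of the character preserving factor (Theorem \ref{charpres}) to transport the cube integral to $C^k(N)$ --- note also that $D_k$ is defined via rooted cubes, so it is the rooted version you need. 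Once the factor is built over the projections rather than over $F_j(\ff)$ itself, these two ingredients yield your displayed identity and the argument closes; the inverse-limit point you single out as the main obstacle is handled exactly as in the proof of Theorem \ref{simplim} and is not where the difficulty lies.
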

Let us observe that theorem \ref{genlim} implies the other two.

We devote the last part of the introduction to our main method and the simple to state theorem \ref{ultreg} on ultra product groups which implies almost everything in this paper.
Let $\{A_i\}_{i=1}^\infty$ be a sequence of compact abelian groups and let $\bA$ be their ultra product.
Our strategy is to develop a theory for the Gowers norms on $\bA$ and then by indirect arguments we translate it back to compact groups.
First of all note that $U_{k+1}$ is only a semi norm on $\bA$. It was proved in \cite{Sz1} that there is a (unique) maximal $\sigma$-algebra $\mathcal{F}_k$ on $\bA$ such that $U_{k+1}$ is a norm on $L^\infty(\mathcal{F}_k)$. 
It follows that every function $f\in L^\infty(\bA)$ has a unique decomposition as $f_s+f_r$ where $\|f_r\|_{U_{k+1}}=0$ and $f_s$ is measurable in $\mathcal{F}_k$. 
This shows that on the ultra product $\bA$ it is simple to separate the structured part of $f$ from the random part. 

The question remains how to describe the structured part in a meaningful way.
It turns out that to understand this we need to go beyond measure theory and use topology. 
Note that the reason for this is not that the groups $A_i$ are already topological. Even if $\{A_i\}_{i=1}^\infty$ is a sequence of finite groups, topology will come into the picture in the same way.  

We will make use of the fact that $\bA$ has a natural $\sigma$-topology on it. A $\sigma$-topology is a weakening of ordinary topology where only countable unions of open sets are required to be open.
The structure of $\mathcal{F}_1$, which is tied to ordinary Fourier analysis, sheds light on how topology comes into the picture. It turns out that $\mathcal{F}_1$ can be characterized as the smallest $\sigma$-algebra in which all the continuous surjective homomorphisms $\phi:\bA\rightarrow G$ are measurable where $G$ is a compact Hausdorff abelian group.
In other words the ordinary topological space $G$ appears as a factor of the $\sigma$-topology on $\bA$.
The next theorem explains how nilspaces enter the whole topic:

\begin{theorem}[Characterization of $\mathcal{F}_{k+1}$.] The $\sigma$-algebra $\mathcal{F}_k$ is the smallest $\sigma$-algebra in which all continuous morphisms $\phi:\bA\rightarrow N$ are measurable where $N$ is a compact $k$-step nilspace.
\end{theorem}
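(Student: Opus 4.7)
The plan is to establish both inclusions between $\mathcal{F}_k$ and the $\sigma$-algebra generated by continuous morphisms from $\bA$ to compact $k$-step nilspaces.

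For the easy inclusion, I would argue that $g\circ\phi\in L^\infty(\mathcal{F}_k)$ whenever $\phi:\bA\to N$ is a continuous morphism into a compact $k$-step nilspace and $g:N\to\mathbb{C}$ is continuous. Since $\mathcal{F}_k$ is the maximal $\sigma$-algebra on which $U_{k+1}$ is a norm, and since morphisms push $(k+1)$-cubes in $\bA$ forward to $(k+1)$-cubes in $N$, the computation of $\|g\circ\phi\|_{U_{k+1}}$ reduces to a Gowers-type integral over $C^{k+1}(N)$ against the pushforward measure. In a $k$-step nilspace the gluing axiom forces a \emph{unique} extension at dimension $k+1$, which rigidifies this integral so that $g\circ\phi$ must be orthogonal (via the Gowers--Cauchy--Schwarz inequality) to every bounded $r$ with $\|r\|_{U_{k+1}}=0$. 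By the maximality characterisation of $\mathcal{F}_k$ this places $g\circ\phi$ in $L^\infty(\mathcal{F}_k)$; letting $g$ range over a uniformly dense family of continuous functions on $N$ then yields $\mathcal{F}_k$-measurability of $\phi$ itself.

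For the deep direction, that $\mathcal{F}_k$ is contained in the $\sigma$-algebra generated by such morphisms, I would rely on the topological nilspace factor theory developed in the body of this paper together with the structural results of \cite{NP}. The strategy is to associate to any bounded $\mathcal{F}_k$-measurable $f$ a tower of continuous morphisms $\phi_i:\bA\to N_i$ into finite dimensional compact $k$-step nilspaces with continuous $g_i:N_i\to\mathbb{C}$ such that $g_i\circ\phi_i$ converges to $f$ in $L^2$. These factors are built by adjoining one structure group at a time and are therefore compatible, so they assemble into a single continuous morphism $\phi:\bA\to N$ with $N$ the inverse limit; by the main structure theorem in \cite{NP} the inverse limit $N$ is again a compact $k$-step nilspace. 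A monotone class argument then promotes the statement from bounded measurable functions to the full $\sigma$-algebra $\mathcal{F}_k$.

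The principal obstacle is the deep direction and, within it, the verification that the factors canonically attached to $\mathcal{F}_k$ satisfy not only the composition axiom (which would give a cubespace) but also the ergodicity and unique-completion gluing axiom at level $k+1$ needed to promote the target to a genuine $k$-step nilspace. This is exactly the content of the topological nilspace factor theorem developed in the paper, and it is the ultraproduct counterpart of the Host--Kra construction of characteristic factors. By contrast, continuity of the assembled morphism with respect to the $\sigma$-topology on $\bA$ comes essentially for free, since each $\phi_i$ is built out of continuous surjective homomorphisms and continuous compact abelian bundle projections supplied by \cite{NP}.
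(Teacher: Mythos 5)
Your overall architecture (two inclusions; unique completion at dimension $k+1$ for the easy one; finite-dimensional factors plus inverse limits for the hard one) matches the paper, and your easy direction is recoverable: since $N$ is $k$-step, $\phi(c(0^{k+1}))$ is a Borel function of $\{\phi(c(v))\}_{v\neq 0^{k+1}}$ for $c\in C^{k+1}(\bA)$, and one can either quote the cube description (\ref{fk}) of $\mathcal{F}_k$ directly (as the paper does) or run your orthogonality argument --- but note that Gowers--Cauchy--Schwarz does not apply to the completion function as it stands, since that function depends jointly on all $2^{k+1}-1$ vertices; you must first approximate it by finite sums of products of one-vertex functions (Stone--Weierstrass) and then invoke the corner-convolution estimate (corollary \ref{cornineqcor}), so your phrase ``rigidifies this integral'' is hiding a genuine step.

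The deep direction, however, is not proved but deferred: saying that the needed verification ``is exactly the content of the topological nilspace factor theorem developed in the paper'' is circular in this setting, because that theorem (theorem \ref{nilnil} combined with lemma \ref{sepnil}) \emph{is} the proof of the statement under review. Your proposal supplies no mechanism for manufacturing the morphisms $\phi_i$: nothing plays the role of the decomposition of $L^2(\bA,\mathcal{F}_k)$ into rank-one modules over $L^\infty(\bA,\mathcal{F}_{k-1})$ and the choice of $k$-th order characters, of the nil-$\sigma$-algebra condition $\chi_\phi\in[\mathcal{B}_{k-1},k]^*$, of the correction of $(\chi_\phi)^\diamond$ to an exact cocycle on $C^{k+1}(N_{k-1})$ via the three-cube $T_{k+1}$, of the extension construction through proposition \ref{meascont}, or of the equivalence of character preservation with (rooted) measure preservation (theorem \ref{charpres}), which is what certifies that the constructed map is a genuine nilspace factor whose $\sigma$-algebra exhausts the given separable piece of $\mathcal{F}_k$. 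Finally, your claim that continuity of the assembled morphism ``comes essentially for free'' is wrong in an instructive way: the $k$-th order characters out of which the factor is built are merely measurable, not continuous, and continuity of $\Psi$ is obtained only because $\Psi$ composed with the topology-generating functions of lemma \ref{topgen} coincides with corner convolutions $\mathcal{K}_{k+1}(F)$, which are continuous on $\bA$; this averaging step is one of the places where the real work happens, not a formality inherited from \cite{NP}.
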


Another, stronger formulation of the previous theorem says that every separable $\sigma$-algebra in $\mathcal{F}_k$ is measurable in a  $k$-step compact, Hausdorff nilspace factor of $\bA$.
We will see later that we can also require a very strong measure preserving property for the nilspace factors $\phi:\bA\rightarrow N$. This will also be crucial in the proofs.
As a corollary we have a very simple regularity lemma on the ultra product group $\bA$.

\begin{theorem}[Ultra product regularity lemma]\label{ultreg} Let us fix a natural number $k$. Let $f\in L^\infty(\bA)$ be a function. Then there is a unique (orthogonal) decomposition $f=f_s+f_r$ such that $\|f_r\|_{U_{k+1}}=0$ and $f_s$ is measurable in a $k$-step compact nilspace factor of $\bA$.
\end{theorem}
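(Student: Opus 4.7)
I will derive Theorem~\ref{ultreg} directly from two inputs already stated above: the measure-theoretic splitting from \cite{Sz1} attached to the $\sigma$-algebra $\mathcal{F}_k$, and the Characterization Theorem identifying $\mathcal{F}_k$ with the $\sigma$-algebra generated by continuous morphisms into compact $k$-step nilspaces. The core content sits in those two results; the remaining task is to combine them and to upgrade ``$f_s$ is $\mathcal{F}_k$-measurable'' to ``$f_s$ is measurable in one single compact $k$-step nilspace factor''.

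\textbf{Step 1: the abstract orthogonal decomposition.} Set $f_s := \mathbb{E}(f\mid\mathcal{F}_k)$ and $f_r := f - f_s$. Orthogonality $\langle f_s, f_r\rangle = 0$ in $L^2(\bA)$ is automatic from the defining property of conditional expectation. The identity $\|f_r\|_{U_{k+1}} = 0$ follows from the standard dual function argument underlying the definition of $\mathcal{F}_k$ in \cite{Sz1}: one has $\|h\|_{U_{k+1}}^{2^{k+1}} = \langle h, \mathcal{D}_{k+1}h\rangle$ with $\mathcal{D}_{k+1}h$ always $\mathcal{F}_k$-measurable, so any $h\in L^2(\bA)$ orthogonal to $L^2(\mathcal{F}_k)$ has vanishing $U_{k+1}$ seminorm. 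Uniqueness is immediate: if $f = f_s' + f_r'$ is another such decomposition then $f_s - f_s'$ is $\mathcal{F}_k$-measurable with vanishing $U_{k+1}$ seminorm, hence zero because $U_{k+1}$ is a genuine norm on $L^\infty(\mathcal{F}_k)$.

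\textbf{Step 2: realization on a single nilspace factor.} Since $f_s$ is bounded and $\mathcal{F}_k$-measurable, the sub-$\sigma$-algebra $\sigma(f_s)$ it generates is countably generated (take preimages of a countable base for the topology on $\mathbb{C}$). By the Characterization Theorem, each such generator agrees modulo null sets with the pullback of a Borel set under some continuous morphism $\phi_i:\bA\to N_i$ into a compact $k$-step nilspace $N_i$. Assemble these into the diagonal map $\Phi := (\phi_i)_{i\in\mathbb{N}} : \bA \to N$, where $N$ is the inverse limit of the sequence $(N_i)$ in the category of compact Hausdorff $k$-step nilspaces. By the structure results of \cite{NP}, countable inverse limits of compact $k$-step nilspaces remain compact $k$-step nilspaces, so $\Phi$ is a continuous morphism into such an $N$, and every generator of $\sigma(f_s)$ is $\Phi$-measurable. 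Therefore $f_s$ is measurable in the compact $k$-step nilspace factor $\Phi:\bA\to N$, as required.

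\textbf{Main obstacle.} Once the Characterization Theorem is available, the proof here is a formality, and its only genuinely intrinsic ingredient is the closure of compact $k$-step nilspaces under countable inverse limits, supplied by \cite{NP}. The real difficulty is pushed into the Characterization Theorem itself, whose proof must construct honest continuous nilspace morphisms $\bA\to N$ out of abstract $\mathcal{F}_k$-measurability; that is where the topological nilspace machinery of \cite{NP} and the ultraproduct-analytic framework of \cite{Sz1},\cite{Sz2} are doing the heavy lifting.
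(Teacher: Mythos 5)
Your Step 1 is fine and coincides with the paper's: $f_s=\mathbb{E}(f|\mathcal{F}_k)$, $f_r=f-f_s$, vanishing of $\|f_r\|_{U_{k+1}}$ from \cite{Sz1}, and uniqueness (note you implicitly use the fact, stated in the paper, that any continuous morphism of $\bA$ into a compact $k$-step nilspace is automatically $\mathcal{F}_k$-measurable, so that $f_s'$ is $\mathcal{F}_k$-measurable). The genuine gap is in Step 2. First, the object you build is not defined: the nilspaces $N_i$ come with no connecting morphisms, so ``the inverse limit of the sequence $(N_i)$'' does not exist; what you presumably mean is the countable product (inverse limit of finite partial products). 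More seriously, even granting that, the diagonal map $\Phi=(\phi_i)$ is only a continuous morphism into a compact $k$-step nilspace, not a \emph{nilspace factor} of $\bA$ in the sense the theorem uses: the paper requires the induced maps $C^n(\bA)\to C^n(N)$ to be surjective, equivalently that the image of $\bA$ with the \emph{induced} cubespace structure is itself a $k$-step nilspace. A diagonal map is essentially never surjective onto the product, and for the image cubespace the gluing/unique-completion axioms are not automatic (a corner in the image need not lift to a coherent corner in $\bA$, since the individual face preimages need not agree). This factor property is not cosmetic: it is what makes the factor measure preserving (Theorem \ref{charpres}) and is what the paper's later regularization argument actually consumes.

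Second, the input you lean on is not independent of the statement you are proving. The ``Characterization Theorem'' quoted in the introduction is, in the body of the paper, nothing but a repackaging of Lemma \ref{sepnil} together with Theorem \ref{nilnil}; its \emph{stronger} formulation --- every separable sub-$\sigma$-algebra of $\mathcal{F}_k$ is measurable in a single compact $k$-step nilspace factor --- is precisely what Theorem \ref{ultreg} needs, and precisely what your product trick fails to recover from the weaker ``smallest $\sigma$-algebra generated by morphisms'' statement. The paper's actual route is: embed the separable $\sigma$-algebra $\sigma(f_s)\subset\mathcal{F}_k$ into a degree-$k$ nil-$\sigma$-algebra (Lemma \ref{sepnil}), then build, by induction on $k$ via measurable cocycles and abelian-bundle extensions (Proposition \ref{meascont}, Theorem \ref{nilnil}), a character preserving factor whose generated $\sigma$-algebra is exactly that nil-$\sigma$-algebra; the inverse limit taken there is over a genuinely directed system (finitely generated subgroups of the relevant dual group). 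So the ``formality'' you describe is in fact the main construction of the paper, and your argument as written does not supply it.
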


\subsection{A multidimensional generalization of the Green-Tao-Ziegler theorem.}\label{cyclic}

The following notion of a polynomial map between groups was introduced by Leibman \cite{Lei}.

\begin{definition}[Polynomial map between groups] A map $\phi$ of a group $G$ to a group $F$ is said to be polynomial of degree $k$ if it trivializes after $k+1$ consecutive applications of the operator $D_h,~h\in G$ defined by $D_h\phi(g)=\phi(g)^{-1}\phi(gh).$
\end{definition}

Our goal in this chapter is to relate nilspace polynomials on cyclic groups to Leibman type polynomials.
As a consequence we will obtain a new proof of the inverse theorem by Green, Tao and Ziegler for cyclic groups.
We will need a few definitions and lemmas. 

\begin{lemma}\label{polyab} Let $A$ be an abelian group. Then the set of at most degree $k$ polynomials form $\mathbb{Z}^n$ to $A$ is generated by the functions of the form
\begin{equation}\label{abpoly}
f(x_1,x_2,\dots,x_n)=a\prod_{i=1}^n{{x_i}\choose{n_i}}
\end{equation}
where $a\in A$ and $\sum_{i=1}^n n_i\leq k$.
(We use additive notation here)
\end{lemma}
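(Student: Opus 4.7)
The plan is to prove the lemma in two directions: first, that each function $a\prod_i\binom{x_i}{n_i}$ with $\sum n_i\le k$ has Leibman degree at most $k$, and second, that every degree-$\le k$ polynomial $f\colon\bZ^n\to A$ is a finite sum of such generators. The whole argument is driven by Newton's forward-difference calculus and the coordinate operators $\Delta_i:=D_{e_i}$.

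For the first direction I would argue by induction on $\sum n_i$. Using Vandermonde's identity $\binom{x+h}{m}=\sum_{j}\binom{x}{j}\binom{h}{m-j}$ one obtains
\[
D_h\Bigl(\prod_i\binom{x_i}{n_i}\Bigr)=\sum_{\vec j\le\vec n,\;\vec j\ne\vec n}\Bigl(\prod_i\binom{h_i}{n_i-j_i}\Bigr)\prod_i\binom{x_i}{j_i},
\]
so $D_h$ sends each generator of total degree $\sum n_i$ to a $\bZ$-linear combination of generators of strictly smaller total degree. By the induction hypothesis this combination has Leibman degree $\le\sum n_i-1$, and hence the generator itself has degree $\le\sum n_i\le k$. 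Multiplication by an element of $A$ commutes with $D_h$ via the $\bZ$-module structure of $A$, so the same conclusion holds after prefixing by $a$.

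For the second direction I would define the Newton coefficients
\[
c_{\vec m}:=(\Delta_1^{m_1}\cdots\Delta_n^{m_n}f)(0)\in A.
\]
Since $\Delta_1^{m_1}\cdots\Delta_n^{m_n}$ is a composition of $\sum m_i$ operators of the form $D_h$, the degree hypothesis on $f$ forces $c_{\vec m}=0$ whenever $\sum m_i>k$. Put
\[
g(x):=\sum_{\vec m:\;\sum m_i\le k}c_{\vec m}\prod_i\binom{x_i}{m_i},
\]
a finite sum of generators. The identities $\Delta_i\binom{x_i}{m_i}=\binom{x_i}{m_i-1}$ and $\binom{0}{m}=\delta_{m,0}$ yield $(\Delta_1^{m_1}\cdots\Delta_n^{m_n}g)(0)=c_{\vec m}$ for every $\vec m$ with $\sum m_i\le k$, while for $\sum m_i>k$ both sides vanish (the left by the first direction applied to $g$, the right by the degree of $f$). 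Hence every Newton coefficient of $h:=f-g$ is zero.

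The last step, which I expect to be the main obstacle, is to show that a polynomial $h\colon\bZ^n\to A$ of degree $\le k$ whose Newton coefficients all vanish must be identically zero on all of $\bZ^n$, not merely on $\bN^n$ where Newton's formula applies directly. I would establish this by induction on $k$: the case $k=0$ reduces to $h$ being a constant with $h(0)=c_{\vec 0}=0$. For the inductive step, $\Delta_i h$ has degree $\le k-1$ with Newton coefficients $c_{\vec m}(\Delta_i h)=c_{\vec m+e_i}(h)=0$, so the induction hypothesis forces $\Delta_i h\equiv 0$ for every $i$; the resulting translation-invariance plus $h(0)=0$ gives $h\equiv 0$. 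This closes the argument and expresses $f=g$ as a finite sum of generators of the required form.
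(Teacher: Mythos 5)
Your proof is correct, and it takes a genuinely different route from the one in the paper. The paper inducts on the degree $k$: it takes the top symmetric $k$-linear form $\omega(y_1,\dots,y_k)=D_{y_1}\cdots D_{y_k}\phi$, represents it by a homomorphism $\otimes^k(\mathbb{Z}^n)\rightarrow A$, realizes each elementary piece of that homomorphism by a single generator of the form (\ref{abpoly}) (with $n_i$ the multiplicity of the index $i$ among the tensor factors), and subtracts the resulting $\phi'$ so that the difference has trivial top form and hence degree $k-1$. You instead write down the full Newton expansion in one step, with coefficients $c_{\vec m}=(\Delta_1^{m_1}\cdots\Delta_n^{m_n}f)(0)$, and reduce everything to the statement that a degree-$\le k$ polynomial whose iterated differences all vanish at $0$ vanishes identically on $\mathbb{Z}^n$, which you prove by the clean induction giving $\Delta_i h\equiv 0$ for every $i$ together with $h(0)=0$. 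Your route buys two things: an explicit formula for the coefficients $a$ appearing in the decomposition, and an explicit proof of the converse containment (each generator has Leibman degree at most $\sum_i n_i$, via Vandermonde), which the paper disposes of with ``it is easy to see''. The paper's route is shorter on the surface but leans on representing symmetric multilinear forms by homomorphisms on the tensor power and decomposing those into elementary ones, which your argument avoids entirely. Both arguments implicitly use that degree-$\le k$ maps into an abelian group form a group under pointwise addition (so that differences of such maps can be handled inductively); your use of this fact is explicit and correct.
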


\begin{proof} We go by induction on $k$. The case $k=0$ is trivial. Assume that it is true for $k-1$. Let $g_1,g_2,\dots,g_n$ be the generators of $\mathbb{Z}^n$. If $\phi:\mathbb{Z}^n\rightarrow A$ is a polynomial map then $$\omega(y_1,y_2,\dots,y_k)=D_{y_1}D_{y_2}\dots D_{y_k}\phi$$ is a symmetric $k$-linear form on $\mathbb{Z}^n$. We claim that there is map $\phi'$ which is generated by the functions in (\ref{abpoly}) and whose $k$-linear form is equal to $\omega$. Let $f:\otimes^k(\mathbb{Z}^n)\rightarrow A$ be a homomorphism representing $\omega$. Then $f$ is generated by homomorphisms $h$ such that $h(g_{j_1}\otimes g_{j_2}\otimes\dots g_{j_k})=a$ for some indices $j_1,j_2,\dots,j_k$ (and any ordering of them) and take $0$ on any other tensor products of generators. It is enough to represent such an $h$ by a function of the form (\ref{abpoly}). 
It is easy to see that if $n_i$ is the multiplicity of $i$ among the indices $\{j_r\}_{r=1}^k$ then (\ref{abpoly}) gives a polynomial whose multi linear form is represented by $h$. 

The difference $\phi-\phi'$ has a trivial $k$-linear form which shows that it is a $k-1$ dimensional polynomial and then we use induction the generate $\phi-\phi'$.
\end{proof}

The next definition is by Leibman \cite{Lei2}.

\begin{definition} Let $F$ be a $k$-nilptent group with filtration $\mathcal{V}=\{F_i\}_{i=0}^k$ with $F=F_0$, $F_{i+1}\subseteq F_i$, $F_k=\{1\}$ and $[F_i,F]\subseteq F_{i+1}$ if $i<k$. A map $\phi:G\rightarrow F$ is a $\mathcal{V}$-polynomial if $\phi$ modulo $F_i$ is a polynomial of degree $i$.
\end{definition}

It is proved in \cite{Lei2} that $\mathcal{V}$ polynomials are closed under multiplication. 
Related to the filtration $\mathcal{V}$ (using the notation of the previous definition) we can also define a nilspace structure on $F$. A map $f:\{0,1\}^n\rightarrow F$ is in $C^n(F,\mathcal{V})$ if it can be obtained from the constant $1$ map in a finite process where in each step we choose a natural number $1\leq i\leq k$ and an element $x\in N_i$ and then we multiply the value of $f$ on an $i+1$ codimensional face of $\{0,1\}^n$ by $x$. 

If $H\subset F$ is a subgroup then there is an inherited nilspace structure on the left coset space $M$ of $H$ in $F$.
This is obtained by composing all the maps in $C^n(F)$ with $F\rightarrow M$.
Note that the structure groups of $M$ are the groups $A_i=F_iH/F_{i+1}H$. The fact that $\mathcal{V}$ is a filtration implies that $[F_iH,F_iH]\subseteq F_{i+1}H$ and so $A_i$ is an abelian group.

\begin{lemma}\label{felemelo} Let $\phi:\mathbb{Z}^n\rightarrow M$ be a morphism. Then there is a lift $\psi:\mathbb{Z}^n\rightarrow F$ such that $\psi$ is a $\mathcal{V}$-polynomial and $\psi$ composed with the projection $F\rightarrow M$ is equal to $\phi$.
\end{lemma}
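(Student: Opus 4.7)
I would argue by induction on the nilpotency class $k$ of the filtration $\mathcal V$. The base case $k=1$ asks to lift an affine homomorphism $\phi:\mathbb Z^n\to M=F/H$ of abelian groups through the surjection $F\to M$; choosing arbitrary preimages in $F$ of $\phi(0)$ and of $\phi(e_i)-\phi(0)$ for the standard generators $e_i$ and extending affinely produces a map $\psi:\mathbb Z^n\to F$ which is a homomorphism-plus-translation, hence a $\mathcal V$-polynomial of degree one.

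For the induction step, set $\bar F:=F/F_{k-1}$ with the induced filtration $\bar{\mathcal V}:=\{F_i/F_{k-1}\}_{i=0}^{k-1}$, and $\bar M:=F/(HF_{k-1})$, so that the natural projection $\pi:M\to\bar M$ collapses only the bottom structure group $A_{k-1}=F_{k-1}H/H$. The composition $\pi\circ\phi:\mathbb Z^n\to\bar M$ is a morphism of $(k-1)$-step nilspaces, so by the inductive hypothesis it admits a $\bar{\mathcal V}$-polynomial lift $\bar\psi:\mathbb Z^n\to\bar F$. Using the standard representation of Leibman polynomials (cf.\ \cite{Lei2}), $\bar\psi$ factors as a finite product of elementary factors $x\mapsto\bar a_\alpha^{p_\alpha(x)}$ with $\bar a_\alpha\in F_{i_\alpha}/F_{k-1}$ and $p_\alpha(x)=\prod_j{{x_j}\choose{n_{\alpha,j}}}$, $\sum_j n_{\alpha,j}\leq i_\alpha$. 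Choosing arbitrary preimages $a_\alpha\in F_{i_\alpha}$ and taking the same product in $F$ produces a set-theoretic lift $\psi_0:\mathbb Z^n\to F$ of $\bar\psi$ which is itself a $\mathcal V$-polynomial, since each elementary factor is one and $\mathcal V$-polynomials are closed under multiplication \cite{Lei2}.

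Composing $\psi_0$ with $F\to M$ gives a morphism $\phi_0:\mathbb Z^n\to M$ that coincides with $\phi$ after projection to $\bar M$. The two thus differ only by a function $\eta:\mathbb Z^n\to A_{k-1}$, well-defined because $F_{k-1}$ is central in $F$ (as $[F_{k-1},F]\subseteq F_k=\{1\}$): any pair of lifts in $F$ of $\phi(x)$ and $\phi_0(x)$ differs on the left by an element of $F_{k-1}$ up to right multiplication by $H$, and this element modulo $F_{k-1}\cap H$ defines $\eta(x)$. The key claim is that $\eta$ is a Leibman polynomial of degree at most $k$ into the abelian group $A_{k-1}$; granting this, Lemma \ref{polyab} writes $\eta$ as a finite sum $\sum_j a_j\prod_i{{x_i}\choose{n_{i,j}}}$ with $a_j\in A_{k-1}$ and $\sum_i n_{i,j}\leq k$. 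Lifting each $a_j$ to $\tilde a_j\in F_{k-1}$ and forming the corresponding monomial $F_{k-1}$-valued maps $\eta_j$, the product $\eta_{\mathrm{lift}}:=\prod_j\eta_j$ is a $\mathcal V$-polynomial, and $\psi:=\eta_{\mathrm{lift}}\cdot\psi_0$ is the desired $\mathcal V$-polynomial lift of $\phi$ (using once more closure of $\mathcal V$-polynomials under multiplication).

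The main obstacle is the claim that the discrepancy $\eta$ is a Leibman polynomial of degree at most $k$. This is precisely the point at which the nilspace morphism axioms on $\phi$ must be translated into filtration-based polynomial behavior on the bottom abelian structure group; the argument likely proceeds by testing $\eta$ against $(k+1)$-dimensional cube morphisms $c:\{0,1\}^{k+1}\to\mathbb Z^n$ and exploiting the uniqueness clause of the gluing axiom in the $k$-step nilspace $M$ together with the fact that both $\phi\circ c$ and $\phi_0\circ c$ are cubes in $M$ agreeing modulo $A_{k-1}$. Everything else reduces to bookkeeping around this translation.
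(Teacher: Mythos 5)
Your plan is sound and ends up structurally parallel to the paper's argument, only organized in the opposite direction along the filtration. The paper inducts on how deep the image of $\phi$ sits: it projects onto the top remaining structure group $A_{k-j}=F_{k-j}H/F_{k-j+1}H$, uses Lemma \ref{polyab} to write that projection as a product of monomials, lifts the coefficients into $F_{k-j}$ to obtain a $\mathcal{V}$-polynomial $\alpha$, and replaces $\phi$ by $\alpha^{-1}\phi$, whose image lies one level deeper; the final lift is the accumulated product. You instead quotient by the central subgroup $F_{k-1}$, lift the projected morphism by induction, and correct by a discrepancy $\eta$ with values in the top structure group $A_{k-1}=F_{k-1}H/H$. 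Both schemes reduce, once per level, to the same two ingredients: Lemma \ref{polyab} for abelian targets and closure of $\mathcal{V}$-polynomials under multiplication. One small economy: your appeal to a Taylor-type factorization of $\bar\psi$ in order to lift it from $F/F_{k-1}$ to $F$ is not covered by Lemma \ref{polyab} (which is only for abelian targets) and is avoidable — strengthen the induction hypothesis so that the lift is delivered as a finite product of monomial maps $x\mapsto b^{p(x)}$ with $b\in F_i$ and $p$ of degree at most $i+1$ (this is exactly what the paper's construction outputs), after which termwise lifting is immediate.

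The genuine gap is the claim you explicitly defer: that the discrepancy $\eta:\mathbb{Z}^n\rightarrow A_{k-1}$ is a polynomial of degree at most $k$. That is the only substantive step of the lemma — it is where the nilspace-morphism hypothesis gets converted into filtration-polynomial behavior — and "the argument likely proceeds by testing against $(k+1)$-cubes" does not yet do it. It can be closed with tools already in the paper: $\phi$ and $\phi_0$ are morphisms into the $k$-step nilspace $M$ that agree after projection to the $(k-1)$-step factor $F/F_{k-1}H$, so for every cube $c:\{0,1\}^m\rightarrow\mathbb{Z}^n$ the cubes $\phi\circ c$ and $\phi_0\circ c$ lie in the same fibre of $C^m(M)\rightarrow C^m(M_{k-1})$; by the third condition in the definition of $k$-degree bundles that fibre is a coset of $C^m(\mathcal{D}_k(A_{k-1}))$, hence $\eta\circ c\in C^m(\mathcal{D}_k(A_{k-1}))$ for every $c$, i.e. $\eta$ is a morphism into $\mathcal{D}_k(A_{k-1})$. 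Applying the defining alternating-sum condition of $\mathcal{D}_k$ to the cubes $v\mapsto x+\sum_i v_ih_i$ gives $D_{h_1}\cdots D_{h_{k+1}}\eta=0$ for all $h_i$, which is precisely degree at most $k$, and then Lemma \ref{polyab} finishes as you describe. Note that the paper performs the analogous translation in one sentence ("the cube preserving property of $\phi$ shows..."), so the missing step is the same one the paper treats tersely — but since it carries all the content, it is the step your write-up needed to argue rather than postulate.
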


\begin{proof} Using induction of $j$ we show the statement for maps whose image is in $F_{k-j}H$.
If $j=0$ then $\phi$ is a constant map and then the statement is trivial. Assume that we have the statement for $j-1$ and assume that the image of $\phi$ is in $F_{k-j}H$.
The cube preserving property of $\phi$ shows that $\phi$ composed with the factor map $F_{k-j}H\rightarrow F_{k-j}H/F_{k-j+1}H=A_{k-j}$ is a degree $k-j+1$ polynomial map $\phi_2$ of $\mathbb{Z}^n$ into the abelian group $A_{k-j}$. 

We have from lemma \ref{polyab} that using multiplicative notation
\begin{equation}\label{polfor}
\phi_2(x_1,x_2,\dots,x_n)=\prod_{t=1}^m a_t^{f_t(x_1,x_2,\dots,x_n)}
\end{equation}
where $a_t\in A_{k-j}$ and $f_t$ is an integer valued polynomial of degree at most $k-j+1$ for every $t$. Let us choose elements $b_1,b_2,\dots,b_m$ in $F_{k-j}$ such that their images in $A_{k-j}$ are $a_1,a_2,\dots,a_m$.
Let us define the function $\alpha:\mathbb{Z}\rightarrow F$ given by the formula (\ref{polfor}) when $a_t$ is replaced by $b_t$. The map $\alpha$ is a $\mathcal{V}$-polynomial.

Since $\phi$ maps to the left cosets of 
$H$ it makes sens to multiply $\phi$ by $\alpha^{-1}$ from the left. It is easy to see that the new map $\gamma=\alpha^{-1}\phi$ is a morphism of $\mathbb{Z}$ to $F_{k-j+1}H$ and thus by induction it can be lifted to a $\mathcal{V}$ polynomial $\delta$. Then we have that $\alpha\delta$ is a lift of $\phi$ to a polynomial map. 
\end{proof}

\begin{corollary} If $A$ is a finite abelian group and $f:A\rightarrow M$ is a morphism then for every homomorphism $\beta:\mathbb{Z}^n\rightarrow A$ there is a degree $k$ polynomial map $\phi:\mathbb{Z}^n\rightarrow F$ such that $\phi$ composed with the factor map $F\rightarrow M$ is the same as $\beta$ composed with $f$.
\end{corollary}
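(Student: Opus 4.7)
The plan is to apply Lemma~\ref{felemelo} directly to the composition $f\circ\beta\colon\mathbb{Z}^n\to M$. To justify this, I first need to know that $f\circ\beta$ is a nilspace morphism. For $f$ this is the hypothesis. For the group homomorphism $\beta\colon\mathbb{Z}^n\to A$, recall that cubes in an abelian group are precisely the maps $\{0,1\}^m\to A$ that extend to an affine homomorphism $\mathbb{Z}^m\to A$; composing such an extension with $\beta$ produces another affine homomorphism $\mathbb{Z}^m\to A$, so $\beta$ carries cubes in $\mathbb{Z}^n$ to cubes in $A$ and is therefore a morphism of nilspaces. Since a composition of morphisms is a morphism, $f\circ\beta\colon\mathbb{Z}^n\to M$ is a morphism.

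Next I apply Lemma~\ref{felemelo} to $f\circ\beta$. It supplies a lift $\phi\colon\mathbb{Z}^n\to F$ of $f\circ\beta$ through the factor map $\pi\colon F\to M$, so that $\pi\circ\phi=f\circ\beta$, and it guarantees that $\phi$ is a $\mathcal{V}$-polynomial. This is the candidate for the map asked for in the corollary; only the degree statement remains to be checked.

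To see that $\phi$ is automatically a polynomial of degree $k$ in Leibman's sense, one unwinds the defining condition: $\phi$ reduced modulo $F_i$ is a polynomial of degree $i$ for each $i$. Specializing to $i=k$ and using that $F_k=\{1\}$ (so reduction modulo $F_k$ is the identity on $F$) yields that $\phi$ itself is polynomial of degree $k$ as a map $\mathbb{Z}^n\to F$.

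No serious obstacle arises. In particular the finiteness of $A$ is not actually used; the entire substance of the statement is carried by Lemma~\ref{felemelo}, and the only care required is the observation that a group homomorphism $\beta$ respects the abelian-group cube structure, which is immediate from the affine characterization of cubes.
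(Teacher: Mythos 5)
Your proposal is correct and follows exactly the route the paper intends: the corollary is stated as an immediate consequence of Lemma~\ref{felemelo}, obtained by noting that the homomorphism $\beta$ sends cubes of $\mathbb{Z}^n$ to cubes of $A$, so $f\circ\beta$ is a morphism to which the lemma applies, and the resulting $\mathcal{V}$-polynomial lift is in particular a degree $k$ polynomial since $F_k=\{1\}$. Your side remark that finiteness of $A$ is not needed is also accurate; it is merely part of the setting in which the corollary is later applied.
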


\begin{definition}[$d$-dimensional polynomial nilsequence] Assume that $F$ is a connected $k$-nilpotent Lie group with filtration $\mathcal{V}$ and $\Gamma$ is a co-compact subgroup of $F$. Assume that $M$ is the left coset space of $\Gamma$ in $F$. Then a map $h:\mathbb{Z}^d\rightarrow\mathbb{C}$ is called a $d$-dimensional polynomial nilsequence (corresponding to $M$) if there is a polynomial map $\phi:\mathbb{Z}^d\rightarrow F$ of degree $k$ and a continuous Lipschitz function $g:M\rightarrow\mathbb{C}$ such that $h$ is the composition of $\phi$, the projection $F\rightarrow M$ and $g$. 
The complexity of such a nilsequence is measured by the maximum of $c$ and the complexity of $M$.
\end{definition}

Let $\mathfrak{A}$ be a $0$-characteristic family of abelian groups. Then all the structure groups of $\mathfrak{A}$-nilspaces are free abelian groups. By \cite{NP} we get that $\mathfrak{A}$-nilspaces are nilmanifolds with a given filtration. 
From this and theorem \ref{restinv} we obtain the following consequence.
\begin{theorem}[polynomial nilsequence inverse theorem]\label{PNIT} Let $\mathfrak{A}$ be a $0$ characteristic family of finite abelian groups. Let us fix a natural number $k$. For every $\epsilon>0$ there is a number $n$ such that if $\|f\|_{U_{k+1}}\geq\epsilon$ for some measurable $f:A\rightarrow\mathbb{C}$ with $|f|\leq 1$ on $A\in\mathfrak{A}$ with $d$-generators $a_1,a_2,\dots,a_d$ then $(f,g)\geq\epsilon^{2^k}/2$ such that $g(n_1a_1+n_2a_2+\dots+n_da_d)=h(n_1,n_2,\dots,n_d)$ for some $d$-dimensional polynomial nilsequence $h$ of complexity at most $n$.
\end{theorem}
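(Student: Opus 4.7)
The plan is to apply the specialized inverse theorem (Theorem \ref{restinv}) and then unwind the resulting nilspace polynomial via the lifting machinery developed in Lemma \ref{felemelo} and its corollary. First, I would invoke Theorem \ref{restinv} for the given family $\mathfrak{A}$ and exponent $\epsilon$, producing a bound $n$ such that whenever $\|f\|_{U_{k+1}}\geq\epsilon$ there exists an $\mathfrak{A}$-nilspace polynomial $g=g_0\circ\phi$ of complexity at most $n$ with $(f,g)\geq\epsilon^{2^k}/2$. Here $\phi:A\to N$ is a continuous morphism into a $k$-step $\mathfrak{A}$-nilspace $N$ whose complexity is at most $n$, and $g_0:N\to\mathbb{C}$ is continuous with Lipschitz constant at most $n$.

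The second step is to identify the nilspace $N$ as a nilmanifold of bounded complexity. By the definition of $\mathfrak{A}$-nilspace together with the $0$-characteristic assumption on $\mathfrak{A}$, each structure group $A_i$ of $N$ has free abelian dual; since $A_i$ is compact this forces $A_i$ to be a finite-dimensional torus, so all structure groups are connected. Invoking the structural results of \cite{NP} cited in the paper, $N$ is therefore a connected nilmanifold $F/\Gamma$, where $F$ is a connected $k$-step nilpotent Lie group equipped with a canonical filtration $\mathcal{V}=\{F_i\}_{i=0}^k$ coming from the iterated abelian bundle decomposition and $\Gamma\subset F$ is a cocompact discrete subgroup, and the nilspace structure on $F/\Gamma$ is exactly the one defined from $\mathcal{V}$.

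The third step is to transfer $\phi$ to $\mathbb{Z}^d$ and lift it. Let $\beta:\mathbb{Z}^d\to A$ be the surjective homomorphism $\beta(n_1,\dots,n_d)=\sum_i n_i a_i$ determined by the chosen generators. The composition $\phi\circ\beta:\mathbb{Z}^d\to N$ is a morphism between cubespaces, and the corollary after Lemma \ref{felemelo} (applied to $A$ finite abelian, $f=\phi$, $M=N=F/\Gamma$) then provides a degree $k$ $\mathcal{V}$-polynomial lift $\psi:\mathbb{Z}^d\to F$ satisfying $\pi\circ\psi=\phi\circ\beta$, where $\pi:F\to N$ is the canonical projection. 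Defining $h(n_1,\dots,n_d):=g_0(\pi(\psi(n_1,\dots,n_d)))$ yields, essentially by definition, a $d$-dimensional polynomial nilsequence corresponding to $M=F/\Gamma$ whose complexity is controlled by the complexity of $N$ and the Lipschitz constant of $g_0$, hence by a function of $n$ alone.

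Finally, the identity $g(n_1 a_1+\dots+n_d a_d)=g_0(\phi(\beta(n_1,\dots,n_d)))=g_0(\pi(\psi(n_1,\dots,n_d)))=h(n_1,\dots,n_d)$ is immediate from the construction, and the correlation bound $(f,g)\geq\epsilon^{2^k}/2$ is inherited verbatim from Theorem \ref{restinv}. The main technical obstacle I expect lies in step two, namely the rigorous identification of the filtration on $F$ produced by the iterated abelian bundle construction of \cite{NP} with the filtration $\mathcal{V}$ governing the notion of $\mathcal{V}$-polynomial used in Lemma \ref{felemelo}, so that the degree bound $k$ is accurate and the complexity of the resulting nilsequence can be controlled uniformly in terms of the input complexity $n$; after that the remaining steps are essentially bookkeeping.
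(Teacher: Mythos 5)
Your proposal follows the paper's own route: apply Theorem \ref{restinv} to get an $\mathfrak{A}$-nilspace polynomial correlating with $f$, note that the $0$-characteristic hypothesis forces the structure groups' duals to be free abelian so that by \cite{NP} the nilspace is a nilmanifold with its filtration-defined cube structure, and then lift the morphism along $\beta:\mathbb{Z}^d\to A$ using the corollary of Lemma \ref{felemelo} to produce the polynomial nilsequence $h$. This matches the paper's argument, including the point you flag as the main technical obstacle, which the paper likewise handles by citing the structural results of \cite{NP}.
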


Note that the above theorem implies an interesting periodicity since the defining equation of $g$ is true for every $d$-tuple $n_1,n_2,\dots,n_d$ of integers. 
Using theorem \ref{PNIT} we obtain the Green-Tao-Ziegler inverse theorems for functions $f:[N]\rightarrow\mathbb{C}$ with $|f|\leq 1$. Their point of view is that if we put the interval $[N]$ into a large enough cyclic group (say of size $m>N2^{k+1}$) then the normalized version of $\|f\|_{U_{k+1}}$ does not depend on the choice of $m$. The proper normalization is to divide with the $U_{k+1}$-norm of the characteristic function on $1_{[N]}$. 

To use theorem \ref{PNIT} in this situation we need to make sure that $m$ is not too big and that it has only large prime divisors. This can be done by choosing a prime between $N2^{k+1}$ and $N2^{k+2}$.
Then we can apply theorem \ref{PNIT} for the family of cyclic groups of prime order which is clearly a $0$ characteristic family.
What we directly get is that $f$ correlates with a bounded complexity polynomial nil-sequence of degree $k$.
This seems to be weaker then the Green-Tao-Ziegler theorem because they obtain the correlation with a linear nil-sequence. However in the appendix of \cite{GTZ} it is pointed out that the two versions are equivalent.

Form theorem \ref{PNIT} we can also obtain a $d$-dimensional inverse theorem for functions of the form $f:[N]^d\rightarrow\mathbb{C}$ with $|f|\leq 1$. Here we use the family of $d$-th direct powers of cyclic groups with prime order. 

\begin{theorem}[Multi dimensional inverse theorem] Let us fix two natural numbers $d,k>0$. Then for every $\epsilon>0$ there is a number $n$ such that for every function $f:[N]^d\rightarrow\mathbb{C}$ with $\|f\|_{U_{k+1}}\geq\epsilon$ there is a $d$-dimensional polynomial nil-sequence $h:\mathbb{Z}^d\rightarrow\mathbb{C}$ of complexity at most $n$ and degree $k$ such that $(f,h)\geq \epsilon^{2^k}/2$. 
\end{theorem}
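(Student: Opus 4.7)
The plan is to reduce to the polynomial nil-sequence inverse theorem (Theorem \ref{PNIT}) via the embedding trick that the paper outlines for the one-dimensional case, carried out in $d$ dimensions simultaneously. Using Bertrand's postulate, pick a prime $p$ with $N\cdot 2^{k+1} < p < N\cdot 2^{k+2}$; identify $[N]^d$ with its image inside $G = (\mathbb{Z}/p\mathbb{Z})^d$ in the obvious way and let $\tilde f : G \to \mathbb{C}$ be the extension of $f$ by zero.

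The first step is to transfer the hypothesis $\|f\|_{U_{k+1}} \geq \epsilon$ from $[N]^d$ to $G$. By definition, the normalized $U_{k+1}$-norm on $[N]^d$ is the Gowers norm of $\tilde f$ on $G$ divided by $\|1_{[N]^d}\|_{U_{k+1}(G)}$. Since $1_{[N]^d}$ is a $d$-fold tensor product and the Gowers norm is multiplicative under tensor products, one has $\|1_{[N]^d}\|_{U_{k+1}(G)} = \|1_{[N]}\|_{U_{k+1}(\mathbb{Z}/p\mathbb{Z})}^d$, and since $p/N$ is bounded in terms of $k$ alone, this quantity is bounded below by a constant $c_{d,k} > 0$. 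Hence $\|\tilde f\|_{U_{k+1}(G)} \geq c_{d,k}\,\epsilon =: \epsilon'$, and clearly $|\tilde f|\le 1$.

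Next I verify that $\mathfrak{A} = \{(\mathbb{Z}/q\mathbb{Z})^d : q \text{ prime}\}$ is a $0$-characteristic family in the sense of the paper: for any fixed natural number $n$, the condition $n \mid q^d$ forces $n = q^e$ with $0 \leq e \leq d$, so at most one prime $q$ in the family qualifies for any given $n$. By item (3) of the list following the definition of $(\mathfrak{A})_k$, the dual objects $(\mathfrak{A})_i$ then consist entirely of free abelian groups for every $i$, so Theorem \ref{PNIT} applies. Applied to $\tilde f$ on $G \in \mathfrak{A}$ with the canonical generators $e_1,\dots,e_d$ of $(\mathbb{Z}/p\mathbb{Z})^d$, it produces a $d$-dimensional polynomial nil-sequence $h : \mathbb{Z}^d \to \mathbb{C}$ of degree $k$ and complexity at most some $n_0 = n_0(\epsilon', d, k)$, whose associated function $g(n_1 e_1 + \dots + n_d e_d) = h(n_1,\dots,n_d)$ satisfies $(\tilde f, g)_G \geq (\epsilon')^{2^k}/2$.

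Finally, since $\tilde f$ is supported on the copy of $[N]^d$ inside $G$, the normalized inner product $(\tilde f, g)_G$ differs from $(f, h)_{[N]^d}$ only by the bounded factor $|G|/N^d = (p/N)^d \leq 2^{(k+2)d}$. Absorbing this factor and the constant $c_{d,k}^{2^k}$ by decreasing $\epsilon$ to a smaller $\epsilon''$ and enlarging $n_0$ accordingly yields the stated bound $(f, h) \geq \epsilon^{2^k}/2$ with $n = n(\epsilon, d, k)$. The main (but essentially routine) obstacle is the careful bookkeeping of normalization constants between the two groups $[N]^d \hookrightarrow G$ and between counting and Haar measures; once this is handled, the periodicity of $h$ on $\mathbb{Z}^d$ (as highlighted in the remark after Theorem \ref{PNIT}) makes the restriction back to $[N]^d$ automatic, and no boundary artifacts arise from the zero-extension.
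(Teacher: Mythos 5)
Your proposal follows essentially the same route as the paper, which proves this theorem only by a brief remark: embed $[N]^d$ into $(\mathbb{Z}/p\mathbb{Z})^d$ for a prime $p$ between $N2^{k+1}$ and $N2^{k+2}$, note that the family of $d$-th powers of prime-order cyclic groups is $0$-characteristic, and apply Theorem \ref{PNIT} together with the normalization/periodicity remarks made for the one-dimensional case. Your write-up actually supplies more detail (tensor multiplicativity of the Gowers norm, the bookkeeping between counting and Haar normalizations) than the paper does, and the only loose point -- recovering the exact constant $\epsilon^{2^k}/2$ after the normalization losses, which ``decreasing $\epsilon$'' does not literally achieve -- is one the paper itself glosses over in the same way.
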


Note that $(f,h)$ is the scalar product normalized as as $(f,h)=N^{-d}\sum_{v\in [N]^d}(f(v)\overline{h(v)})$.

\subsection{A curious example using the Heisenberg group}\label{heis}

In this chapter we discuss an example which highlights a difference between the nilseqence approach used in \cite{GTZ} and the nilspace-polynomial approach used in the present paper. 

Let $e(x)=e^{x2\pi i}$. For an integer $1<t<m$ we introduce the function $f:\mathbb{Z}_m\rightarrow\mathbb{C}$ defined by $f(k)=\lambda^{k^2}$ where $\lambda=e(t/m^2)$ and $k=0,1,2,\dots,m-1$. Note that this function does not ``wrap around'' nicely like a more simple quadratic function of the form $k\mapsto\epsilon^{k^2}$ where $\epsilon$ is an $m$-th root of unity. This means that to define $f$ we need to choose explicit integers to represent the residue classes modulo $m$.
On the other hand it can be seen that $\|f\|_{U_3}$ is uniformly separated from $0$ so it has some quadratic structure.

In the nilsequence approach this function is not essentially different from the case of $k\mapsto\epsilon^{k^2}$.
However in our approach we are more sensitive about the periodicity issue since we want to establish $f$ through a very rigid algebraic morphism which uses the full group structure of $\mathbb{Z}_m$.
We will show that the quadratic structure of $f$ is tied to a nilspace morphism $\phi$ which maps $\mathbb{Z}_m$ into the Heisenberg nilmanifold.

The Heisenberg group $H$ is the group of three by three upper uni-triangular matrices with real entries.
Let $\Gamma\subset H$ be the set of integer matrices in $H$. It can be seen that $\Gamma$ is a co-compact subgroup. The left coset space $N=\{g\Gamma|g\in H\}$ of $\Gamma$ in $H$ is the Heisenberg nilmanifold.
Let $M\in H$ be the following matrix:
\[ \left( \begin{array}{ccc}
1 & 2t/m & t/m^2\\
0 & 1 & 1/m \\
0 & 0 & 1 \end{array} \right)\]
then

\[M^k= \left( \begin{array}{ccc}
1 & 2kt/m & k^2t/m^2\\
0 & 1 & k/m \\
0 & 0 & 1 \end{array} \right)\]
In particular $M^m$ is an integer matrix.
This implies that the map $\tau:k\rightarrow M^k\Gamma$ defines a periodic morphism from $\mathbb{Z}$ to $N$.
Since the period length is $m$, it defines a morphism $\phi:\mathbb{Z}_m\rightarrow N$.

Let $D$ be the set of elements in $H$ in which all entries are between $0$ and $1$. The set $D$ is a fundamental domain for $\Gamma$. We can define a function on $N$ by representing it on the fundamental domain.
Let $g:D\rightarrow\mathbb{C}$ be the function $A\rightarrow e(A_{1,3})$ where $A_{1,3}$ is the upper-right corner of the matrix $A$. 

We compute $g(\tau(k))=g(M^k\Gamma)$ by multiplying $M^k$ back into the fundamental domain $D$.
Since $g(M^k\Gamma)$ is periodic we can assume that $0\leq k<m$.
Let us multiply $M^k$ from the right by

\[ \left( \begin{array}{ccc}
1 & -\lfloor 2kt/m\rfloor & -\lfloor k^2t/m^2\rfloor\\
0 & 1 & 0\\
0 & 0 & 1 \end{array} \right)\]

We get 

\[ \left( \begin{array}{ccc}
1 & \{ 2kt/m\} & \{k^2t/m^2\}\\
0 & 1 & k/m\\
0 & 0 & 1 \end{array} \right)\in D\]

So the value of $g$ on $\tau(k)$ is $e(\{k^2t/m^2\})=e(k^2t/m^2)$.

\subsection{Higher order Fourier analysis on the cirlce}

In this chapter we sketch a consequence of our results when specialized to the circle grouop $C=\mathbb{R}/\mathbb{Z}$.
In a separate paper we will devote more attention to this important case.
Since the circle falls in to the $0$-characteristic case, higher order Fourier analysis on the circle deals with continuous morphisms from $C$ to nilmanifolds with a prescribed filtration.

Let $F$ be a $k$-nilpotent Lie group and $\Gamma$ be a lattice in $F$. Let $N$ be the left coset space of $\Gamma$ in $F$. We define the nilspace structure on $N$ (as in chapter \ref{cyclic}) using a filtration $\mathcal{V}$ on $F$.
Assume that $\phi:C\rightarrow N$ is a continuous nilspace morphism.
Our goal is to show that $\phi$ can be lifted to a continuous polynomial map $f:\mathbb{R}\rightarrow F$.

\begin{lemma}\label{lin} Let $g:C\rightarrow C$ be a continuous polynomial map. Then $g$ is linear. 
\end{lemma}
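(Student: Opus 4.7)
The plan is to lift $g$ to the universal cover of the circle and reduce the problem to a classical statement about polynomials on $\mathbb{R}$. Since $\mathbb{R}$ is simply connected, the continuous map $g$ admits a continuous lift $\tilde g:\mathbb{R}\rightarrow\mathbb{R}$ satisfying $\pi\circ\tilde g = g\circ\pi$, where $\pi:\mathbb{R}\rightarrow C$ is the quotient map.

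Suppose $g$ has degree $k$ in the sense of Leibman. Written additively, the condition $D_{h_1}\cdots D_{h_{k+1}}g \equiv 0$ becomes the identity
\[ \sum_{S\subseteq\{1,\ldots,k+1\}}(-1)^{|S|}g\Bigl(x+\sum_{i\in S}h_i\Bigr)=0 \qquad\text{in } C. \]
Lifting term by term through $\tilde g$, the same alternating sum defines a continuous $\mathbb{Z}$-valued function $F(x,h_1,\ldots,h_{k+1})$ on $\mathbb{R}^{k+2}$. Since $\mathbb{Z}$ is discrete and $\mathbb{R}^{k+2}$ is connected, $F$ is constant; evaluating at $h_1=\cdots=h_{k+1}=0$ shows the constant equals $0$. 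Hence all $(k+1)$-st finite differences of $\tilde g$ vanish identically, and by the classical characterization of polynomials via finite differences (a continuous function on $\mathbb{R}$ with vanishing $(k+1)$-st differences is a real polynomial of degree at most $k$) we obtain $\tilde g(x)=a_k x^k+\cdots+a_0$ for some real coefficients $a_0,\ldots,a_k$.

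The final step uses periodicity to force degree at most $1$. Because $g$ descends to $C=\mathbb{R}/\mathbb{Z}$, the continuous function $x\mapsto \tilde g(x+1)-\tilde g(x)$ takes values in $\mathbb{Z}$, so by connectedness of $\mathbb{R}$ it equals a constant integer $m$. On the other hand, if $\tilde g$ had real degree $k\geq 2$, then $\tilde g(x+1)-\tilde g(x)$ would be a polynomial of degree exactly $k-1\geq 1$ with nonzero leading coefficient $k a_k$, contradicting constancy. Therefore $\tilde g(x)=mx+b$ with $m\in\mathbb{Z}$, which descends to an affine (in particular linear) map $C\rightarrow C$, as claimed.

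The only delicate point in this argument is the connectedness step in the middle paragraph: it is essential to regard the alternating sum as a continuous function on the whole of $\mathbb{R}^{k+2}$ (rather than on a fiber with the $h_i$ fixed) in order to pass from ``$\mathbb{Z}$-valued'' to ``identically zero''. Once that is cleanly set up, the remaining ingredients are standard facts from covering space theory and elementary real analysis.
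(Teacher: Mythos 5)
Your proof is correct, but it follows a genuinely different route from the paper. The paper argues by contradiction: it first reduces to the case of a non-linear quadratic $g$, then observes that $h\mapsto D_h(g)$ is an $L^2$-continuous family of affine characters and uses orthogonality of the characters of $C$ to force the character to be constant in $h$, hence trivial, hence $g$ linear --- a Fourier-analytic argument intrinsic to the circle. You instead lift $g$ to the universal cover, use connectedness of $\mathbb{R}^{k+2}$ to upgrade the $\bmod\ \mathbb{Z}$ difference identity to an exact one for $\tilde g$, invoke the classical Fr\'echet-type theorem that a continuous real function with vanishing $(k+1)$-st finite differences is a real polynomial of degree at most $k$, and finally use the periodicity constraint $\tilde g(x+1)-\tilde g(x)\in\mathbb{Z}$ to exclude degree $\geq 2$. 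Your version is more elementary and self-contained (no reduction to the quadratic case, no Fourier analysis), works uniformly in $k$, and extends verbatim to maps $C\rightarrow C^n$ componentwise, which is the corollary the paper needs next; the paper's version is shorter and stays in the style of character arguments used elsewhere in the text. One cosmetic remark: your parenthetical ``in particular linear'' should be read with the paper's convention that ``linear'' means degree-$1$ polynomial, i.e.\ an affine endomorphism $x\mapsto mx+b$ with $m\in\mathbb{Z}$, which is exactly what you produce, so the conclusions agree.
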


\begin{proof} In this proof we will think of $C$ as the complex unit circle. Assume by contradiction that $g$ is not linear. Then by repeatedly applying operators $D_h$ to $g$ we can get a non-linear quadratic function.
This means that it is enough to get a contradiction if $g$ is quadratic.
In this case $D_h(g)$ is a linar map that depends continuously on $h\in C$ in the $L_2$ norm. On the other hand  $D_h(g)$ is a linear character times a compex number from the unit circle. The characters are orthogonal to each other and so the character corresponding to $D_h(g)$ has to be the same for every $h$. This is only possible if it is the trivial character but then $g$ is linear.
\end{proof}

As a consequence we obtain that a polynomial map $g:C\rightarrow C^n$ is also linear.
Using lemma \ref{lin} and (essentially) the same argument as in lemma \ref{felemelo} we obtain the following.

\begin{lemma} If $\phi:C\rightarrow N$ is a morphism then it has a lift $f:\mathbb{R}\rightarrow F$ such that $f=f_1f_2\dots f_k$ where $f_i$ is a continuous homomorphism for every $i$.
\end{lemma}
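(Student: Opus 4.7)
The strategy mirrors the induction of Lemma \ref{felemelo}, but the use of Lemma \ref{lin} (and its consequence for continuous polynomial maps $C\to C^n$) forces the abelian ``polynomial'' layers produced along the way to collapse to genuine continuous homomorphisms, which is precisely what allows the lift to be a product of one-parameter subgroups.

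The plan is to induct on $j=0,1,\dots,k$ and prove the following strengthening: every continuous nilspace morphism $\phi:C\to N$ whose image lies in the sub-nilspace $F_{k-j}\Gamma/\Gamma$ admits a lift $f:\mathbb{R}\to F_{k-j}$ of the form $f=f_{k-j}f_{k-j+1}\cdots f_k$ with $f_i:\mathbb{R}\to F_i$ a continuous homomorphism for each $i$. The base case $j=0$ is trivial since $F_k=\{1\}$ forces $\phi$ to be constant. For the inductive step, first compose $\phi$ with the projection $F_{k-j}\Gamma/\Gamma\to F_{k-j}\Gamma/F_{k-j+1}\Gamma\cong A_{k-j}$. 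The cube-preserving property of $\phi$ implies that the resulting continuous map $\phi_2:C\to A_{k-j}$ is a polynomial of degree at most $k-j+1$ in the sense of Leibman. Because $F$ is a connected nilpotent Lie group with lattice $\Gamma$, the structure group $A_{k-j}$ is a torus $C^{n_{k-j}}$, so Lemma \ref{lin} (together with its stated extension to $C^n$-valued maps) forces $\phi_2$ to be a continuous homomorphism.

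Next I would lift $\phi_2$. Since $\phi_2:C\to A_{k-j}\cong C^{n_{k-j}}$ is a continuous homomorphism, it lifts canonically through the universal cover to a linear map $\mathbb{R}\to\mathbb{R}^{n_{k-j}}$; choosing a smooth section of $F_{k-j}\to A_{k-j}$ at the Lie-algebra level (or, equivalently, picking a one-parameter subgroup $X_{k-j}\in\mathfrak{f}_{k-j}$ projecting to the chosen linear map) produces a continuous homomorphism $f_{k-j}:\mathbb{R}\to F_{k-j}$ whose image in $A_{k-j}$ agrees with $\phi_2$ precomposed with $\mathbb{R}\to C$. Now multiply $\phi\circ\pi$ on the left by $f_{k-j}^{-1}$, where $\pi:\mathbb{R}\to C$ is the quotient; this makes sense because $\phi$ maps into left cosets of $\Gamma$ in $F$. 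The resulting map $\gamma:\mathbb{R}\to F_{k-j+1}\Gamma/\Gamma\subset N$ is well-defined modulo $\Gamma$ and, crucially, descends to a continuous morphism $\bar\gamma:C\to N$ with image in $F_{k-j+1}\Gamma/\Gamma$, because the ``period discrepancy'' $f_{k-j}(x+1)^{-1}f_{k-j}(x)$ lies in $F_{k-j+1}\Gamma$ by construction of $f_{k-j}$ as a homomorphism matching $\phi_2$. Applying the inductive hypothesis to $\bar\gamma$ gives a lift $g=f_{k-j+1}\cdots f_k$, and then $f=f_{k-j}\,g$ is the desired lift of $\phi$.

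The main obstacle is the periodicity/descent check in the reduction step: one must verify that $f_{k-j}$ can be chosen so that $f_{k-j}^{-1}(\phi\circ\pi)$ is genuinely $\mathbb{Z}$-periodic modulo $\Gamma$ (so that it descends from $\mathbb{R}$ to $C$) while simultaneously landing in $F_{k-j+1}\Gamma/\Gamma$. This is exactly where Lemma \ref{lin} is decisive: it forces $\phi_2$ to be a homomorphism (not merely a polynomial), so the obstruction $f_{k-j}(1)^{-1}$ and the coset representative of $\phi(\pi(0))$ can be matched up inside $F_{k-j+1}\Gamma$, producing a well-defined morphism on the quotient $C$ to which induction applies. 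The remainder of the argument is a direct transcription of the proof of Lemma \ref{felemelo} with the abelian layer simplified from a polynomial to a one-parameter subgroup.
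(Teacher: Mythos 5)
Your skeleton (the induction of Lemma \ref{felemelo}, Lemma \ref{lin} to linearize the projection to the structure group, and easy Lie theory to lift one--parameter subgroups) is exactly the route the paper intends, but the step you yourself single out as the main obstacle --- that $\gamma=f_{k-j}^{-1}(\phi\circ\pi)$ descends to a morphism $\bar\gamma:C\to N$ --- is not established by your argument, and as stated it fails. Since $f_{k-j}$ is a homomorphism, $\gamma(t+1)=f_{k-j}(1)^{-1}\cdot\gamma(t)$, i.e.\ the two values differ by the \emph{left} translation of $N$ by $f_{k-j}(1)^{-1}$. Knowing that $f_{k-j}(1)$ lies in $F_{k-j+1}\Gamma$, or even arranging $f_{k-j}(1)\in\Gamma$ (which is possible, e.g.\ by taking $f_{k-j}(t)=\exp(t\log\sigma)$ where $\sigma\in\Gamma$ is the monodromy of the continuous lift of $\phi\circ\pi$ through the covering $F\to N$), does not give periodicity: on the left coset space $N=\{g\Gamma\}$ only \emph{right} translation by $\Gamma$ is invisible, while left translation by a lattice element is a nontrivial map. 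Concretely, for the Heisenberg nilmanifold of Section \ref{heis} and the morphism $\phi(t)=\exp(t\log\gamma_1)\Gamma$ with $\gamma_1$ the integer matrix having $1$'s in both entries above the diagonal, the admissible lift $f_1(t)=\exp(tY)$ of the abelianized map with $Y$ the ``naive'' horizontal vector produces the correction $t\mapsto z(-t/2)\,\Gamma$ (where $z(s)$ is central with upper--right entry $s$), which is not $1$-periodic even though $f_1(1)\in F_2\Gamma$; and with the optimal choice $f_{k-j}(1)=\sigma\in\Gamma$ one still only gets $\gamma(t+1)=\sigma^{-1}\cdot\gamma(t)$, whose triviality would require commutators of $\sigma$ with the lifted correction (elements of $F_{k-j+2}$) to lie in $\Gamma$ --- automatic at the last level, which is why nothing goes wrong for $k=2$, but not in general for $k\geq 3$. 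So your inductive hypothesis, which is formulated for morphisms defined on $C$, cannot be applied to the corrected map.

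The cure is not to demand descent. Lift $\phi\circ\pi$ once through the covering $F\to N$ to a continuous $\tilde\phi:\mathbb{R}\to F_{k-j}$ with $\tilde\phi(0)=e$; then $\tilde\phi(t+1)=\tilde\phi(t)\sigma$ for a fixed $\sigma\in\Gamma\cap F_{k-j}$, and the induction should be run on such right--quasi--periodic lifts (equivalently on lifts of morphisms, with the monodromy carried along), not on morphisms of $C$. The point that saves Lemma \ref{lin} is that $\sigma$ and the elements $f_i(1)\in\Gamma$ accumulated by successive corrections project to the identity in every structure group $A_i=F_i\Gamma/F_{i+1}\Gamma$, so the projections of the corrected lifts to the $A_i$ \emph{are} genuinely $1$-periodic and Lemma \ref{lin} applies at each stage, while full periodicity of the corrections themselves is neither available nor needed. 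With that reformulation (and the choice $f_i(1)\in\Gamma$ via the logarithm of the monodromy) your argument goes through and is indeed the paper's ``same argument as Lemma \ref{felemelo} plus easy Lie theory''; as written, however, the descent claim is a genuine gap.
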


The main difference in the proof is that we have to use some easy Lie theory to show that one parameter subgroups in a factor of a nilpotent Lie-group can be lifted to one parameter subgroups.

Using this we also get an ``interval'' version of the Green-Tao-Ziegler theorem for the $U_{k+1}([0,1])$ norm.
Functions on the interval $[0,1]$ can be represented in a large enough Cyclic group say $\mathbb{R}/2^{k+1}\mathbb{Z}$.
We obtain that if $f:[0,1]\rightarrow\mathbb{C}$ is a measurable function with $|f|\leq 1$ and $\|f\|_{U_{k+1}}$ is separated from $0$ then $f$ correlates with a continuous bounded compexity polynomial nilsequence.

\subsection{Limits of functions on groups, (hyper)-graph limits and examples}

The limit notion for functions on abelian groups is closely tied to the graph and hypergraph limit languages \cite{LSz1},\cite{BCLSV1},\cite{LSz3},\cite{LSz4},\cite{ESz}.
Assume that $S$ is a subset in a finite abelian group $A$. We create a symmetric $k$-uniform hypergraph $H_k(S)$
on the vertex set $A$ with edge set
$$E=\{(x_1,x_2,\dots,x_k)|\sum_{i=1}^k x_i\in S\}.$$
We can look at these graphs as hypergraph versions of Cayley graphs.
Let $\{S_i\subseteq A_i\}_{i=1}^\infty$ be a sequence of subsets in abelian groups and let $1_{S_i}$ be the characteristic function of $S_i$. 
It is easy to see that if the function sequence $\{1_{S_i}\}_{i=1}^\infty$ is convergent then the hypergraph sequence $\{H_k(S_i)\}_{i=1}^\infty$ is also convergent in the sense of \cite{ESz} for every fixed $k$.
Without going into the details we mention that the limit object of $\{H_k(S_i)\}_{i=1}^\infty$ can be easily constructed from the limit object for $\{1_{S_i}\}_{i=1}^\infty$. 

The case $k=2$ is already interesting. The graph $H_2(S)$ is an undirected Cayley graph in which two points $x$ and $y$ in $A$ are connected if $x+y\in S$. Note that according to the original definition of a Cayley graph, $x$ and $y$ are connected by a directed edge if $x-y\in S$. Let us denote this version by $C(S)$. 
Assume that $f_{S_i}$ is convergent. It follows from our method that the limit object is a measurable function $f:A\rightarrow [0,1]$ on a compact abelian group $A$. It is clear that the two variable function $W(x,y)=f(x+y)$ represents the limit of the sequence $\{H_2(S_i)\}_{i=1}^\infty$ in the sense of \cite{LSz1}. Similarly $W'(x,y)=f(x-y)$ represent the limit of $\{C(S_i)\}_{i=1}^\infty$.
Note that the limit of Cayley graphs in the non-commutative case is determined in \cite{Sz4}.

We show two interesting examples.

\noindent{\bf Example 1.}~Recall that $e(x)=e^{x2\pi i}$. Let $f_n:\mathbb{Z}_n\rightarrow\mathbb{C}$ be the function defined by $f(k)=e(k/n)+e(a_nk/n)$ such that $a_n$ is a ``generic enough'' residue class modulo $n$. This means that if $m_1,m_2$ are any integers not both zero and smaller than $t_n$ in the absolute value then $m_1a_n+m_2$ is not $0$ modulo $n$. We assume that $a_n$ is chosen so that $\lim_{n\to\infty}t_n=\infty$.
It can be proved that $\{f_n\}_{n=1}^\infty$ is a convergent sequence and the limit is the function on the two dimensional torus $f:(\mathbb{R}/\mathbb{Z})^2\rightarrow\mathbb{C}$ defined by $f(x,y)=e(x)+e(y)$.

\medskip

\noindent{\bf Example 2.}~Let $0<\alpha<1$ be an irrational number. We define $f_n:\mathbb{Z}_n\rightarrow\mathbb{C}$ as the function $k\rightarrow \lambda^{k^2}$ where $\lambda=e(\lfloor\alpha n\rfloor/n^2)$ and $k=0,1,2,\dots,n-1$.
The limit object is the measurable function $g$ on the Heisenberg nilmanifold defined in chapter \ref{heis}

\section{The theory of nilspace factors}

\subsection{Nilspaces, abelian bundles and three-cubes}

Every abelian group $A$ has a natural nilspace structure in which cubes are maps $f:\{0,1\}^n\rightarrow A$ which extend to affine homomorphisms $f':\mathbb{Z}^n\rightarrow A$. We refer to this as the linear structure on $A$. For every natural number $k$ we can define a nilspace structure $\mathcal{D}_k(A)$ on $A$ that we call the $k$-degree structure.
A function $f:\{0,1\}^n\rightarrow A$ is in $C^n(\mathcal{D}_k(A))$ if for every cube morphism $\phi:\{0,1\}^{k+1}\rightarrow\{0,1\}^n$ we have that
$$\sum_{v\in\{0,1\}^{k+1}}f(\phi(v))(-1)^{h(v)}=0$$
where $h(v)=\sum_{i=1}^{k+1}v_i$.
It was shown in \cite{NP} that higher degree abelian groups are building blocks of $k$-step nilspaces.
To state the precise statement we will need the following formalism.

Let $A$ be an abelian group and $X$ be an arbitrary set. An $A$ bundle over $X$ is a set $B$ together with a free action of $A$ such that the orbits of $A$ are parametrized by the elements of $X$. This means that there is a projection map $\pi:B\rightarrow X$ such that every fibre is an $A$-orbit. 
The action of $a\in A$ on $x\in B$ is denoted by $x+a$. Note that if $x,y\in B$ are in the same $A$ orbit then it make sense to talk about the difference $x-y$ which is the unique element $a\in A$ with $y+a=x$. In other words the $A$ orbits can be regarded as affine copies of $A$.

A $k$-fold abelian bundle $X_k$ is a structure which is obtained from a one element set $X_0$ in $k$-steps in a way that in the $i$-th step we produce $X_i$ as an $A_i$ bundle over $X_{i-1}$.
The groups $A_i$ are the structure groups of the $k$-fold bundle. We call the spaces $X_i$ the $i$-th factors. 
If all the structure groups $A_i$ and spaces $X_i$ are compact and the actions continuous then the $k$-fold bundle admits a probability measure which is built up from the Haar measures of the structure groups. For details see \cite{NP}.

\begin{definition} Let $X_k$ be a $k$-fold abelian bundle with factors $\{X_i\}_{i=1}^k$ and structure groups $\{A_i\}_{i=1}^k$. Let $\pi_i$ denote the projection of $X_k$ to $X_i$.
Assume that $X_k$ admits a cubespace structure with cube sets $\{C^n(X_k)\}_{n=1}^\infty$.
We say that $X_k$ is a {\bf $k$-degree bundle} if it satisfies the following conditions
\begin{enumerate}
\item $X_{k-1}$ is a $k-1$ degree bundle.
\item Every function $f\in C^n(X_{k-1})$ can be lifted to $f'\in C^n(X_k)$ with $f'\circ\pi_{k-1}=f$.
\item If $f\in C^n(X_k)$ then the fibre of $\pi_{k-1}:C^n(X_k)\rightarrow C^n(X_{k-1})$ containing $f$ is  $$\{f+g|g\in C^n(\mathcal{D}_k(A_k))\}.$$ 
\end{enumerate} 
\end{definition}

The next theorem form \cite{NP} says that $k$-degree bundles are the same as $k$-step nilspaces.

\begin{theorem} Every $k$-degree bundle is a $k$-step nilspace and every $k$-step nilspace arises as a $k$-degree bundle. 
\end{theorem}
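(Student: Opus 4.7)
The plan is to prove both directions by induction on $k$, with the base case $k=0$ (one-point space) trivial in both directions.

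For the direction that a $k$-degree bundle $X_k$ is a $k$-step nilspace, I would verify the three nilspace axioms and the uniqueness clause one by one, using the inductive hypothesis on $X_{k-1}$ together with the fact that $\mathcal{D}_k(A_k)$ is itself a nilspace. For composition: given $f\in C^m(X_k)$ and a morphism $\psi:\{0,1\}^n\to\{0,1\}^m$, observe that $\pi_{k-1}\circ f\circ\psi\in C^n(X_{k-1})$ by induction, and if $f'\in C^m(X_k)$ is a chosen lift then $f-f'\in C^m(\mathcal{D}_k(A_k))$, so that $(f-f')\circ\psi\in C^n(\mathcal{D}_k(A_k))$; hence $f\circ\psi$ lies in a fibre of $\pi_{k-1}$ that belongs to $C^n(X_k)$ by axiom (3) of a $k$-degree bundle. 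Ergodicity follows because $C^1(\mathcal{D}_k(A_k))=A_k^{\{0,1\}}$, so the fibre over any element of $C^1(X_{k-1})=X_{k-1}^{\{0,1\}}$ exhausts the set of all lifts. For gluing with $n\leq k+1$, project the partial cube to $X_{k-1}$, extend by the inductive gluing axiom, then lift the extension using axiom (2) and correct the value at $1^n$ by an element of $A_k$; the freedom to do so is precisely captured by the condition that an $A_k$-valued function supported at $1^n$ lies in $C^n(\mathcal{D}_k(A_k))$, which for $n\leq k+1$ is always possible. Uniqueness at $n=k+1$ follows because two extensions differ by a cube $g\in C^{k+1}(\mathcal{D}_k(A_k))$ supported only at $1^{k+1}$, and the defining equation $\sum_v g(v)(-1)^{h(v)}=0$ with $\phi=\mathrm{id}$ forces $g(1^{k+1})=0$.

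For the reverse direction (nilspace $\Rightarrow$ bundle), the issue is to construct the top structure group $A_k$ intrinsically. The strategy, following \cite{NP}, is to consider the relation on $N$ that identifies two points $x,y$ whenever they can be placed as vertices of a $(k+1)$-cube agreeing at every other vertex; uniqueness of extension in a $k$-step nilspace forces this relation to be trivial for cubes of dimension $\leq k$ but nontrivial here, and the pairs $(x,y)$ become a free action of an abelian group $A_k$, the group of ``translations of the top factor''. Dividing by $A_k$ produces the space $X_{k-1}$, which I would show is a $(k-1)$-step nilspace via the induced cube structure (cubes in $X_{k-1}$ are images of cubes in $N$), and then invoke the inductive hypothesis to conclude it is a $(k-1)$-degree bundle. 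Verifying that $N\to X_{k-1}$ realises $N$ as an $A_k$-bundle reduces to checking axioms (2) and (3): lifting is the gluing axiom applied in $N$, and the description of fibres uses the computation that two lifts differ by an $A_k$-valued cube whose alternating sums over $(k+1)$-subcubes vanish, which is exactly the definition of $C^n(\mathcal{D}_k(A_k))$.

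The main obstacle is the reverse direction: identifying $A_k$ as an abelian group acting freely and transitively on fibres is not automatic from the axioms, and requires a careful argument that the ``vertex-swap'' relation actually closes up under composition into a group and that this group is abelian. This is precisely the part where one must exploit the uniqueness of extension in $k$-step nilspaces (to show well-definedness of the difference operation) together with a symmetry argument on $(k+1)$-cubes (to show commutativity); I would reproduce this essentially as carried out in \cite{NP} rather than attempt a novel argument.
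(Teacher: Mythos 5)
The paper itself does not prove this theorem: it is quoted from \cite{NP} ("The next theorem from \cite{NP} says..."), so for the reverse direction -- constructing the top structure group $A_k$ as translations, proving freeness and commutativity -- your decision to defer to \cite{NP} coincides with the paper's own treatment, and there is nothing to compare there. Note also that the composition axiom does not need verification at all: the definition of a $k$-degree bundle already assumes that $X_k$ carries a cubespace structure, so axiom (1) is built in, and your argument for it is redundant (and, as written, slightly circular, since it presupposes that the chosen lift $f'$ composed with $\psi$ is a cube of $X_k$).

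The genuine gap is in your gluing argument for the forward direction. Your claim that an $A_k$-valued function supported at $1^n$ lies in $C^n(\mathcal{D}_k(A_k))$ for every $n\leq k+1$ is false exactly at $n=k+1$: for the identity morphism $\{0,1\}^{k+1}\rightarrow\{0,1\}^{k+1}$ the alternating sum reduces to $\pm g(1^{k+1})$, so only the zero function qualifies -- which is precisely what your own uniqueness argument uses -- and hence the completion argument collapses at the one dimension that matters most for a $k$-step nilspace. Moreover, correcting the lift only at the vertex $1^n$ cannot work even for $n\leq k$: axiom (2) supplies \emph{some} lift $F'$ of the completed cube in $X_{k-1}$, and $F'$ differs from the prescribed corner $f$ by an $A_k$-valued function on all of $\{0,1\}^n\setminus\{1^n\}$, not merely at $1^n$. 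The repair is standard but must be said: by axiom (3) in dimension $n-1$, the difference restricted to each $(n-1)$-face through $0^n$ lies in $C^{n-1}(\mathcal{D}_k(A_k))$; then the corner-completion property of $\mathcal{D}_k(A_k)$ -- i.e. the fact, which you invoke but which itself requires proof, that $\mathcal{D}_k(A_k)$ is a ($k$-step) nilspace -- extends this difference to an element of $C^n(\mathcal{D}_k(A_k))$, and adding it to $F'$ produces the desired completion of $f$ via axiom (3). Two further omissions: the gluing axiom must be verified for every $n$, not only $n\leq k+1$; and your uniqueness argument at $n=k+1$ tacitly assumes that the two completions have the same projection to $X_{k-1}$, which requires uniqueness of completion of $(k+1)$-dimensional corners in the $(k-1)$-step nilspace $X_{k-1}$ -- true, but it needs to be stated and justified.
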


It will be important that if $N$ is a $k$-step nilspace then the set $C^n(N)$ admits a $k$-fold bundle structure and so if $N$ is compact then $C^n(N)$ has a natural probability measure on it.
We will need a generalization of this probability measure.
Let $C$ be an arbitrary cube space with a given subset $S\subset C$ and function $f:S\rightarrow N$. We define $\Hom_f(C,N)$ as the set of morphisms whose restrictions to $S$ is $f$.
In many cases we can define a natural probability measure on $\Hom_f(C,N)$. A more detailed discussion of this can be found in \cite{NP}.

Now we define a sequence of special cube spaces (called three-cubes) which turn out to be helpful in this topic (see also \cite{NP} where three-cubes play a crucial role.)
Let $T_n=\{-1,0,1\}^n$ together with the following cubespace structure.
For every $v\in\{0,1\}^n$ we define the injective map $\Phi_v:\{0,1\}^n\rightarrow T_n$ by
$$\Phi_v(w_1,w_2,\dots,w_i)_j=(1-2v_j)(1-w_j).$$
We consider the smallest cubespace structure on $T_n$ in which all the maps $\Phi_v$ are morphisms.
We will also need the embedding $\omega:\{0,1\}^n\rightarrow T_n$ defined by $\omega(v)=\Phi_v(0^n)$.
It was proved in \cite{NP} that if $\psi\in\Hom(T_n,N)$ then $\omega\circ\psi\in C^n(N)$.
Let $f:\omega(\{0,1\}^n)\rightarrow N$ be a function. Then, according to \cite{NP}, we have that $\Hom_f(T_n,N)$ has a natural probability space structure provided that $N$ is a $k$-step compact nilspace.
An important property of this probability space is that a random $\phi\in\Hom_f(T_n,N)$ composed with $\Phi_v$ is a random morphism of $\{0,1\}^n$ to $N$ with the restriction that the image of $0^n$ is $f(\omega(v))$.
We call $T_n$ the {\bf three-cube} of dimension $n$.

\bigskip

\subsection{Cocycles}

Let $N$ be a nilspace. We say that two cubes $f_1:\{0,1\}^k\rightarrow N$ and $f_2:\{0,1\}^k\rightarrow N$ in $C^k(N)$ are adjacent if they satisfy that $f_1(v,1)=f_2(v,0)$ for every $v\in\{0,1\}^{k-1}$. For such cubes we define their concatenation as the function $f_3:\{0,1\}^k\rightarrow N$ with $f_3(v,0)=f_1(v,0)$ and $f_3(v,1)=f_2(v,1)$ for every $v\in\{0,1\}^{k-1}$.
The nilspace axioms imply that $f_3\in C^k(N)$.

If $\sigma$ is an automorphism of the cube $\{0,1\}^k$ then the we define $s(\sigma):=(-1)^m$ where $m$ is the number of $1$'s in the vector $\sigma(0^k)$. The automorphism $\sigma$ also acts on $C^k(N)$ by composition.

\begin{definition}\label{cocycle} Let $N$ be a nilspace and $G$ be an abelian group. A cocycle of degree $k-1$ is a function $\rho:C^k(N)\rightarrow G$ with the following two properties. 
\begin{enumerate}
\item If $f\in C^k(N)$ and $\sigma\in{\rm aut}(\{0,1\}^k)$ then $\rho(\sigma(f))=s(\sigma)\rho(f)$.
\item If $f_3$ is the concatenation of two cubes $f_1,f_2\in C^k(N)$ then $\rho(f_3)=\rho(f_1)+\rho(f_2)$. 
\end{enumerate}
\end{definition}

\begin{remark} We will also work with cocycles whose values are naturally represented on the unit circle in $\mathbb{C}$ with multiplication as group operation. To avoid confusion, we will call such cocycles {\bf multiplicative cocycles}.
\end{remark}

From now on we will always assume that $N$ is a compact $n$-step nilspace and $G$ is a compact abelian group. We will only consider measurable cocycles on $N$. It is proved in \cite{NP} that a measurable cocycle defines a continuous extension of $N$ by $G$. We review the construction of this nilspace.

For every $x\in N$ we introduce the space $$C_x^k(N)=\Hom_{0^k\mapsto x}(\{0,1\}^k,N).$$
The spaces $C_x^k(N)$ are the fibres of the map $\zeta:C^k(N)\rightarrow N$ defined by $\zeta(f)=f(0^k)$.
Each space $C_x^k(N)$ is a $k$-fold abelian bundle (see \cite{NP}) and consequently has its own probability space structure. We denote the probability measure on $C_x^k(N)$ by $\mu_x$. The measures $\{\mu_x\}_{x\in N}$ are forming a continuous system of measures (CSM). This means that for every continuous function $h:C^k(N)\rightarrow\mathbb{C}$ the function $$x\mapsto\int_{y\in C_x^k(N)}h(y)~d\mu_x$$ is continuous on $N$.
A good reference for CSM's is \cite{AD}. For a compact probability space $X$ and compact space $Y$ we denote the set of $Y$-valued measurable functions (up to $0$ measure change) by $L(X,Y)$.
For us it will be important that the function spaces $L(C_x^k(N),G)$ are also connected in a continuous way. Let $\mathcal{L}_k(N,G)=\cup_x L(C_x^k(N),G)$. The projection $\pi:\mathcal{L}_k(N,G)\rightarrow N$ is defined by $\pi(f)=x$ if $f\in L(C_x^k(N),G)$. We define the topology on $\mathcal{L}_k(N,G)$ as the weakest topology in which the following functions are continuous:
\begin{equation}\label{topdef}
f\mapsto \int_{y\in C_{\pi(f)}^k(N)}F_1(f(y))F_2(y)~d\mu_{\pi(f)}
\end{equation}
where $F_1:G\rightarrow\mathbb{C}$ and $F_2:C^k(N)\rightarrow\mathbb{C}$ are continuous functions. With this topology $\mathcal{L}_k(N,G)$ becomes a Polish space.

Let $\rho:C^k(N)\rightarrow G$ be a measurable function. We denote by $\rho_x$ its restriction to $C^k_x(N)$.
For a function in $f\in L(C_x^k(N),G)$ we denote the function set $\{f+a|a\in G\}$ by $f+G$.
We have the following \cite{NP}.

\begin{proposition}\label{meascont} Let $\rho:C^k(N)\rightarrow G$ be a measurable cocycle of degree $k-1$. Then $$M=\bigcup_{x\in N}\{\rho_x+G\}\subset\mathcal{L}_k(N,G)$$ is a compact $G$ bundle over $N$ with projection $\pi$.  
\end{proposition}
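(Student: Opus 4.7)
The plan is to exhibit the natural parametrization $\iota:N\times G\to\mathcal{L}_k(N,G)$, defined by $\iota(x,a)=\rho_x+a$, as a continuous $G$-equivariant bijection onto $M$, and to read off the bundle structure from this parametrization. The image of $\iota$ is literally $M$; injectivity follows from the facts that $\pi$ recovers the first coordinate $x$ and that, within a single fibre, the translation action of the Hausdorff group $G$ on the coset $\rho_x+G\subset L(C_x^k(N),G)$ is free. Once continuity of $\iota$ is established, compactness of $N\times G$ together with Hausdorffness of the Polish target $\mathcal{L}_k(N,G)$ upgrades $\iota$ to a homeomorphism onto its image, so $M$ is compact; and the principal $G$-bundle structure transports automatically from the trivial bundle $N\times G$, with continuous free $G$-action by translation and orbit space $N$ via $\pi$.

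By the defining property (\ref{topdef}) of the topology on $\mathcal{L}_k(N,G)$, continuity of $\iota$ reduces to continuity of
$$H(x,a)\;:=\;\int_{y\in C_x^k(N)}F_1(\rho(y)+a)\,F_2(y)\,d\mu_x(y)$$
on $N\times G$ for every pair of continuous $F_1:G\to\mathbb{C}$ and $F_2:C^k(N)\to\mathbb{C}$. Continuity in $a$, uniform in $x$, is immediate from uniform continuity of $F_1$ on the compact group $G$ together with $\|F_2\|_\infty<\infty$. The real content is continuity in $x$ at fixed $a$, where the main obstacle is that $\rho$ is only measurable, while the CSM property of $\{\mu_x\}$ provides continuity of $x\mapsto\int h\,d\mu_x$ only for continuous integrands $h$.

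My approach to this obstacle has two steps. First, I use the $k$-fold abelian bundle structure on $C^k(N)$ recalled in Section 2.1 to equip it with its canonical global probability measure $\mu_{C^k(N)}$, which disintegrates as $\int\mu_x\,d\mu_N(x)$; Lusin's theorem on the compact probability space $C^k(N)\times G$ then furnishes, for every $\varepsilon>0$, a continuous approximant $\tilde h_\varepsilon$ of $(y,a)\mapsto F_1(\rho(y)+a)F_2(y)$ in $L^1(\mu_{C^k(N)}\otimes\mu_G)$, and the CSM property applied to $\tilde h_\varepsilon$ combined with Fubini yields continuity of $H$ on a $\mu_N$-conull set of $x$. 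Second, and this is the technical heart of the argument, I propagate continuity from this conull set to every point of $N$ by invoking the cocycle axioms of Definition \ref{cocycle} together with the three-cube spaces $T_n$: any two points $x,x'\in N$ can be realized as $0^n$-images of morphisms $T_n\to N$ whose restrictions to $\omega(\{0,1\}^n)$ are related by a controllable gluing, and the additive cocycle property of degree $k-1$ forces $\rho_x$ and $\rho_{x'}$ to differ, on the pushed-forward fibres, by a $G$-translation that depends continuously on these gluing parameters. Consequently the discrepancy $H(x,a)-H(x',a)$ can be absorbed into a continuous shift of the $a$-variable, so continuity on a dense subset extends to all of $N\times G$. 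With $\iota$ now continuous, the compactness and principal $G$-bundle assertions follow as sketched in the first paragraph.
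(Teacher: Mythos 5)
Your argument has a fatal structural flaw: it proves too much. If the map $\iota(x,a)=\rho_x+a$ were continuous on $N\times G$, then, exactly as you say, it would be a continuous bijection from a compact space onto a subset of the Hausdorff space $\mathcal{L}_k(N,G)$, hence a $G$-equivariant homeomorphism over $N$; that is, $M\rightarrow N$ would be a \emph{trivial} $G$-bundle, with $x\mapsto\rho_x$ a global continuous section. But Proposition \ref{meascont} is applied precisely to measurable cocycles whose associated extensions are topologically nontrivial: for instance the Heisenberg nilmanifold of Section \ref{heis}, viewed as a degree-two extension of the two-torus by the circle, arises from such a measurable cocycle, and it is not homeomorphic to $\mathbb{T}^2\times(\mathbb{R}/\mathbb{Z})$. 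So the continuity of $\iota$ that your whole plan rests on is false in general, and any correct proof must establish compactness of $M$ without producing a continuous section; this is why $M$ is defined as a union of \emph{entire} $G$-cosets $\rho_x+G$ (it is the coset, not any individual representative, that varies continuously in the topology of (\ref{topdef})). In the paper this statement is in fact quoted from \cite{NP} rather than proved, and the construction there never asserts continuity of $x\mapsto\rho_x$.

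Concretely, the step that cannot be repaired is your ``propagation'' argument. The cocycle identities of Definition \ref{cocycle} relate values of $\rho$ on concatenated cubes and on three-cube configurations, but $\rho$ is only measurable, so such comparisons between $\rho_x$ and $\rho_{x'}$ hold only for almost every choice of the connecting configuration and produce a $G$-translation that is defined and controlled only almost everywhere; they cannot yield pointwise continuity at every $x$, and indeed no almost-everywhere information can ever be upgraded to a continuous global trivialization when the bundle is nontrivial. (A secondary inaccuracy: the Lusin-plus-CSM step gives an $L^1(\mu_N)$-type approximation of $H(\cdot,a)$ by continuous functions, i.e.\ measurability and continuity after restriction to large compact subsets of $N$, not continuity at each point of a conull set; but even granting that, the second step is the one that must fail.) A viable route has to show directly that $M$ is closed in a compact part of $\mathcal{L}_k(N,G)$ and that the fibrewise $G$-action and the projection $\pi$ are continuous, using the fact that the functionals (\ref{topdef}) see only the coset $\rho_x+G$ through integrals against $\mu_x$, which is how the argument in \cite{NP} proceeds.
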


Now we define cubes of dimension $k$ on the compact topological space $M$.
Let $f:\{0,1\}^k\rightarrow M$ be a function. We have for every $v\in\{0,1\}^k$ that $\rho_{\pi(f(v))}=f(v)+a(v)$ for some element $a(v)$ in $G$. We say that $f$ is in $C^k(M)$ if $f\circ\pi\in C^k(N)$ and
$$\sum_{v\in\{0,1\}^k}a(v)(-1)^{h(v)}=\rho(f\circ\pi).$$ 
In general $f$ is in $C^n(M)$ if $f\circ\pi\in C^n(N)$ and every $k$-dimensional face restriction of $f$ is in $C^k(M)$.
Easy calculation shows that this defines a continuous nilspace structure on $M$.

It will be useful to specify a system of continuous functions $f_i:M\rightarrow\mathbb{C}$ which generate the topology on $M$.
For $v\in\{0,1\}^k$ we denote by $\psi_v:C^k(N)\rightarrow N$ the function with $\psi_v(c)=c(v)$ where $c:\{0,1\}^k\rightarrow N$ is in $C^k(N)$.  
Let $\mathcal{Q}$ denote the set of functions on $C^k(N)$ of the form $\prod_{v\in\{0,1\}^k}\psi_v\circ g_v$ where $\{g_v\}_{v\in\{0,1\}^k}$ is a system of continuous functions on $N$. It is clear that $\mathcal{Q}$ is a separating system of continuous functions closed under multiplication. By the Stone-Weierstrass theorem every continuous functions on $C^k(N)$ can be approximated by a finite linear combination of functions from $\mathcal{Q}$ in $L^\infty$.
The linear characters on $G$ are also forming a separating set of functions closed under multiplication.
We obtain the next lemma.

\begin{lemma}\label{topgen} The functions in (\ref{topdef}) where $F_1$ is a linear character of $G$ and $F_2$ is in $\mathcal{Q}$ generate the topology on $M$.
\end{lemma}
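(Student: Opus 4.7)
The plan is to show that the a priori larger family of generators (with arbitrary continuous $F_1$ on $G$ and arbitrary continuous $F_2$ on $C^k(N)$) can be uniformly approximated on $M$ by functions from the restricted family, and then invoke the fact that a uniform limit of continuous functions is continuous. The direction ``restricted topology is coarser than the full topology on $\mathcal{L}_k(N,G)$'' is immediate because the restricted generators are a subfamily of the defining ones, so all the content is in showing the reverse inclusion on $M$.

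First I would apply Stone--Weierstrass twice. Since $G$ is a compact Hausdorff abelian group, its linear characters form a conjugation-closed, point-separating algebra in $C(G)$, so any continuous $F_1:G\to\mathbb{C}$ is a uniform limit of finite linear combinations of characters. The paragraph just before the lemma already records the analogous fact for $F_2$: the family $\mathcal{Q}$ is a separating, multiplication-closed set of continuous functions on $C^k(N)$, so its linear span is uniformly dense in $C(C^k(N))$. Given any continuous $F_1,F_2$ and any $\epsilon>0$, I can therefore choose $F_1'=\sum_i c_i\chi_i$ (characters of $G$) and $F_2'=\sum_j d_j q_j$ (with $q_j\in\mathcal{Q}$) with $\|F_1-F_1'\|_\infty,\|F_2-F_2'\|_\infty<\epsilon$.

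Next, denote by $\Phi_{F_1,F_2}$ the function in (\ref{topdef}). Because each $\mu_x$ is a probability measure and $|F_1(g)|\le\|F_1\|_\infty$ pointwise, the elementary estimate
\[
\bigl|\Phi_{F_1,F_2}(f)-\Phi_{F_1',F_2'}(f)\bigr|\le\|F_1-F_1'\|_\infty\,\|F_2\|_\infty+\|F_1'\|_\infty\,\|F_2-F_2'\|_\infty
\]
holds uniformly in $f\in M$. Moreover, $\Phi_{F_1',F_2'}$ is, by the linearity of the integral in (\ref{topdef}), a finite linear combination of functions of the exact form covered by the restricted family (one term $c_id_j\Phi_{\chi_i,q_j}$ for each pair $(i,j)$); in particular it is continuous in the topology generated by the restricted family. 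Hence $\Phi_{F_1,F_2}$ is the uniform limit on $M$ of functions continuous in the restricted topology, so it is itself continuous in that topology.

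Finally, since every defining generator of the topology on $\mathcal{L}_k(N,G)$ restricts to a function on $M$ that is continuous with respect to the restricted topology, the restricted family is already topology-generating on $M$. The only technical point to be careful about is that the uniform bound needs $\mu_x$ to be a probability measure depending continuously on $x$ and $F_1,F_2$ bounded on compact spaces, both of which are given (the continuous system of measures is recalled earlier, and $G$, $C^k(N)$ are compact); once this is observed the argument is completely routine, so no real obstacle is expected.
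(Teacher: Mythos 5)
Your proof is correct and follows essentially the same route as the paper: the paper's justification is precisely the Stone--Weierstrass density of the span of $\mathcal{Q}$ in $C(C^k(N))$ and of the span of the characters in $C(G)$, from which the lemma is stated as immediate. Your write-up merely makes explicit the bilinearity of (\ref{topdef}) and the uniform $\epsilon$-estimate that the paper leaves implicit.
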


\subsection{Ultra product spaces}

Let $\omega$ be a non principal ultra filter on the natural numbers. 
Let $\{X_i,B_i,\mu_i\}_{i=1}^\infty$ be triples where $X_i$ is a compact Hausdorff space, $B_i$ is the Borel $\sigma$-algebra on $X_i$ and $\mu_i$ is a Borel probability measure on $B_i$.
We denote by $\bX$ the ultra product space $\prod_{\omega}X_i$.
The space $\bX$ has the following important structures on it.

\medskip

\noindent{\bf Strongly open sets} ~We call a subset of $\bX$ strongly open if it is the ultra product of open sets in $X_i$.

\medskip

\noindent{\bf Open sets:}~We say that $S\subset \bX$ is open if it is a countable union of strongly open sets. Open sets on $\bX$ give a $\sigma$-topology. This is similar to a topology but only countable unions of open sets are assumed to be open. Finite intersections and countable unions of open sets are again open. It can be proved that $\bX$ with this $\sigma$-topology is countably compact. If $\bX$ is covered by countably many open sets then there is a finite sub-system which covers $\bX$.

\medskip

\noindent{\bf Borel sets:} A subset of $\bX$ is called Borel if it is in the $\sigma$-algebra generated by strongly open sets. We denote by $\mathcal{A}(\bX)$ the $\sigma$ algebra of Borel sets.

\medskip

\noindent{\bf Ultra limit measure:}  If $S\subseteq \bX$ is a strongly open set of the form $S=\prod_\omega S_i$ then we define $\mu(S)$ as $\lim_\omega\mu_i(S_i)$. It is well known that $\mu$ extends as a probability measure to the $\sigma$-algebra of Borel sets on $\bX$. 

\medskip

\noindent{\bf Continuity:} A function $f:X\rightarrow T$ from $\bX$ to a topological space $T$ is called continuous if $f^{-1}(U)$ is open in $\bX$ for every open set in $T$. If $T$ is a compact Hausdorff topological space then $f$ is continuous if and only if it is the ultra limit of continuous functions $f_i:X_i\rightarrow T$. Furthermore the image of $X$ in a compact Hausdorff space $T$ under a continuous map is compact.

The fact that an ultra product space is endowed only with a $\sigma$-topology and not a topology might be upsetting for the first look. However one can look at $\bX$ as a space which is glued together form many ``ordinary'' topological spaces. Indeed, if we choose countably many open sets in $\bX$ then their finite intersections are forming a basis for a compact topological space. We will call such spaces {\bf separable topological factors} of $\bX$.

A natural way to define separable topological factors uses collections of continuous functions of the form $f:\bX\rightarrow\mathbb{C}$. If $\mathcal{F}$ is a set of continuous functions on $X$ then there is a smallest $\sigma$-topology in which all of them are continuous. This space will be called the $\sigma$-topology generated by $\mathcal{F}$. It turns out that if $\mathcal{F}$ is countable then they generate a separable topological factor.
This follows from the fact that the same $\sigma$-topology is generated by the pre-images of open balls in $\mathbb{C}$ with rational center and rational radius. The topological factor generated by a countable system of continuous functions is a compact second countable Hausdorff space. 

\subsection{Corner convolution}

In this chapter $A$ is either a compact Hausdorff group with the Haar measure or the ultra product of such groups.
If $f:A\rightarrow\mathbb{C}$ then we define the function $\Delta_t f$ by $(\Delta_t f)(x)=f(x)\overline{f(x+t)}$.
The Gowers norm $\|f\|_{U_k}$ is defined by
$$\|f\|_{U_k}^{2^k}=\mathbb{E}_{x,t_1,t_2,\dots,t_k}\Delta_{t_1,t_2,\dots,t_k}f(x)$$
for $f\in L^\infty(A)$.

Let $F=\{f_S\}_{S\subseteq [k]}$ be a system of $L^\infty$ functions on $A$.
The Gowers inner product of $F$ is defined by 
$$(F)=\mathbb{E}_{x,t_1,t_2,\dots,t_k}\prod_{S\subseteq [k]}f_S^{\epsilon(S)}(x+\sum_{i\in S}t_i)$$
where $\epsilon(S)$ is the conjugation if $|S|$ is odd and is the identity if $|S|$ is even.

The so-called Gowers-Cauchy-Schwartz inequality says that with the above notation
\begin{equation}\label{GCS}
(F)\leq\prod_{S\subseteq [k]}\|f_S\|_{U_k}.
\end{equation}

Let $K_n=2^{[n]}\setminus\{\emptyset\}$ denote the collection of non-empty subsets of $[n]=\{1,2,\dots,n\}$.
We can also identify $K_n$ with $\{0,1\}^n\setminus\{0\}$ where $0$ is the short hand notation for $0^n$.
For a function system $F=\{f_S\}_{S\in K_n}$ with $f_S\in L_\infty(A)$ we define the one variable function
\begin{equation}\label{cornerconv}
\mathcal{K}_n(F)(x)=\mathbb{E}_{t_1,t_2,\dots,t_n}\Bigl(\prod_{S\in K_n}f^{\epsilon(S)}_S(x+\sum_{i\in S}t_i)\Bigr).
\end{equation}

\begin{lemma} If $k\geq 2$ then $\|f\|_{U_k}\leq\|f\|_{2^{k-1}}$.
\end{lemma}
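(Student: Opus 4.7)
The natural approach is induction on $k \geq 2$, using the recursive identity $\|f\|_{U_k}^{2^k} = \mathbb{E}_t \|\Delta_t f\|_{U_{k-1}}^{2^{k-1}}$ that follows directly from unwinding the definition of the Gowers norm given in the paper.

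For the base case $k=2$, write $\|f\|_{U_2}^4 = \mathbb{E}_t |\mathbb{E}_x f(x)\overline{f(x+t)}|^2$. Cauchy--Schwarz in the inner integral, combined with translation invariance of the Haar (or ultralimit) measure, bounds the squared inner product by $\|f\|_2^2 \|f\|_2^2 = \|f\|_2^4$, yielding $\|f\|_{U_2} \leq \|f\|_2 = \|f\|_{2^{k-1}}$. (Alternatively one can invoke Parseval, via $\|f\|_{U_2}^4 = \sum_\chi |\hat f(\chi)|^4 \leq (\sum_\chi |\hat f(\chi)|^2)^2 = \|f\|_2^4$.)

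For the inductive step, apply the hypothesis to $\Delta_t f$ to get $\|\Delta_t f\|_{U_{k-1}} \leq \|\Delta_t f\|_{2^{k-2}}$. Substituting into the recursive identity gives
$$\|f\|_{U_k}^{2^k} \leq \mathbb{E}_t \Bigl(\mathbb{E}_x |f(x)|^{2^{k-2}} |f(x+t)|^{2^{k-2}}\Bigr)^2 = \mathbb{E}_{x,y,t} |f(x)|^{2^{k-2}} |f(y)|^{2^{k-2}} |f(x+t)|^{2^{k-2}} |f(y+t)|^{2^{k-2}}.$$
Now separate the $x,y$ variables from $t$. The inner expectation $\mathbb{E}_t |f(x+t)|^{2^{k-2}} |f(y+t)|^{2^{k-2}}$ is bounded by $\|f\|_{2^{k-1}}^{2^{k-1}}$ via Cauchy--Schwarz in $t$ together with translation invariance, while the residual factor $(\mathbb{E}_x |f(x)|^{2^{k-2}})^2 = \|f\|_{2^{k-2}}^{2^{k-1}}$ is in turn bounded by $\|f\|_{2^{k-1}}^{2^{k-1}}$ by Jensen or Hölder. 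Multiplying yields $\|f\|_{U_k}^{2^k} \leq \|f\|_{2^{k-1}}^{2^k}$, as required.

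The only real technicality is bookkeeping the exponents in the chained Cauchy--Schwarz and Hölder steps; all inequalities used are classical. A more conceptual alternative would be to apply the Gowers--Cauchy--Schwartz inequality $(\ref{GCS})$ with the function system $\{f_S\}_{S\subseteq[k]}$ set to $f_S = f$ when $|S|$ has a prescribed parity and $f_S \equiv 1$ otherwise, but the inductive argument above is the most transparent route and avoids the need to choose the system carefully.
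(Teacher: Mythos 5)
Your proof is correct and follows essentially the same route as the paper: induction on $k$ using the recursion $\|f\|_{U_k}^{2^k}=\mathbb{E}_t\|\Delta_t f\|_{U_{k-1}}^{2^{k-1}}$, Cauchy--Schwarz for the base case $k=2$, and the inductive hypothesis applied to $\Delta_t f$. The only difference is the final bookkeeping (you use Cauchy--Schwarz in $t$ plus monotonicity of $L^p$ norms, while the paper's chain amounts to Cauchy--Schwarz in $x$), and both give the same bound $\|f\|_{2^{k-1}}^{2^k}$.
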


\begin{proof} We prove the statement by induction. If $k=2$ then $$\|f\|_{U_2}^4=\int_{t_1}\Bigl|\int_x f(x)\overline{f(x+t_1)}\Bigr|^2\leq\|f\|_2^4.$$
Let $f_t$ denote the function with $f_t(x)=f(x+t)$. We have by induction that
$$\|f\|_{U_{k+1}}^{2^{k+1}}=\int_{t}\|f\overline{f_t}\|_{U_k}^{2^k}\leq\int_t\|f\overline{f_t}\|_{2^{k-1}}^{2^k}=\int_{t,x}|f(x)|^{2^k}|f(x+t)|^{2^k}=\|f\|_{2^k}^{2^{k+1}}.$$
\end{proof}

\begin{corollary}\label{l2becs} If $k\geq 2$ and $|f|\leq 1$ then $(f,f)=\|f\|_2^2\geq \|f\|_{U_k}^{2^{k-1}}$.
\end{corollary}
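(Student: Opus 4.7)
The plan is to chain two elementary inequalities, using the preceding lemma as the first link. From that lemma I already have $\|f\|_{U_k}\leq\|f\|_{2^{k-1}}$ for $k\geq 2$, so raising both sides to the power $2^{k-1}$ gives
\[
\|f\|_{U_k}^{2^{k-1}}\;\leq\;\|f\|_{2^{k-1}}^{2^{k-1}}\;=\;\int_A |f(x)|^{2^{k-1}}\,d\mu(x).
\]

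Now I would use the boundedness hypothesis $|f|\leq 1$ together with $k\geq 2$. The exponent satisfies $2^{k-1}\geq 2$, so pointwise $|f(x)|^{2^{k-1}}\leq |f(x)|^2$ (a nonnegative number at most $1$ raised to a larger exponent only gets smaller). Integrating,
\[
\int_A |f(x)|^{2^{k-1}}\,d\mu(x)\;\leq\;\int_A |f(x)|^2\,d\mu(x)\;=\;\|f\|_2^2\;=\;(f,f).
\]
Chaining these two inequalities yields $\|f\|_{U_k}^{2^{k-1}}\leq (f,f)$, which is exactly the statement.

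There is essentially no obstacle here; the proof is a two-line consequence of the previous lemma and the monotonicity of $L^p$ norms on bounded functions with respect to a probability measure. The only thing worth double-checking is the base case $k=2$, where $2^{k-1}=2$ and the second inequality is an equality, so the corollary reduces directly to the previous lemma at $k=2$.
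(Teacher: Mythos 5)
Your proof is correct and is essentially identical to the paper's argument, which also chains $\|f\|_{U_k}^{2^{k-1}}\leq\|f\|_{2^{k-1}}^{2^{k-1}}\leq\|f\|_2^2$ using the previous lemma and the pointwise bound $|f|^{2^{k-1}}\leq|f|^2$ for $|f|\leq 1$. Nothing further is needed.
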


\begin{proof} If $|f|\leq 1$ then $\|f\|_{U_k}^{2^{k-1}}\leq\|f\|_{2^{k-1}}^{2^{k-1}}\leq\|f\|_2^2$.

\end{proof}

\begin{lemma}\label{cornineq} For every $j\in[n]$ we have that
$$|\mathcal{K}_n(F)(x)|\leq\prod_{S\in K_n,j\notin S}\|f_S\|_{2^{n-1}}\prod_{S\in K_n,j\in S}\|f_S\|_{U_n}.$$
\end{lemma}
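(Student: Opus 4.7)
Proof proposal. By the symmetry of the expression in $t_1,\ldots,t_n$, it suffices to treat the case $j=n$. The strategy is to split the $2^n-1$ factors in $\mathcal{K}_n(F)(x)$ according to whether $n\in S$, separate the two halves with a single Cauchy--Schwarz in $(t_1,\ldots,t_{n-1})$, and estimate each piece on its own. Perform first the Haar-invariant substitution $t_n\mapsto t_n-x$, which removes $x$ from every factor with $n\in S$. Then
\[
\mathcal{K}_n(F)(x)=\mathbb{E}_{t_1,\ldots,t_{n-1}}\bigl[A(x,t_1,\ldots,t_{n-1})\,B(t_1,\ldots,t_{n-1})\bigr],
\]
where $A=\prod_{S\in K_n,\,n\notin S}f_S^{\epsilon(S)}(x+\sum_{i\in S}t_i)$ and $B=\mathbb{E}_{t_n}\prod_{S\in K_n,\,n\in S}f_S^{\epsilon(S)}(t_n+\sum_{i\in S\setminus\{n\}}t_i)$. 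Cauchy--Schwarz in $L^2(t_1,\ldots,t_{n-1})$ then gives $|\mathcal{K}_n(F)(x)|\le\|A(x,\cdot)\|_2\,\|B\|_2$.

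For $\|A(x,\cdot)\|_2^2=\mathbb{E}_{t_1,\ldots,t_{n-1}}\prod_{\emptyset\ne S\subseteq[n-1]}|f_S|^2(x+\sum_{i\in S}t_i)$, I would argue by induction on $n$. The induction step isolates the variable $t_{n-1}$: the sub-product over $S\ni n-1$ (a product of $2^{n-2}$ factors of the form $|f_S|^2$) is bounded uniformly in the remaining $t_i$'s by H\"older with equal exponents $2^{n-2}$ on the probability space, giving $\prod_{n-1\in S}\||f_S|^2\|_{2^{n-2}}=\prod_{n-1\in S}\|f_S\|_{2^{n-1}}^2$; the sub-product over $S\not\ni n-1$ is then bounded by the inductive hypothesis together with the elementary monotonicity $\|\cdot\|_{2^{n-2}}\le\|\cdot\|_{2^{n-1}}$ available on a probability space. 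This yields $\|A(x,\cdot)\|_2\le\prod_{S\in K_n,\,n\notin S}\|f_S\|_{2^{n-1}}$.

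For $\|B\|_2^2$, I would expand by duplicating the $t_n$-variable to a second copy, setting $s$ equal to their difference, and keeping the original copy as the new base variable. Each function $f_S$ with $n\in S$ then appears exactly twice, once attached to a vertex $v\in\{0,1\}^n$ with $v_n=0$ and once with $v_n=1$; after carefully tracking how the original conjugation $\epsilon(S)$, the extra conjugation from $|B|^2$, and the parity shift between $v_n=0$ and $v_n=1$ combine, one identifies $\|B\|_2^2$ with the Gowers inner product of the $2^n$-tuple $\{\overline{f_{I(v)\cup\{n\}}}\}_{v\in\{0,1\}^n}$, where $I(v)=\{i:v_i=1\}$. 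Since the map $v\mapsto I(v)\cup\{n\}$ is $2$-to-$1$ onto $\{S\in K_n:n\in S\}$, the Gowers--Cauchy--Schwarz inequality (\ref{GCS}) yields $\|B\|_2^2\le\prod_{S\in K_n,\,n\in S}\|f_S\|_{U_n}^2$. Multiplying the two bounds finishes the proof.

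The main obstacle I anticipate is precisely the conjugation/sign bookkeeping in this last step: one must check that the $\epsilon(S)$'s inherited from the factors with $n\in S$, the extra complex conjugation from $|B|^2$, and the sign shift induced by switching $v_n$ between $0$ and $1$, all line up correctly with the $\epsilon'(v)$ pattern in the definition of the Gowers inner product. Once this combinatorial matching is verified, everything else is a direct Cauchy--Schwarz / H\"older estimate on a probability space.
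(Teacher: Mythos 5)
Your proposal is correct, but it takes a genuinely different route from the paper's. The paper proves Lemma \ref{cornineq} by induction on $n$: choosing $j\neq n$, it merges each pair $f_S,f_{S\cup\{n\}}$ (for $S\subseteq[n-1]$) into a single function $f_{S,t}(y)=\overline{f}_{S\cup\{n\}}(y+t)f_S(y)$, writes $\mathcal{K}_n(F)(x)=\mathbb{E}_t\,\mathcal{K}_{n-1}(F_t)(x)$, applies the inductive bound, H\"older in $t$, and then two comparison inequalities, the second of which invokes the Gowers--Cauchy--Schwarz inequality (\ref{GCS}) to pass from $\mathbb{E}_t\|f_{S,t}\|_{U_{n-1}}^{2^{n-1}}$ to $\|f_S\|_{U_n}\|f_{S\cup\{n\}}\|_{U_n}$. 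You instead normalize to $j=n$, decouple the factors with $n\in S$ from those with $n\notin S$ by a single Cauchy--Schwarz in $(t_1,\dots,t_{n-1})$, handle the $n\notin S$ block by an elementary positive H\"older induction (which is valid as you set it up: isolating one variable, H\"older with $2^{n-2}$ equal exponents, and norm monotonicity on a probability space), and recognize $\|B\|_2^2$, after doubling $t_n$, as one full Gowers inner product to which (\ref{GCS}) is applied once. The sign bookkeeping you flag does work out cleanly: in every case (both parities of $|I(v)\cap[n-1]|$ and both values of $v_n$) the function attached to the vertex $v$ is $\overline{f_{I(v)\cup\{n\}}}$, each $S\ni n$ occurs at exactly two vertices, and $\|\overline{f}\|_{U_n}=\|f\|_{U_n}$, so $\|B\|_2^2\le\prod_{S\ni n}\|f_S\|_{U_n}^2$ as claimed. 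What the two approaches buy: the paper's induction is uniform in $j$ and keeps everything inside the single quantity $\mathcal{K}_m$, at the cost of the two auxiliary comparison estimates; yours makes it transparent in one step where the $U_n$ norms come from and isolates the $L^{2^{n-1}}$ factors into a purely elementary estimate, at the cost of an initial symmetry reduction and a separate small induction for that block. Both arguments rely on the same Fubini-type manipulations on the ultraproduct group (justified in the paper via the non-standard Fubini theorem) and on translation invariance of the measure, so neither has an advantage there.
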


\begin{proof} If $n=1$ then the statement is true with equality. If $n>1$ then by induction we assume that it is true for $n-1$. Without loss of generality (using symmetry) we can assume that $j\neq n$. For every $S\in K_{n-1}$ we denote by $f_{S,t}$ the function $y\mapsto\overline{f}_{S\cup\{n\}}(y+t)f_S(y)$. Let $F_t=\{f_{S,t}\}_{S\in K_{n-1}}$. Then 
$$\mathcal{K}_n(F)(x)=\mathbb{E}_t(\mathcal{K}_{n-1}(F_t)(x))$$
and so by induction
$$|\mathcal{K}_n(F)(x)|\leq\mathbb{E}_t\Bigl(\prod_{S\in K_{n-1},j\notin S}\|f_{S,t}\|_{2^{n-2}}\prod_{S\in K_{n-1},j\in S}\|f_{S,t}\|_{U_{n-1}}\Bigr)\leq$$

$$\prod_{S\in K_{n-1},j\notin S}\mathbb{E}_t(\|f_{S,t}\|_{2^{n-2}}^{2^{n-1}})^{2^{-(n-1)}}\prod_{S\in K_{n-1},j\in S}\mathbb{E}_t(\|f_{S,t}\|_{U_{n-1}}^{2^{n-1}})^{2^{-(n-1)}}.$$
On the other hand we claim that
\begin{equation}\label{comp1}
\mathbb{E}_t(\|f_{S,t}\|_{2^{n-2}}^{2^{n-1}})^{2^{-(n-1)}}\leq\|f_S\|_{2^{n-1}}\|f_{S\cup\{n\}}\|_{2^{n-1}}
\end{equation}
and
\begin{equation}\label{comp2}
\mathbb{E}_t(\|f_{S,t}\|_{U_{n-1}}^{2^{n-1}})^{2^{-(n-1)}}\leq\|f_S\|_{U_n}\|f_{S\cup\{n\}}\|_{U_n}.
\end{equation}

Inequality (\ref{comp1}) follows by
$$\mathbb{E}_t(\|f_{S,t}\|_{2^{n-2}}^{2^{n-1}})=\mathbb{E}_t\Bigl(\mathbb{E}^2_y(|f_S(y)|^{2^{n-2}}|f_{S\cup\{n\}}(y+t)|^{2^{n-2}})\Bigr)\leq$$
$$\leq\mathbb{E}_{t,y}\Bigl(|f_S(y)|^{2^{n-1}}|f_{S\cup\{n\}}(y+t)|^{2^{n-1}}\Bigr)=\|f_S\|_{2^{n-1}}^{2^{n-1}}\|f_{S\cup\{n\}}\|_{2^{n-1}}^{2^{n-1}}.$$

To see (\ref{comp2}) let $G=\{g_S\}_{H\subseteq[n]}$ be the function system defined by $g_H=f_S$ if $H\subseteq[n-1]$ and $g_H=f_{S\cup\{n\}}$ if $n\in H$. Then by (\ref{GCS})
$$\mathbb{E}_t(\|f_{S,t}\|_{U_{n-1}}^{2^{n-1}})=(F)\leq\|f_S\|_{U_n}^{2^{n-1}}\|f_{S\cup\{n\}}\|_{U_n}^{2^{n-1}}$$
which completes the proof.
\end{proof}

\medskip

\begin{corollary}\label{cornineqcor} If $F=\{f_S\}_{S\in K_n}$ is an $L^\infty$ function system on an ultra product group $\bA$ and there exists an $S\in K_n$ with $\|f_S\|_{U_n}=0$ then $\mathcal{K}_n(F)(x)=0$ for every $x\in \bA$. Furthermore for an arbitrary function system $F=\{f_s\}_{S\in K_n}$ we have that if $F'=\{\mathbb{E}(f_S|\mathcal{F}_{n-1})\}_{S\in K_n}$ then $\mathcal{K}_n(F)(x)=\mathcal{K}_n(F')(x)$ holds for every $x\in\bA$.
\end{corollary}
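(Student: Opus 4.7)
My plan is to deduce both assertions directly from Lemma \ref{cornineq}, which already delivers a pointwise upper bound on $|\mathcal{K}_n(F)(x)|$ involving the $U_n$-norms of those $f_{S'}$ with $S' \ni j$, for any chosen $j \in [n]$. For the first claim, I would pick any index $j$ in the given set $S$ with $\|f_S\|_{U_n}=0$ (possible since $S \in K_n$ is nonempty) and apply Lemma \ref{cornineq} with this $j$; then the factor $\|f_S\|_{U_n}=0$ appears on the right-hand side and forces $\mathcal{K}_n(F)(x)=0$ for every $x$. The remaining factors $\|f_{S'}\|_{U_n}$ and $\|f_{S'}\|_{2^{n-1}}$ are finite because $\bA$ is a probability space and each $f_{S'}\in L^\infty(\bA)$ lies in every $L^p$, and $\|\cdot\|_{U_n}\leq\|\cdot\|_{2^{n-1}}$ by the lemma proved immediately above.

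For the second claim, the crucial input from the paper is the defining property of $\mathcal{F}_{n-1}$ recalled in the introduction: $\mathcal{F}_{n-1}$ is the maximal $\sigma$-algebra on $\bA$ on which $U_n$ is a genuine norm, hence the residual $r_f:=f-\mathbb{E}(f\mid\mathcal{F}_{n-1})$ has $\|r_f\|_{U_n}=0$ for every $f\in L^\infty(\bA)$. I would then proceed by a one-at-a-time swapping argument: enumerate $K_n=\{S_1,\dots,S_{2^n-1}\}$ and, at step $i$, replace $f_{S_i}$ by $\mathbb{E}(f_{S_i}\mid\mathcal{F}_{n-1})$. Since $\mathcal{K}_n$ is separately additive in each coordinate $f_S$ (the exponent $\epsilon(S)$ is either the identity or complex conjugation, both additive operations), writing $f_{S_i}=\mathbb{E}(f_{S_i}\mid\mathcal{F}_{n-1})+r_{S_i}$ decomposes $\mathcal{K}_n$ into a main term plus a correction whose $S_i$-slot is $r_{S_i}$. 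The first claim kills the correction pointwise, and iterating over all $S_i\in K_n$ converts $F$ into $F'$ while leaving $\mathcal{K}_n$ unchanged.

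I do not expect any real obstacle here: the substantive work is already packaged in Lemma \ref{cornineq}. The one mildly delicate point is the additivity of $\mathcal{K}_n$ in each coordinate despite the conjugations, which is immediate from $\overline{a+b}=\bar a+\bar b$; beyond that the argument is purely formal bookkeeping on top of the two-ingredient combination of the pointwise bound and the $\mathcal{F}_{n-1}$-characterization.
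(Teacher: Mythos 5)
Your argument is correct and matches the paper's own proof: the first claim by applying Lemma \ref{cornineq} with an index $j\in S$, and the second by a slot-by-slot replacement using the multilinearity of $\mathcal{K}_n$ together with the fact (recorded in the paper's summary of \cite{Sz1}) that $\|f_S-\mathbb{E}(f_S|\mathcal{F}_{n-1})\|_{U_n}=0$. The only nuance is that this last fact follows from the stated property $\|f\|_{U_n}=\|\mathbb{E}(f|\mathcal{F}_{n-1})\|_{U_n}$ rather than from maximality of $\mathcal{F}_{n-1}$ alone, but since the paper asserts that decomposition explicitly, your use of it is fine.
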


\begin{proof} The first part is a direct consequence of lemma \ref{cornineq}.
The second part follows from the first part using a $2^n-1$ step process in which we modify the functions $f_S$ to $\mathbb{E}(f_S|\mathcal{F}_{n-1})$ one in each step. The difference $f_S-\mathbb{E}(f_S|\mathcal{F}_{n-1})$ has zero $U_n$-norm and so by the multi linearity of $\mathcal{K}_n$ the steps of the process don't modify $\mathcal{K}_n$.
\end{proof}

\begin{lemma} Let $\bA=\prod_\omega A_i$ be the ultra product of compact abelian groups. Then $\mathcal{K}_n(F)$ is continuous for every system $F$ of $L^\infty$ functions.
\end{lemma}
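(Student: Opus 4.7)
The plan is to write $\mathcal{K}_n(F)$ as an ultra limit of continuous functions on the factor groups $A_i$ and then invoke the characterization (from the subsection on ultra product spaces) that an ultra limit of continuous functions into a compact Hausdorff target is continuous on $\bA$. Each $f_S\in L^\infty(\bA)$ can be represented internally as $f_S=\lim_\omega f_{S,i}$ with $f_{S,i}\in L^\infty(A_i)$ of the same $L^\infty$-bound; set $F_i=\{f_{S,i}\}_{S\in K_n}$. The Fubini-type compatibility of ultra product integration with ultra limits of uniformly bounded internal functions yields
$$\mathcal{K}_n(F)(x)=\lim_\omega\mathcal{K}_n(F_i)(x_i)\qquad\text{for every }x=\lim_\omega x_i\in\bA,$$
and every value $\mathcal{K}_n(F_i)(x_i)$ lies in the fixed compact disc $\{|z|\le M\}\subset\mathbb{C}$ with $M=\prod_{S\in K_n}\|f_S\|_\infty$. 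It therefore suffices to show that each $\mathcal{K}_n(F_i)\colon A_i\to\mathbb{C}$ is continuous.

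For fixed $i$ I would estimate $|\mathcal{K}_n(F_i)(x)-\mathcal{K}_n(F_i)(x')|$ directly by expanding the difference of the two products of $|K_n|$ factors via a telescoping identity and using the uniform bound $|f_{S,i}|\le M$. This gives
$$|\mathcal{K}_n(F_i)(x)-\mathcal{K}_n(F_i)(x')|\le M^{|K_n|-1}\sum_{S_0\in K_n}\mathbb{E}_{t_1,\dots,t_n}\bigl|f_{S_0,i}(x+{\textstyle\sum_{j\in S_0}}t_j)-f_{S_0,i}(x'+{\textstyle\sum_{j\in S_0}}t_j)\bigr|.$$
For each fixed $S_0$, integrating out any single $t_j$ with $j\in S_0$ first shows that $s:=\sum_{j\in S_0}t_j$ is uniformly distributed on $A_i$, so the corresponding expectation equals the $L^1(A_i)$-distance between $f_{S_0,i}$ and its translate by $x-x'$. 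The classical continuity of translation in $L^1$ on a compact Hausdorff abelian group forces each such $L^1$-difference to tend to $0$ as $x'\to x$, giving continuity of $\mathcal{K}_n(F_i)$.

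Combining the two steps and applying the characterization of continuity on ultra products, $\mathcal{K}_n(F)$ is continuous on $\bA$. The main subtlety is the first step: one must invoke the correct ultra product Fubini property to conclude that $\mathcal{K}_n(F)(x)$ is the pointwise ultra limit of $\mathcal{K}_n(F_i)(x_i)$, not merely an $L^1$- or $\mu$-a.e.\ limit, which is what makes the $\sigma$-topological notion of continuity on $\bA$ compatible with the ordinary continuity on the $A_i$'s. Once this compatibility is in place, the rest of the argument is the clean classical translation-continuity estimate above together with a direct appeal to the cited characterization.
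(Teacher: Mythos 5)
Your overall strategy is the same as the paper's (realize the $f_S$ by internal functions on the $A_i$, identify $\mathcal{K}_n(F)$ with the ultra limit of the $\mathcal{K}_n(F_i)$, and use that an ultra limit of continuous functions into a compact Hausdorff space is continuous on $\bA$), but the first step as you state it has a genuine gap. A function $f_S\in L^\infty(\bA)$ is in general \emph{not} equal everywhere to an ultra limit $\lim_\omega f_{S,i}$ of functions on the factors: the $\sigma$-algebra on $\bA$ is generated by ultra products of Borel sets, and a general measurable function only agrees with such an internal (or even continuous) ultra limit up to a set of measure zero. Since $\mathcal{K}_n(F)(x)$ is defined pointwise for \emph{every} $x$ and continuity is a pointwise statement, you must justify that replacing each $f_S$ by an a.e.\ equal internal representative does not change any value of $\mathcal{K}_n(F)$. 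You flag this as ``the main subtlety'' and attribute it to an ultra product Fubini property, but Fubini compatibility only handles internal data; it does not by itself bridge the a.e.\ discrepancy. This is exactly where the paper's proof begins: by Lemma \ref{cornineq} and multilinearity of $\mathcal{K}_n$, modifying any $f_S$ on a null set changes $\mathcal{K}_n(F)$ by terms bounded by $\|\cdot\|_{2^{n-1}}$ or $\|\cdot\|_{U_n}$ norms of a function vanishing a.e., hence changes no value at all. (Alternatively one can argue that for each fixed $x$ the map $(t_1,\dots,t_n)\mapsto x+\sum_{j\in S}t_j$ is measure preserving, so null modifications only alter the integrand on a null set of the $t$-variables; but some such argument must be made explicitly.)

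Once that invariance is in place, the rest of your argument is fine, and in one respect it differs mildly from the paper: you keep merely $L^\infty$ internal representatives and prove continuity of each $\mathcal{K}_n(F_i)$ on $A_i$ by a telescoping estimate plus continuity of translation in $L^1(A_i)$, whereas the paper chooses the representatives to be ultra limits of \emph{continuous} functions on the $A_i$, making continuity of $\mathcal{K}_n(F_i)$ immediate. Your variant is correct and self-contained at that step, but it does not remove the need for the null-modification lemma above, which is the missing ingredient in the proposal as written.
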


\begin{proof} We get from lemma \ref{cornineq} that changing any of the functions in the system $F$ on a $0$-measure set does not change any value of $\mathcal{K}(F)$.
This means that if $A=\prod_\omega A_i$ is an ultra product group then without loss of generality we can assume that every function in the system $F$ is the ultra limit of continuous functions on the compact groups $A_i$. It follows that the function $\mathcal{K}(F)$ is the ultra limit of functions $\mathcal{K}(F_i)$ where $F_i$ is a system of continuous functions on the compact abelian group $A_i$. This completes the proof.
\end{proof}

\subsection{Higher order Fourier analysis on ultra product groups}

\bigskip

In this section we summarize some results from the papers \cite{Sz1},\cite{Sz2}.
Let $\{A_i\}_{i=1}^\infty$ be a sequence of compact abelian groups and let $\bA$ denote their ultra product according to a fixed ultra filter $\omega$. The ultra product group $\bA$ is endowed with the $\sigma$-algebra $\mathcal{A}$ generated by ultra products of open sets.
We define the measure $\mu$ on $\mathcal{A}$ as the ultra limit of the Haar measures in the sequence $\{A_i\}_{i=1}^\infty$.
Gowers norms can be defined on $\bA$ but they are only semi norms on $L^\infty(\bA,\mathcal{A})$.
It was proved in \cite{Sz1} that for every natural number $k$ there is a unique maximal sub $\sigma$-algebra $\mathcal{F}_k$ in $\mathcal{A}$ such that $U_{k+1}$ is a norm on $L^\infty(\bA,\mathcal{F}_k)$. Furthermore if $f\in L^\infty(\bA,\mathcal{A})$ then $\|f\|_{U_{k+1}}=\|\mathbb{E}(f|\mathcal{F}_k)\|_{U_{k+1}}$. In other words every bounded measurable function $f$ on $\bA$ is (uniquely) decomposable as $f=f_k+g$ such that $f_k$ is measurable in $\mathcal{F}_k$ and $\|g\|_{U_{k+1}}=0$. We can look at this decomposition in a way that $f_k=\mathbb{E}(f|\mathcal{F}_k)$ is the structured part of $f$ and $g$ is the random part. This notion of randomness depends on the number $k$. 
Since $\|f\|_{U_k}\leq\|f\|_{U_{k+1}}$ for every $k$ we have that $\mathcal{F}_1\subset\mathcal{F}_2\subset\mathcal{F}_3\subset\dots$.
(We can also introduce $\mathcal{F}_0$ as the trivial $\sigma$-algebra on $\bA$.)

The $\sigma$-algebras $\mathcal{F}_k$ have many equivalent descriptions. The simplest uses $k+1$ dimensional cubes.
The elements in the group $\bA^{k+2}$ with coordinates $(x,t_1,t_2,\dots,t_k)$ are in a one to one correspondence with the $k+1$ dimensional cubes $f:\{0,1\}^{k+1}\rightarrow \bA$ such that $f(v)=x+\sum_{i=1}^{k+1}v_it_i$. Wit this notation, every vertex $v\in\{0,1\}^k$ gives rise to the homomorphism $\psi_v:\bA^{k+2}\rightarrow\bA$ defined by $\psi_v(f)=f(v)$.
Let $\mathcal{A}_v=\psi_v^{-1}(\mathcal{A})$ for $v\in\{0,1\}^{k+1}$. Then we have that
\begin{equation}\label{fk}
\psi_0^{-1}(\mathcal{F}_k)=\mathcal{A}_0\bigcap\bigl(\bigcup_{v\in\{0,1\}^{k+1}\setminus\{0\}}\mathcal{A}_v\bigr).
\end{equation}
Note that this formula does not fully use the abelian group structure on $\bA$. The same formula makes sense on the ultra products of compact nilspaces $\{N_i\}_{i=1}^\infty$. In that case $\bA^{k+2}=C^{k+1}(\bA)$ is replaced by the ultra product of the spaces $C^{k+1}(N_i)$.

It is interesting to mention that there is another description of $\mathcal{F}_k$. The elements of $\mathcal{F}_1$ are those measurable sets $S\subset\bA$ whose shifts $S+x$ generate a separable $\sigma$-algebra as $x$ runs through all elements in $\bA$.
We introduce $\mathcal{F}_k$ recursively. A set $S\subset\bA$ is measurable in $\mathcal{F}_k$ if the $\sigma$-algebra generated by the shifts $\{S+x|x\in\bA\}$ and $\mathcal{F}_{k-1}$ can also be generated by $\mathcal{F}_{k-1}$ and countable many measurable sets $\{S_i\}_{i=1}^\infty$. 

Ordinary Fourier analysis is related to the fact the $L^2(\bA,\mathcal{F}_1)$ is the orthogonal sum (in a unique way) of one dimensional shift invariant subspaces. These subspaces are generated by continuous characters $\chi:\bA\rightarrow\mathbb{C}$. Recall that a character is a homomorphism to the complex unit circle (as a group with multiplication). 
This decomposition has a generalization to $\mathcal{F}_k$. The space $L_2(\bA,\mathcal{F}_k)$ is a module over the algebra $L^\infty(\bA,\mathcal{F}_{k-1})$. It turns out \cite{Sz1},\cite{Sz2} that there is a unique decomposition
\begin{equation}\label{hdecomp1}
L^2(\bA,\mathcal{F}_k)=\bigoplus_{\phi\in\hat{\bA}_k} V_\phi
\end{equation}
where $V_\phi$ is a shift invariant rank one module over $L^\infty(\bA,\mathcal{F}_{k-1})$.
This means that in each space $V_\phi$ there is a function $\chi:\bA\rightarrow\mathbb{C}$ with $|\chi|=1$ such that
the set $\chi\cdot L^\infty(\bA,\mathcal{F}_{k-1})$ is dense in $V_\phi$ in the $L^2$ metric. Such functions will be called $k$-th order characters. We will denote the set of $k$-th order characters in $V_\phi$ by $V^*_\phi$.

It is a crucial fact that point wise multiplication of elements from these modules defines a group structure on $\hat{\bA}_k$.
We say that $\hat{\bA}_k$ is the $k$-th order dual group of $\bA$.
The decomposition (\ref{hdecomp1}) implies that for every function $f\in L^2(\bA,\mathcal{A})$ there is a unique decomposition
$$f=g+\sum_{\phi\in\hat{\bA}_k}f_\phi$$ converging in $L^2$ where $g=f-\mathbb{E}(f|\mathcal{F}_k)$ and $f_\phi$ is the projection of $f$ to $V_\phi$. In particular if $f\in L^2(\bA,\mathcal{F}_k)$ then $g=0$. 
We call this the $k$-th order Fourier decomposition of $f$. Note that the $k$-th order Fourier decomposition has only countable non zero terms.

Let $V^\infty_\phi$ denote the set of bounded functions in $V_\phi$. For a function $f:\bA\rightarrow\mathbb{C}$ we introduce the function
$$(f)^\diamond=\prod_{v\in\{0,1\}^{k+1}}\psi_v\circ f^{\epsilon(v)}$$
on $C^{k+1}(\bA)$. For $x\in\bA$ we denote by $(f)^\diamond_x$ the restriction of $(f)^\diamond$ to $C^{k+1}_x(\bA)$ divided by $f(x)$.

The next lemma \cite{Sz2} is crucial for the results in this paper.

\begin{lemma}\label{kisebb} Let $\phi\in\hat{\bA}_k$ and $f\in V^\infty_\phi$. Then the function $(f)^\diamond$ is measurable in
$$\bigcup_{v\in\{0,1\}^{k+1}}\psi_v^{-1}(\mathcal{F}_{k-1})$$
on $C^{k+1}(\bA)$.
\end{lemma}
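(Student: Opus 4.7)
The plan is to reduce first to the case of a single $k$-th order character $\chi \in V^*_\phi$, then exploit a telescoping identity expressing $(\chi)^\diamond$ on $(k+1)$-cubes through a $k$-dimensional Gowers-type product of the derivative $\Delta_{t_{k+1}}\chi$, which is itself $\mathcal{F}_{k-1}$-measurable by the defining property of $k$-th order characters.

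For the reduction, fix a generator $\chi \in V^*_\phi$. Since $V_\phi$ is the $L^2$-closure of $\chi\cdot L^\infty(\bA,\mathcal{F}_{k-1})$ and $|\chi|=1$, every $f \in V^\infty_\phi$ can be written as $f = \chi h$ with $h := \overline{\chi} f \in L^\infty(\bA, \mathcal{F}_{k-1})$ (the bounded representative of the $L^2$-limit). Then
\[ (f)^\diamond(c) \;=\; (\chi)^\diamond(c) \cdot \prod_{v \in \{0,1\}^{k+1}} h(c(v))^{\epsilon(v)}, \]
and the second factor is manifestly in $\bigcup_v \psi_v^{-1}(\mathcal{F}_{k-1})$ because each $h\circ\psi_v$ lies in $\psi_v^{-1}(\mathcal{F}_{k-1})$. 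So it suffices to prove the claim for $f = \chi$.

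Parametrize $c \in C^{k+1}(\bA)$ by $(x, t_1, \ldots, t_{k+1})$ with $c(v) = x + \sum_{i \in v} t_i$, and split each vertex as $v = (w,b)$ with $w \in \{0,1\}^k$ and $b \in \{0,1\}$. Since $\epsilon(w,1) = -\epsilon(w,0)$, pairing $(w,0)$ with $(w,1)$ gives
\[ (\chi)^\diamond(c) \;=\; \prod_{w \in \{0,1\}^k} \bigl[\chi(c(w,0))\,\overline{\chi(c(w,1))}\bigr]^{\epsilon(w)} \;=\; \prod_{w \in \{0,1\}^k} (\Delta_{t_{k+1}}\chi)(c(w,0))^{\epsilon(w)}. \]
The rank-one shift-invariant $L^\infty(\bA,\mathcal{F}_{k-1})$-module property of $V_\phi$ yields $\Delta_t \chi \in L^\infty(\bA,\mathcal{F}_{k-1})$ for every $t \in \bA$. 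For each fixed value of $t_{k+1}$, the right-hand side is thus a product of $\mathcal{F}_{k-1}$-measurable functions at the vertices of the $k$-face $\{(w,0) : w \in \{0,1\}^k\}$, which places it in $\bigcup_{w \in \{0,1\}^k} \psi_{(w,0)}^{-1}(\mathcal{F}_{k-1})$, a sub-$\sigma$-algebra of the target.

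The main obstacle is to promote this fiberwise statement (one $t_{k+1}$ at a time) to genuine joint measurability on all of $C^{k+1}(\bA)$. Individual paired factors $\chi(c(w,0))\overline{\chi(c(w,1))}$ are typically \emph{not} in $\psi_{(w,0)}^{-1}(\mathcal{F}_{k-1}) \vee \psi_{(w,1)}^{-1}(\mathcal{F}_{k-1})$ — already for $k=1$ the expression $\chi(y)\overline{\chi(z)} = \chi(y-z)$ is a nontrivial character of $\bA$ — so the required measurability emerges only from the cancellations in the full product. To close this I would appeal to the structure theory of $\hat{\bA}_k$ from \cite{Sz2}: the map $t \mapsto \Delta_t\chi$ is a measurable $L^\infty(\bA,\mathcal{F}_{k-1})$-valued map (this follows from the continuity of the shift action of $\bA$ on $V_\phi$), which combined with the cocycle identity $\Delta_{s+t}\chi(y) = \Delta_s\chi(y)\cdot \Delta_t\chi(y+s)$ and a Fubini argument on the ultra product promotes the fiberwise conclusion to joint measurability in $\bigcup_v \psi_v^{-1}(\mathcal{F}_{k-1})$, completing the proof.
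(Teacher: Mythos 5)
Your opening moves are sound and are the natural way to begin: writing $f=\chi h$ with $h\in L^\infty(\bA,\mathcal{F}_{k-1})$ reduces the claim to a $k$-th order character; the vertex pairing $(w,0),(w,1)$ correctly rewrites $(\chi)^\diamond(c)$ as $\prod_{w\in\{0,1\}^k}(\Delta_{t_{k+1}}\chi)(c(w,0))^{\epsilon(w)}$; and for each \emph{fixed} $t$ the containment $\Delta_t\chi\in L^\infty(\bA,\mathcal{F}_{k-1})$ does follow from shift invariance and the rank-one module structure. The problem is that your final paragraph, which is where the lemma actually lives, is not an argument. Measurability with respect to $\bigvee_{v}\psi_v^{-1}(\mathcal{F}_{k-1})$ constrains how the expression may depend on $t_{k+1}$: only through $\mathcal{F}_{k-1}$-measurable data evaluated at the cube vertices. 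Slice-wise measurability in the face $\sigma$-algebra, for every fixed $t_{k+1}$, is strictly weaker and cannot be ``promoted'' by any Fubini-type device. A concrete obstruction: for $k=1$ the target $\sigma$-algebra is trivial (mod null sets), and the function $c\mapsto\theta(t_2)$, with $\theta$ a nontrivial character of $\bA$, is constant on every slice with $t_2$ fixed yet is not measurable in the join; so the fiberwise statement you establish simply does not imply the conclusion, and the whole content of the lemma is hidden in how the $\mathcal{F}_{k-1}$-measurable version $g_t$ of $\Delta_t\chi$ varies with $t$.

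The specific tools you invoke to bridge this do not exist in this setting. On the ultraproduct group $\bA$ the shift action on $V_\phi$ is not continuous in any usable sense (translation continuity in $L^2$ is a compact-group phenomenon and fails for the ultraproduct with its $\sigma$-topology and Loeb-type measure), so ``measurability of $t\mapsto\Delta_t\chi$'' in a form strong enough to help is exactly what must be proved, not an available input; and the cocycle identity for $\Delta_t\chi$ by itself carries no information about $\mathcal{F}_{k-1}$-structure in the $t$-variable. Note also that the paper does not prove this lemma at all: it is quoted from \cite{Sz2}, where the proof rests on the structure theory of the rank-one modules and higher-order dual groups — in effect, on showing that the decomposition of $\Delta_t\chi$ into $(k-1)$-th order characters depends on $t$ in a structured, lower-order way, jointly in $(x,t)$. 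That joint structured dependence is precisely the point at which your sketch stops, so as it stands the proposal has a genuine gap at the heart of the statement.
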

In fact this can be used as an alternative definition for elements in rank one modules.
(An advantage of this definition is that it does not use shifts and so it can be generalized to nilspaces.)

\bigskip

In the paper \cite{Sz2} there is detailed analysis of the structures of $\hat{\bA}_k$.
Without going into the details of that we mention three important facts.

\begin{proposition} We have the following statements for the structure of $\hat{\bA}_k$.

\begin{enumerate}
\item $\bA\simeq\hat{\bA}_1$.
\item Let $n$ be a fixed natural number. If all the groups $A_i$ are of exponent $n$, then also $\hat{\bA}_k$ has exponent $n$ for every $k$. 
\item If the smallest prime divisor of $|A_i|$ goes to infinity with $i$ then $\hat{\bA}_k$ is torsion free for every $k$.
\item If all the groups $A_i$ are generated by at most $d$ elements then $\hat{\bA}_k$ is torsion free for every $k\geq 2$.
\end{enumerate}
\end{proposition}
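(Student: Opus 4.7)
The plan is to deduce all four parts from the rank-one module decomposition (\ref{hdecomp1}), the ultra-limit description of low-order characters, and the ``differentiation'' Lemma \ref{kisebb}, following \cite{Sz2}.

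For (1), at level $k=1$ one has $L^\infty(\bA,\mathcal{F}_0) = \mathbb{C}$, so the rank-one modules $V_\phi$ are one-dimensional $\mathbb{C}$-subspaces spanned by measurable characters $\chi : \bA \to S^1$, and the group law in $\hat{\bA}_1$ is pointwise multiplication. Using that $\mathcal{F}_1$ is generated by continuous morphisms into compact Hausdorff abelian groups, every such $\chi$ is an ultra-limit $\lim_\omega \chi_i$ with $\chi_i \in \hat{A_i}$, identifying $\hat{\bA}_1 \simeq \prod_\omega \hat{A_i}$ as abstract groups. The isomorphism $\bA \simeq \hat{\bA}_1$ then follows $\omega$-pointwise from the Pontryagin-type self-pairings $A_i \times \hat{A_i} \to S^1$ (canonical in the elementary abelian case, non-canonical in general).

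For (2), the base case is immediate: if each $A_i$ has exponent $n$, every $\chi_i \in \hat{A_i}$ takes values in the $n$-th roots of unity, so does every ultra-limit character, and hence $\hat{\bA}_1$ has exponent $n$. For the inductive step, take a $k$-th order character $\chi \in V_\phi^*$. By Lemma \ref{kisebb} its ``derivative'' $(\chi)^\diamond$ is measurable in $\bigcup_v \psi_v^{-1}(\mathcal{F}_{k-1})$ on $C^{k+1}(\bA)$, and multiplicativity of the $\diamond$-construction gives $(\chi^n)^\diamond = ((\chi)^\diamond)^n$. Applying the induction hypothesis at level $k-1$ to the cocycle $(\chi)^\diamond$ shows that $(\chi^n)^\diamond$ is trivial, whence $\chi^n$ lies in $L^\infty(\bA,\mathcal{F}_{k-1})$; the rank-one uniqueness in (\ref{hdecomp1}) then forces $\chi^n$ to represent the trivial element of $\hat{\bA}_k$.

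For (3), at level $1$ a torsion element of $\hat{\bA}_1 \simeq \prod_\omega \hat{A_i}$ of order $m$ requires $\hat{A_i}$ to contain an element of order $m$ for $\omega$-many $i$, which fails as soon as the smallest prime divisor of $|A_i|=|\hat{A_i}|$ exceeds $m$. The induction on $k$ has the same shape as in (2), with ``exponent $n$'' replaced by ``torsion free'': a hypothetical relation $\chi^m=1$ at level $k$ would make $(\chi)^\diamond$ an $m$-torsion element at level $k-1$, contradicting the induction hypothesis. Part (4) has a different flavour, since bounded rank does not prevent torsion at level $1$ (cyclic groups already give torsion in $\hat{\bA}_1$); instead, one must use the result from \cite{Sz2} that for $k \geq 2$ every $k$-th order character factors through a polynomial morphism into a compact $k$-step nilspace whose structure groups are forced to be free abelian by the bounded-rank hypothesis, matching item (2) of the discussion of $(\mathfrak{A})_k$ following Definition~5. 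I expect the main obstacle to be the inductive step common to (2) and (3): translating the statement that $(\chi^n)^\diamond$ is trivial at level $k-1$ into the statement that $\chi^n$ itself is trivial at level $k$ rests on the non-trivial rank-one uniqueness of (\ref{hdecomp1}), and in (4) the additional obstacle is the structural classification in \cite{Sz2} telling us precisely which finitely generated abelian groups appear as $\hat{A_i}$ at each level for a bounded-rank family.
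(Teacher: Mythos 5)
You should first note that the paper itself contains no proof of this proposition: it is stated explicitly as a summary of results from \cite{Sz2} (``Without going into the details of that we mention...''), so there is no in-paper argument to compare with and your sketch has to stand on its own. Measured that way, part (1) and the level-one base cases of (2) and (3) are reasonable for ultraproducts of finite groups (though your last step in (1), which uses a self-duality $A_i\simeq\hat{A_i}$, only makes sense when the $A_i$ are finite; for a compact group such as the circle there is no such pairing, so the general compact case needs separate care). The genuine gap is the inductive step you use for both (2) and (3). ``Applying the induction hypothesis at level $k-1$ to the cocycle $(\chi)^\diamond$'' is not a legitimate move: Lemma \ref{kisebb} only says that $(\chi)^\diamond$ is measurable in $\bigcup_v\psi_v^{-1}(\mathcal{F}_{k-1})$; it is a unimodular function on $C^{k+1}(\bA)$, not an element of $\hat{\bA}_{k-1}$, so a statement about the exponent or torsion of $\hat{\bA}_{k-1}$ cannot be applied to it. Moreover the conclusion you draw is misstated: $(\chi^n)^\diamond=\prod_v\psi_v\circ(\chi^n)^{\epsilon(v)}$ being ``trivial'' is neither obtainable nor what you need (identical vanishing of the alternating cube product would say $\chi^n$ is a polynomial phase, not that it is of lower order). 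What your scheme actually requires is a vertex-wise factorization of $(\chi)^\diamond$ into pullbacks of $(k-1)$-st order characters times lower-order terms, so that the induction hypothesis can be applied factor by factor and the converse direction indicated in the remark after Lemma \ref{kisebb} can then be used to conclude that $\chi^n$ is measurable in $\mathcal{F}_{k-1}$, whence $n\phi=0$ by (\ref{hdecomp1}). That factorization is precisely the nontrivial structural content of \cite{Sz2} and is missing from your argument. (For (2) a cleaner route would use that $\bA$ itself has exponent $n$ together with the fact that $t\mapsto[\Delta_t\chi]\in\hat{\bA}_{k-1}$ is a homomorphism, but you do not set that up either.)

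Part (4) as written is circular. The input you invoke --- that for a bounded-rank family the structure groups of the relevant nilspaces are free abelian --- is, within this paper's logic, a reformulation of item (4) itself: $(\mathfrak{A})_k$ is defined as the collection of finitely generated subgroups of $\hat{\bA}_k$, and the structure groups embed into $\hat{\bA}_k$ via character-preserving factors, so ``the structure groups are free'' is essentially the torsion-freeness you are trying to prove. Beyond that you defer to \cite{Sz2}, which is exactly where the whole proposition is established, so nothing is gained. Note also that (4) is the delicate item: bounded rank does not exclude torsion in $\hat{\bA}_1$ (cyclic groups already produce it), so any honest proof must isolate the mechanism that kills torsion only for $k\geq 2$, and your sketch does not identify it.
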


\bigskip

\subsection{Nilspace factors of ultra product groups}

\bigskip

\begin{definition} A $k$-step nilspace factor of $\bA$ is given by a continuous morphism $\Psi:\bA\rightarrow N$ into a compact $k$-step nilspace such that the induced maps $C^n(\bA)\rightarrow C^n(N)$ are surjective for every $n$.
Equivalently, a continuous surjective function $\Psi:\bA\rightarrow T$ into a compact Hausdorff space $T$ defines a nilspace factor if the cubespace structure on $T$ (obtained by composing cubes in $\bA$ with $f$) is a $k$-step nilspace. 
\end{definition}

We can also think of a nilspace factor as an equivalence relation on $\bA$ whose classes are the fibres of $\Psi$.
Let $\Psi:\bA\rightarrow N$ be any continuous morphism of $\bA$ into a compact $k$-step nilspace. It follows from equation (\ref{fk}) and the unique closing property that $\Psi$ is measurable in $\mathcal{F}_k$. 
Assume that the structure groups of $N$ are $A_1,A_2,\dots,A_k$.
If $\chi\in\hat{A}_i$ is a continuous linear character on $A_i$ then we can represent it by a Borel function $\chi':N_i\rightarrow\mathbb{C}$ on the $i$-step factor $N_i$ of $N$ such that $\chi'(x+a)=\chi'(x)\chi(a)$ and $|\chi'|=1$ everywhere.
We have that $\Psi\circ\pi_i\circ\chi'$ is a $k$-th order character on $\bA$ and its module does not depend on the choice of $\chi'$ (only on $\chi$). This induces homomorphisms $\tau_i:\hat{A_i}\rightarrow\hat{\bA}_i$.

\begin{definition} We say that $\Psi$ is {\bf character preserving} if all the homomorphisms $\tau_i$ are injective.
\end{definition}

We will say that a nilspace factor $\Psi:\bA\rightarrow N$ is {\bf measure preserving} if the induced maps $C^n(\bA)\rightarrow C^n(N)$ are all measure preserving. We say that $\Psi$ is {\bf rooted measure preserving} if for every $a\in\bA$ and natural number $n\in\mathbb{N}$ the map $C^n_a(\bA)\rightarrow C^n_{\Psi(a)}(N)$ induced by $\Psi$ is measure preserving.

\begin{theorem}\label{charpres} Let $\Psi:\bA\rightarrow N$ be a $k$-step nilspace factor. Then the following statements are equivalent.
\begin{enumerate}
\item $\Psi$ is character preserving,
\item $\Psi$ is rooted measure preserving,
\item $\Psi$ is measure preserving.
\end{enumerate}
\end{theorem}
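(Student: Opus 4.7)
\proof (Plan.) The plan is to establish (2) $\Leftrightarrow$ (3) by soft measure-theoretic arguments, deduce (3) $\Rightarrow$ (1) directly from the behavior of Gowers norms under measure preserving maps, and attack (1) $\Rightarrow$ (3) by induction on the step $k$ using the abelian bundle description of $N$.

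For (2) $\Rightarrow$ (3), I would disintegrate the cube measure on $C^n(\bA)$ over $\bA$ via evaluation at $0^n$: its fiber above $a\in\bA$ is $C^n_a(\bA)$ with measure $\mu_a$, and similarly for $N$ the cube measure on $C^n(N)$ disintegrates via the continuous system of measures on the fibers $C^n_y(N)$ recalled in the paper. Rooted measure preservation says each $\mu_a$ pushes to $\nu_{\Psi(a)}$, so it suffices to know that $\Psi_*\mu=\nu$ on $\bA\to N$; but this follows from continuity of $\Psi$, surjectivity of the induced maps on cubes and the $k=0$ case of the bundle axioms. The reverse (3) $\Rightarrow$ (2) proceeds by the uniqueness of disintegration: measure preservation of $\Psi_n$ together with the CSM property on both sides forces the almost everywhere identity $(\Psi_n)_*\mu_a=\nu_{\Psi(a)}$; continuity of the fiber maps (in the weak topology of measures) then upgrades ``almost every $a$'' to ``every $a$''.

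For (3) $\Rightarrow$ (1) I argue by contradiction. Suppose some $\tau_i$ has a nontrivial element $\chi$ in its kernel, and let $\chi':N_i\to\mathbb C$ be a representing section. Then $f:=\chi'\circ\pi_i\circ\Psi$ is an $i$-th order character of $\bA$ whose module is trivial, so $f$ is measurable in $\mathcal F_{i-1}$ and hence $\|f\|_{U_{i+1}}=0$. On the other hand, measure preservation of $\Psi$ on $C^{i+1}(\bA)$ lets us rewrite $\|f\|_{U_{i+1}}^{2^{i+1}}$ as the corresponding Gowers integral of $\chi'\circ\pi_i$ computed on $C^{i+1}(N)$; the latter equals $1$ (up to the analogous computation for a genuine character of a compact nilspace of step $\geq i$), contradicting the vanishing above.

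The main obstacle is (1) $\Rightarrow$ (3), which I would handle by induction on the step $k$ using the $k$-fold abelian bundle description of $N$. For $k=1$ a $1$-step nilspace is just a compact abelian group and $\Psi$ is, up to a translation, a continuous surjective homomorphism $\bA\to A_1$; character preservation is then the injectivity of $\hat A_1\to\hat\bA_1$, and measure preservation on $C^n$ reduces to $\Psi_*\mu=\text{Haar}$, which is standard Pontryagin duality. For the inductive step, let $\Psi_{k-1}:=\pi_{k-1}\circ\Psi:\bA\to N_{k-1}$; the first $k-1$ injectivities $\tau_1,\dots,\tau_{k-1}$ make $\Psi_{k-1}$ character preserving, hence measure preserving by the induction hypothesis. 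Since $N$ is an $A_k$-bundle over $N_{k-1}$ determined by a degree-$(k-1)$ cocycle $\rho$, the projection $C^n(N)\to C^n(N_{k-1})$ has fibers which are affine translates of $C^n(\mathcal D_k(A_k))$, so promoting measure preservation from $\Psi_{k-1}$ to $\Psi$ amounts to showing that the pushforward measure on each fiber is Haar on $C^n(\mathcal D_k(A_k))$. Expanding into Fourier characters of $A_k^{\{0,1\}^n}$ and applying Lemma \ref{kisebb} to each nontrivial $\chi\in\hat A_k$, the required vanishing of the Fourier coefficients on the $\bA$-side becomes exactly the statement that $\chi'\circ\pi_k\circ\Psi$ lies in the nontrivial module $V_{\tau_k(\chi)}$ of $\hat\bA_k$; and this is guaranteed precisely by the injectivity of $\tau_k$. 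The technical core of the argument is thus the Fourier-analytic reduction on $A_k^{\{0,1\}^n}$, and the careful bookkeeping to confirm that the cube-space structure on the bundle is compatible with the cocycle extension described in Proposition \ref{meascont}. \qed
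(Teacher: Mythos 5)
Your overall architecture is close to the paper's (induction on the step, reduce to showing that the pushforward measure is uniform along the $C^n(\mathcal{D}_k(A_k))$-fibres over $C^n(N_{k-1})$, use character preservation to kill nontrivial frequencies, and get the easy implication back from Gowers-norm preservation), but three steps as written do not work. First, in (3)$\Rightarrow$(1) you claim that $f=\chi'\circ\pi_i\circ\Psi$, being measurable in $\mathcal{F}_{i-1}$, has $\|f\|_{U_{i+1}}=0$; this is backwards, since $U_{i+1}$ is a \emph{norm} on $L^\infty(\mathcal{F}_i)\supseteq L^\infty(\mathcal{F}_{i-1})$, so a unimodular $\mathcal{F}_{i-1}$-measurable function has strictly positive $U_{i+1}$ norm. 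The correct contradiction goes through $U_i$: measure preservation gives $\|f\|_{U_i(\bA)}=\|\chi'\circ\pi_i\|_{U_i(N)}=0$ (the fibre average of a nontrivial $\chi$ over $C^i(\mathcal{D}_i(A_i))=A_i^{\{0,1\}^i}$ vanishes), hence $\mathbb{E}(f|\mathcal{F}_{i-1})=0$, which is incompatible with $f$ being unimodular and $\mathcal{F}_{i-1}$-measurable; also your side claim that the $U_{i+1}$-integral of $\chi'\circ\pi_i$ on $C^{i+1}(N)$ equals $1$ is false in general (only positivity holds, cf.\ the Heisenberg example). Second, your scheme needs (3)$\Rightarrow$(2), and the proposed upgrade ``a.e.\ rooted identity plus continuity implies everywhere'' is invalid on $\bA$: in the ultraproduct $\sigma$-topology nonempty open sets can have measure zero (ultraproducts of shrinking balls), so two continuous functions can agree almost everywhere without agreeing everywhere. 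The paper avoids this entirely by proving (1)$\Rightarrow$(2) directly, at \emph{every} root (reduced to the root $0$ by translation), via invariance of the pushforward on $C^n_0(\bA)$ under the action of $C^n_0(\mathcal{D}_k(A_k))$; the rooted, everywhere statement is moreover exactly what is used later in the proof of Theorem \ref{nilnil}.

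Third, in your main step (1)$\Rightarrow$(3) the frequencies that must be killed are character \emph{tuples} $t=(\chi_v)_{v\in\{0,1\}^n}$ that restrict nontrivially to the fibre group, i.e.\ $t\notin Z_{n,k}(\hat{A_k})$; this is a face-sum condition, not ``each $\chi$ nontrivial'', and the required vanishing is not a consequence of Lemma \ref{kisebb} (which is a measurability statement about $(f)^\diamond$) but of Lemma \ref{charconv}, whose proof rests on the corner-convolution/Gowers--Cauchy--Schwarz estimates (Corollary \ref{cornineqcor}); injectivity of $\tau_k$ enters precisely to guarantee that the pulled-back tuple stays outside $Z_{n,k}(\hat{\bA}_k)$, so that some face-sum lies in a nontrivial module and the corresponding corner convolution vanishes. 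In addition, identifying the conditional measure on each fibre forces you to test against functions of the base $C^n(N_{k-1})$ simultaneously with the fibre characters; the paper does this with the Stone--Weierstrass system of vertexwise products of functions in $V^\infty_{\chi_v}(N)$, and this relative aspect is the substance of the argument rather than bookkeeping. With these repairs your proof essentially becomes the paper's proof, so I would not count it as a genuinely different route.
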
 

Before proving this theorem we need some preparation.
Let $K_n$ denote the set $\{0,1\}^n\setminus\{0^n\}$.
For an abelian group $A$ we denote by $Z_{n,k}(A)$ the set of maps $m:\{0,1\}^n\rightarrow A$ such that for every $n-k$ dimensional face $F\subseteq\{0,1\}^n$ the equality $\sum_{v\in F}m(v)=0$ holds.
Similarly we denote by $Z_{n,k}^*$ the set of maps $m:\{0,1\}^n\rightarrow A$ such that $\sum_{v\in F}m(v)=0$ holds for every $n-k$ dimensional face contained in $K_n$. It is easy to see that the homomorphism $Z_{n,k}(A)\rightarrow Z_{n,k}^*(A)$ given by forgetting to coordinate at $0^k$ is surjective.

It was proved in \cite{Sz3} that the abelian group $Z_{n,k}(A)$ is generated by the elements $g_{F,a}$ where $F$ is a $k+1$ dimensional face, $a\in A$ and $g_{F,a}(v)=a(-1)^{h(v)}$ if $v\in F$ and $g_{F,a}(v)=0$ elsewhere.
This implies that if $G$ is a compact abelian group then $C^n(\mathcal{D}_k(G))\subset G^{\{0,1\}^n}$ is equal to the kernel of $Z_{n,k}(\hat{G})$ where $\hat{G}$ is the dual group of $G$. 

We will consider the embedding $\tau$ of $G^{K_n}$ into $G^{\{0,1\}^n}$ where the coordinate at $0^k$ is set to be $0$.
This embedding induces a homomorphism $\hat{\tau}$ from $\hat{G}^{\{0,1\}^n}$ to $\hat{G}^{K_n}$. The homomorphism $\hat{\tau}$ is given by forgetting the component at $0^n$. This means that the image of $Z_{n,k}(\hat{G})$ under $\hat{\tau}$ is $Z_{n,k}^*(\hat{G})$. We obtain that $C^n_0(\mathcal{D}_k(G))$ is equal to the kernel of $Z_{n,k}^*(\hat{G})$.

\bigskip

Let $\psi_v:C^n_x(\bA)\rightarrow\bA$ denote the map with $\psi_v(f)=f(v)$.
\begin{lemma}\label{charconv} Let $t:K_n\rightarrow\hat{\bA}_k$ be a map such that $t\notin Z_{n,k}^*(\hat{\bA}_k)$ and let $\{f_v\}_{v\in K_n}$ be a system of functions on $\bA$ such that $f_v$ is in $V_{t(v)}^\infty$. Then 
$$\int_{C^n_0(\bA)}\prod_{v\in K_n}\psi_v\circ f_v~d\mu=0.$$
\end{lemma}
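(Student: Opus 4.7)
The plan is to proceed by induction on $k$, exploiting the orthogonality between distinct rank-one modules $V_\phi \subset L^2(\bA,\mathcal{F}_k)$ to force the integral to vanish. Since $t \notin Z_{n,k}^*(\hat{\bA}_k)$, I can pick an $(n-k)$-dimensional face $F \subseteq K_n$ with $\phi_F := \sum_{v \in F} t(v) \neq 0$ in $\hat{\bA}_k$; this face is cut out by fixing some $k$ coordinates in a set $I \subset [n]$ to values $\epsilon(i) \in \{0,1\}$, at least one of which equals $1$ since $0^n \notin F$.

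The first concrete step is to parametrize $C_0^n(\bA)$ by $(t_1,\ldots,t_n) \in \bA^n$ via $c(v) = \sum_i v_i t_i$ and to use Fubini, integrating first over the $n-k$ ``free'' coordinates $\{t_j : j \notin I\}$ and then over the remaining $k$ coordinates $\{t_i : i \in I\}$. The inner average of $\prod_{v \in K_n} f_v(c(v))$ over the free coordinates should, I expect, equal a nontrivial element of $V_{\phi_F}^\infty$ in the $t_i$ variables, modulo a factor measurable in $\mathcal{F}_{k-1}$; the outer integration then detects $\phi_F \neq 0$ and kills the expression by orthogonality of the modules.

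The base case $k=1$ is an explicit calculation: each $V_{t(v)}$ is spanned by the ordinary character $\chi_{t(v)}$, so the integrand factors as $(\prod_v c_v)\prod_i \chi_{h_i}(t_i)$ with $h_i = \sum_{v:\, v_i=1} t(v)$. For $k=1$ the faces in $K_n$ of codimension one are exactly the $\{v : v_i = 1\}$, so $t \notin Z^*_{n,1}$ forces some $h_i \neq 0$, and the corresponding one-variable integral vanishes.

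For the inductive step ($k \geq 2$) the main tool is Lemma \ref{kisebb}, which places $(f)^\diamond$ for $f \in V_\phi^\infty$ inside the boundary $\sigma$-algebra $\bigcup_v \psi_v^{-1}(\mathcal{F}_{k-1})$ on $C^{k+1}(\bA)$. My plan is to apply this inside a suitable $(k+1)$-dimensional sub-cube of the $n$-cube to peel $\mathcal{F}_{k-1}$-measurable factors off the product, reducing the integral to an analogous statement at level $k-1$. The hard part, where I expect the real technical work, is the passage from the product $\prod_{v \in F} f_v(c(v))$ of $k$-th order characters with \emph{shifted} arguments to an honest character in $V_{\phi_F}$: the twists from the shifts must be absorbed into $L^\infty(\mathcal{F}_{k-1})$ factors using the module structure of $V_{\phi_F}$, after which the inductive hypothesis together with the orthogonality $V_\phi \perp L^2(\mathcal{F}_{k-1})$ for $\phi \neq 0$ closes the argument.
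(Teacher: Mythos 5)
Your opening moves match the paper's: the choice of an $(n-k)$-dimensional face $F\subset K_n$ with $\phi_F=\sum_{v\in F}t(v)\neq 0$, the parametrization of $C^n_0(\bA)$ by $(t_1,\dots,t_n)\in\bA^n$, the Fubini splitting according to the $k$ coordinates in $I$ that cut out $F$, and the base case $k=1$ are all sound. Also, the step you flag as ``the hard part'' is actually easy: for fixed values of the free variables, the product over the face, as a function of $y=\sum_{i\in I}w_it_i$, lies in $V_{\phi_F}^\infty$ simply because the modules are shift invariant and indices add under pointwise multiplication. The genuine gap is in the mechanism that is supposed to make the integral vanish. Group the product according to the restriction $u\in\{0,1\}^I$ of $v$ to the fixed coordinates: besides the face slice $g_w\in V_{\phi_F}^\infty$, every other slice $g_u$ (evaluated at $\sum_{i\in I}u_it_i$) lies in a module $V_{h_u}$ with $h_u=\sum_{v:\,v|_I=u}t(v)$, which need not be trivial. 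So it is false in general that the integrand is ``an element of $V_{\phi_F}^\infty$ times a factor measurable in $\mathcal{F}_{k-1}$'', and the remaining integral over $\bA^k$ is not an $L^2$ pairing of one module against another: it is a multilinear average in which each factor is composed with a different linear form of the $t_i$. Pairwise orthogonality of the modules does not by itself force such an average to vanish; moreover your order of integration (free variables first) mixes all the slices and destroys even the identification of the $V_{\phi_F}$ factor.

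What is needed at exactly this point is the corner-convolution/Gowers--Cauchy--Schwarz machinery, which your proposal never invokes. For each fixed value of the $n-k$ face variables, the integral over the $k$ transverse variables is (up to the $u=0$ factor) the corner convolution $\mathcal{K}_k(G)(0)$ of the system $G=\{g_u^{\epsilon(u)}\}_{u\in K_k}$; Lemma \ref{cornineq} bounds this by a product in which $\|g_w\|_{U_k}$ appears, and $\|g_w\|_{U_k}=0$ because $g_w\in V_{\phi_F}^\infty$ with $\phi_F\neq 0$ gives $\mathbb{E}(g_w|\mathcal{F}_{k-1})=0$ while $\|g\|_{U_k}=\|\mathbb{E}(g|\mathcal{F}_{k-1})\|_{U_k}$. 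Hence Corollary \ref{cornineqcor} kills the inner integral pointwise in the face variables, which is the paper's proof (note it integrates the $k$ transverse variables first, the opposite of your order). No induction on $k$ and no use of Lemma \ref{kisebb} is needed; as written, your inductive step via Lemma \ref{kisebb} is left unspecified precisely where the vanishing has to be produced, so the argument does not close.
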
 

\begin{proof} Let $F\subset K_n$ be an $n-k$ dimensional face such that $h=\sum_{v\in F}t(v)\neq 0$. Without loss of generality (using the symmetries of $K_n$) we can assume that $F$ is obtained by changing the first $n-k$ coordinates in all possible ways in $\{0,1\}^n$ such that the last $k$-coordinates are given by a fix nonzero vector $w\in\{0,1\}^k$. The elements in $C^n_0(\bA)$ can be represented by vectors $(t_1,t_2,\dots,t_n)\in\bA^n$ and $\psi_v(t_1,t_2,\dots,t_n)=\sum_{i=1}^n t_iv_i$.
By the non-stand Fubini theorem \cite{ESz} we have that the integral in the lemma is equal to
$$\int_{t_1,t_2,\dots,t_{n-k}}\int_{t_{n-k+1},\dots,t_n}\prod_{v\in K_n}\psi_v\circ f_v~d\mu$$
and so it is enough to show that for every fixed $t_1,t_2,\dots,t_{n-k}$ the inner integral is $0$.
For every $u\in\{0,1\}^k$ let $$g_u(x)=\prod_{v\in\{0,1\}^{n-k}}f_{(v,u)}\bigl(x+\sum_{i=1}^{n-k}v_it_i\bigr)$$ 
and let $G$ denote the function system $\{g_u^{\epsilon(u)}\}_{u\in K_k}$.
We have that (for fixed $t_1,t_2,\dots,t_{n-k}$)
$$\int_{t_{n-k+1},\dots,t_n}\prod_{v\in K_n}\psi_v\circ f_v~d\mu=g_0\mathcal{K}_k(G)(0).$$
Our condition implies that $g_w$ is contained in the nontrivial module $V_h^\infty$ and so $\|g_w\|_{U_k}=0$. It follows from corollary \ref{cornineqcor} that $\mathcal{K}_k(G)(0)=0$ which completes the proof.
\end{proof}

\bigskip

\noindent{\it Proof of theorem \ref{charpres}:}

We start with $(1)$ implies $(2)$.

For a character $\chi\in\hat{A_k}$ let $V_\chi^\infty(N)$ denote the set of functions $g$ in $L^\infty(N)$ satisfying $g(x+a)=g(x)\chi(a)$ for every $n\in N$ and $a\in A_k$. It is clear that $\Psi\circ g$ is in $V^\infty_{\tau_k(\chi)}$ for such a function $g$.

We proceed by induction on $k$. If $k=0$ then $N$ is a one point structure and there is nothing to prove. Assume that the statement holds for $k-1$. This means that $\Psi$ composed with the projection $\pi_{k-1}$ from $N$ to the $k-1$ step factor $N_{k-1}$ is a rooted measure preserving factor of $\bA$.

Using continuity we get that the image of $C^k(\bA)$ in $C^k(N)$ under $\Psi$ is a compact subset. The same statement holds for the spaces $C^k_x(\bA)$. This means that the measure preserving property of these maps implies surjectivity automatically.
It remains to show that $\Psi$ is rooted measure preserving.

By applying an appropriate translation in $\bA$ it is enough to prove the rooted measure preserving property in the case $x=0$. Assume that $\Psi(0)=y\in N$.
Let $\nu$ denote the probability distribution on $C_y^n(N)$ obtained by composing the uniform distribution on $C_0^n(\bA)$ by $\Psi$. Note that $C_y^n(N)$ is a $C^n_0(\mathcal{D}_k(A_k))$-bundle over $C^n_{\pi_{k-1}(y)}(N_{k-1})$. For this reason it is enough to show that $\nu$ is invariant under the natural action of $C_0^n(\mathcal{D}_k(A_k))$ on $C_y^n(N)$.
This invariance can be proved by showing for a function system $U$ which linearly spans an $L^1$-dense set in $L^\infty(C_y^n(N))$ that for every $r\in C_0^n(\mathcal{D}_k(A_k))$ and $u\in U$ the equation $\int u~d\nu=\int u^r~d\nu$ holds. (The shift $u^r$ of $u$ is defined as the function satisfying $u^r(y)=u(y+r)$.)

We will set
$$U=\{\prod_{v\in K_n}c_v\circ f_v~|~f_v\in V^\infty_{\chi_v}(N),\chi_v\in\hat{A_k},~~v\in K_n\}$$
where $c_v:C^n_y\rightarrow N$ is the map defined by $c_v(f)=f_v$.
The set $U$ is closed under multiplication and contains a separating system of continuous functions. It follows from the Stone-Weierstrass theorem that every function in $L^\infty(C_y^n(N))$ can be approximated by some finite linear combination of elements from $U$.

Let $\{f_v\}_{v\in K_v}$ be a function system with $f_v\in V^\infty_{\chi_v}$ for some character system $\{\chi_v\}_{v\in K_n}$.
We will denote by $t:K_n\rightarrow\hat{A_k}$ the function with $t(v)=\chi_v$. 
Let $f=\prod_{v\in K_n}c_v\circ f_v$. Let $r\in C_0^n(\mathcal{D}_k(A_k))$. We have that $f^r=\prod_{v\in K_n}c_v\circ f_v^{r_v}$ where $r_v$ is the component of $r$ at $v$. This means that 
\begin{equation}\label{chmult}
f^r=f\prod_{v\in K_n}\chi_v(r_v).
\end{equation}
There are two possibilities.
In the first case $t\notin Z_{n,k}^*(\hat{A_k})$. In this case by the character preserving property of $\Psi$ and by lemma \ref{charconv} we get that $\int f~d\nu=0$ and so by (\ref{chmult}) we have $\int f~d\nu=\int f^r~d\nu$.
In the second case $t\in Z_{n,k}^*(\hat{A_k})$. Then $r\in{\rm ker}(t)$ and so we have by (\ref{chmult}) that $f=f^r$.

\medskip

To see that $(2)$ implies $(3)$ notice that a random element in $C_x^{n+1}(\bA)$ (resp. $C_{\Psi(x)}^{n+1}(N)$) restricted to an $n$ dimensional face of $\{0,1\}^{n+1}$ not containing $0^{n+1}$ is $C^n(\bA)$ (resp. $C^n(N)$) with the uniform distribution. 

We finish with $(3)$ implies $(1)$. This follows from the fact that the measure preserving property guarantees that $\Psi$ preserves all the Gowers norms. Consequently the maps $\tau_i$ have all trivial kernels.

\subsection{Nil-$\sigma$-algebras}

For a $\sigma$-algebra $\mathcal{B}\subset\mathcal{A}$ we denote by $[\mathcal{B},k]^*$ the set of functions $f:\bA\rightarrow\mathbb{C}$ such that $|f|=1$ and $(f)^\diamond$ is measurable in $$\bigcup_{v\in\{0,1\}^{k+1}}\psi^{-1}_v(\mathcal{B}_{k-1}).$$

\bigskip

\begin{definition}\label{nilsigma} Let $\{\hat{A}_i\}_{i=1}^k$ be a sequence of countable abelian groups such that $\hat{A}_i\subset\hat{\bA}_i$ for $1\leq i\leq k$. For every $1\leq i\leq k$ let $J_i=\{\chi_\phi\}_{\phi\in \hat{A}_i}$ be a system of $i$-degree characters with $\chi_\phi\in V^*_\phi$ and let $\mathcal{B}_i$ denote the $\sigma$-algebra generated by the functions in $\cup_{j=1}^i J_j$ ($\mathcal{B}_0$ is defined as the trivial $\sigma$-algebra.)
We say that $\mathcal{B}=\mathcal{B}_k$ is a degree-$k$ nil-$\sigma$-algebra if for every $1\leq i\leq k$ the following two conditions hold.
\begin{enumerate}
\item $\chi_\phi\in [\mathcal{B}_{i-1},i]^*$ for every $\phi\in \hat{A}_i$,
\item $\mathcal{B}_i\cap\mathcal{F}_{i-1}=\mathcal{B}_{i-1}$.
\end{enumerate}
\end{definition}  

We will need the next lemma.

\begin{lemma}\label{sepnil} Every separable $\sigma$-algebra $\mathcal{C}$ in $\mathcal{F}_k$ is contained in degree $k$ nil-$\sigma$-algebra.
\end{lemma}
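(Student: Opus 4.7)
The plan is to proceed by induction on $k$. The base case $k=0$ is vacuous since $\mathcal{F}_0$ is the trivial $\sigma$-algebra. For the inductive step, fix a separable $\mathcal{C}\subset\mathcal{F}_k$ generated by a countable family of bounded functions $\{f_n\}_{n=1}^\infty$. The idea is to apply the $k$-th order Fourier decomposition
$$f_n=\mathbb{E}(f_n|\mathcal{F}_{k-1})+\sum_{\phi\in S_n}f_{n,\phi},\quad f_{n,\phi}\in V_\phi,$$
with $S_n\subset\hat{\bA}_k\setminus\{0\}$ countable, let $\hat{A}_k\subset\hat{\bA}_k$ be the countable subgroup generated by $\bigcup_n S_n$, route all the $f_n$'s through a single countable collection of $k$-th order characters indexed by $\hat{A}_k$, push everything else down into $\mathcal{F}_{k-1}$, and then apply induction.

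For each $\phi\in\hat{A}_k$ I pick a representative $\chi_\phi\in V^*_\phi$ with $\chi_0=1$. Since $V_\phi$ is a rank-one $L^\infty(\mathcal{F}_{k-1})$-module generated by $\chi_\phi$, I write $f_{n,\phi}=\chi_\phi h_{n,\phi}$ with $h_{n,\phi}\in L^\infty(\mathcal{F}_{k-1})$. For any $\phi_1,\phi_2\in\hat{A}_k$ the product $\chi_{\phi_1}\chi_{\phi_2}$ lies in $V^*_{\phi_1+\phi_2}$ (it has modulus one and its $\diamond$-expression factors as the product of the two $\diamond$-expressions, so Lemma \ref{kisebb} still applies), hence $\chi_{\phi_1}\chi_{\phi_2}=c(\phi_1,\phi_2)\chi_{\phi_1+\phi_2}$ for some unit-modulus $c(\phi_1,\phi_2)\in L^\infty(\mathcal{F}_{k-1})$. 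Lemma \ref{kisebb} also yields for each $\phi$ a countable family $\{u_{\phi,j}\}\subset L^\infty(\mathcal{F}_{k-1})$ such that $(\chi_\phi)^\diamond$ is measurable in the sub-$\sigma$-algebra on $C^{k+1}(\bA)$ generated by $\{\psi_v\circ u_{\phi,j}\}_{v,j}$. I collect all of $\mathbb{E}(f_n|\mathcal{F}_{k-1})$, $h_{n,\phi}$, $c(\phi_1,\phi_2)$ and $u_{\phi,j}$ into a single countable family in $L^\infty(\mathcal{F}_{k-1})$; it generates a separable $\mathcal{C}'\subset\mathcal{F}_{k-1}$, and by the inductive hypothesis $\mathcal{C}'\subset\mathcal{B}_{k-1}$ for some degree-$(k-1)$ nil-$\sigma$-algebra $\mathcal{B}_{k-1}$. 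Setting $J_k=\{\chi_\phi\}_{\phi\in\hat{A}_k}$, I let $\mathcal{B}_k$ be the $\sigma$-algebra generated by $\mathcal{B}_{k-1}\cup J_k$. Then $\mathcal{C}\subset\mathcal{B}_k$ and axiom (1) of Definition \ref{nilsigma} holds by construction.

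The main obstacle is verifying axiom (2), $\mathcal{B}_k\cap\mathcal{F}_{k-1}=\mathcal{B}_{k-1}$. Here I would use the orthogonal decomposition $L^2(\bA,\mathcal{F}_k)=\bigoplus_\phi V_\phi$. Because the correction factors $c(\phi_1,\phi_2)$ were arranged to lie in $\mathcal{B}_{k-1}$, the $*$-algebra generated over $L^\infty(\mathcal{B}_{k-1})$ by $J_k$ collapses to $\{g\chi_\phi:g\in L^\infty(\mathcal{B}_{k-1}),\,\phi\in\hat{A}_k\}$, and its $L^2$-closure is
$$\bigoplus_{\phi\in\hat{A}_k}\overline{\chi_\phi\,L^\infty(\mathcal{B}_{k-1})}^{L^2}\subset\bigoplus_{\phi\in\hat{A}_k}V_\phi,$$
which contains $L^2(\bA,\mathcal{B}_k)$. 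Intersecting with $L^2(\bA,\mathcal{F}_{k-1})=V_0$ annihilates all $\phi\neq 0$ components and leaves exactly $L^2(\mathcal{B}_{k-1})$; applying this to indicators of sets in $\mathcal{B}_k\cap\mathcal{F}_{k-1}$ gives the desired equality of $\sigma$-algebras. The delicate point is precisely the bookkeeping in the previous paragraph: one must ensure all the correction cocycles $c(\phi_1,\phi_2)$, together with the $\diamond$-witnesses $u_{\phi,j}$, are included in $\mathcal{C}'$ so that the inductive hypothesis gives a single $\mathcal{B}_{k-1}$ in which every piece of lower-order data appearing in $\mathcal{B}_k$ is already measurable.
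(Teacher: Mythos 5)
Your proposal is correct and follows essentially the same route as the paper's proof: induct on $k$, use the $k$-th order Fourier decomposition to extract a countable subgroup $\hat{A}_k\subset\hat{\bA}_k$ with chosen characters $\chi_\phi\in V^*_\phi$, gather the lower-order data into a separable sub-$\sigma$-algebra of $\mathcal{F}_{k-1}$, apply the inductive hypothesis to obtain $\mathcal{B}_{k-1}$, and adjoin the characters to form $\mathcal{B}_k$. The differences are only expository: the paper captures the multiplication cocycles and the $\diamond$-witnesses implicitly, by adjoining $\mathcal{C}$, the functions $\chi_\phi$ and the witnessing $\sigma$-algebras $\mathcal{B}_\phi$ to $\mathcal{C}'$ and then intersecting with $\mathcal{F}_{k-1}$, whereas you enumerate these pieces explicitly and also write out the module-orthogonality verification of the condition $\mathcal{B}_k\cap\mathcal{F}_{k-1}=\mathcal{B}_{k-1}$, which the paper declares to be clear.
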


\begin{proof} The proof is an induction on $k$. Assume that the statement holds for $k-1$. 
Let $\hat{A}_k\subset\hat{\bA}_k$ denote the subgroup generated by all the modules $V_\phi$ that are not orthogonal to $L^2(\mathcal{C})$. Since $\mathcal{C}$ is separable and the modules are pairwise orthogonal we get that $\hat{A}_k$ is a countable group. Let us choose a representative $\chi_\phi$ from each $V^*_\phi$ with $\phi\in \hat{A}_k$ and assume that $\chi_\phi\in[\mathcal{B}_\phi,k]^*$ for some separable $\sigma$-algebra $\mathcal{B}_\phi$ in $\mathcal{F}_{k-1}$. Let $\mathcal{C}'$ be the $\sigma$-algebra generated by $\mathcal{C}$, the functions $\{\chi_\phi\}_{\phi\in \hat{A}_k}$ and the $\sigma$-algebras $\{\mathcal{B}_\phi\}_{\phi\in \hat{A}_k}$. By induction we can embed $\mathcal{C}'\cap\mathcal{F}_{k-1}$ into a degree $k-1$ nil-$\sigma$-algebra $\mathcal{B}_{k-1}$. We define $\mathcal{B}_k$ as the $\sigma$-algebra generated by $\mathcal{B}_{k-1}$ and the system $\{\chi_\phi\}_{\phi\in \hat{A}_k}$.
It is clear that $\mathcal{B}_k$ is a $k$-degree nil-$\sigma$-algebra. We have to check that it contains $\mathcal{C}$. 
To see this let $f\in L^\infty(\mathcal{C})$ be a function with $k$-th order decomposition $f=\sum f_\phi$. If $f_\phi\neq 0$ then $\phi\in \hat{A}_k$ and $f_\phi\chi^{-1}_\phi\in\mathcal{F}_{k-1}\cap\mathcal{C}'$. It follows that $f_\phi\chi^{-1}_\phi\in\mathcal{B}_{k-1}$ and so $f_\phi\in\mathcal{B}_k$.

\end{proof}

\subsection{The main theorem for ultra product groups}

\bigskip

\begin{theorem}\label{nilnil} For every degree-$k$ nil-$\sigma$-algebra $\mathcal{B}$ there is a character preserving $k$-step nilfactor $\Psi:\bA\rightarrow N$ such that $\mathcal{B}$ (up to $0$ measure sets) is equal to the $\sigma$-algebra generated by $\Psi$.
\end{theorem}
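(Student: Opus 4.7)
The plan is to proceed by induction on $k$. The case $k=0$ is trivial, with $N$ a point. For the inductive step, assume we have a character-preserving $(k-1)$-step nilspace factor $\Psi_{k-1}:\bA\to N_{k-1}$ whose generated $\sigma$-algebra agrees with $\mathcal{B}_{k-1}$ up to null sets. The goal is to construct a compact $k$-step nilspace $N_k$, realized as an $A_k$-bundle over $N_{k-1}$ where $A_k$ is the Pontryagin dual of the countable discrete group $\hat{A}_k$, together with a continuous morphism $\Psi_k:\bA\to N_k$ lifting $\Psi_{k-1}$ and encoding the characters $\chi_\phi$ for $\phi\in\hat{A}_k$.

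The main work is to build the cocycle that defines this bundle extension. For each $\phi\in\hat{A}_k$, the condition $\chi_\phi\in[\mathcal{B}_{k-1},k]^*$ combined with the inductive identification $\mathcal{B}_{k-1}=\sigma(\Psi_{k-1})$ forces $(\chi_\phi)^\diamond$ to descend through the surjection $C^{k+1}(\bA)\to C^{k+1}(N_{k-1})$ to a measurable function on $C^{k+1}(N_{k-1})$. Using the concatenation structure of $(k+1)$-cubes as adjacent $k$-cubes, this descended function can be re-packaged as a degree $(k-1)$ multiplicative cocycle $\rho_\phi:C^k(N_{k-1})\to\mathbb{T}$; the antisymmetry and additivity axioms of Definition \ref{cocycle} follow from the alternating-product definition of $(\cdot)^\diamond$ and the compatibility of opposing cube faces. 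Assemble the family $\{\rho_\phi\}_{\phi\in\hat{A}_k}$ into a single cocycle $\rho:C^k(N_{k-1})\to A_k$ by Pontryagin duality: at each cube $c$, the map $\phi\mapsto\rho_\phi(c)$ must be a character of $\hat{A}_k$, which follows from the multiplicative compatibility of the representatives $\chi_\phi$ modulo factors living in $L^\infty(\mathcal{F}_{k-1})$ (those factors drop out when forming the cocycle). Applying Proposition \ref{meascont} iteratively, or directly to the $A_k$-valued cocycle $\rho$, produces $N_k$; the characters $\chi_\phi:\bA\to\mathbb{T}$ together with $\Psi_{k-1}$ canonically define the lift $\Psi_k:\bA\to N_k$, with continuity in the generating system supplied by Lemma \ref{topgen}.

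It remains to verify the two required properties of $\Psi_k$. Character preservation reduces via Theorem \ref{charpres} to injectivity of $\tau_k:\hat{A}_k\to\hat{\bA}_k$, which is immediate from the inclusion $\hat{A}_k\subset\hat{\bA}_k$ built into the definition of a nil-$\sigma$-algebra, while injectivity of the lower $\tau_i$ is inherited from the induction. That $\sigma(\Psi_k)$ equals $\mathcal{B}_k$ up to null sets holds in one direction by construction, since all generators $\chi_\phi$ factor through $\Psi_k$; in the other direction one applies the $k$-th order Fourier decomposition of a $\Psi_k$-measurable function and invokes the condition $\mathcal{B}_k\cap\mathcal{F}_{k-1}=\mathcal{B}_{k-1}$ to control the $\mathcal{F}_{k-1}$-measurable coefficients, which by induction lie in $\mathcal{B}_{k-1}\subset\mathcal{B}_k$. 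The step I expect to be the main obstacle is the cocycle extraction itself: turning the bare measurability statement for $(\chi_\phi)^\diamond$ into a genuine degree $(k-1)$ cocycle, and then showing that the assembled $A_k$-valued cocycle is continuous in the sense required by Proposition \ref{meascont} rather than merely measurable. The latter point likely needs a careful invocation of Lemma \ref{kisebb} and the Stone-Weierstrass argument underlying Lemma \ref{topgen}, since only continuity (not measurability) guarantees that the bundle $M$ is a compact topological nilspace rather than just a measurable one.
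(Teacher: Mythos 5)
Your overall architecture (induction on $k$, descending $(\chi_\phi)^\diamond$ through $\Psi_{k-1}$ to a cocycle on cubes of $N_{k-1}$, building the extension via Proposition \ref{meascont}, and defining $\Psi_k$ from the characters) matches the paper, but two of your key steps do not work as stated. First, the cocycle you need lives on $C^{k+1}(N_{k-1})$ and has degree $k$, not on $C^{k}(N_{k-1})$ with degree $k-1$: an extension of the $(k-1)$-step nilspace $N_{k-1}$ in which the new structure group enters at the top degree $k$ is classified by a degree-$k$ cocycle, and the measurable function to which $(\chi_\phi)^\diamond$ descends already \emph{is} (an almost-everywhere version of) that object. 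There is no ``re-packaging by concatenation'' from $C^{k+1}$ down to $C^{k}$ --- a function of a $(k+1)$-cube is not a function of a single $k$-cube --- and a degree-$(k-1)$ cocycle would insert $A_k$ at level $k-1$, so the resulting bundle could not carry the $k$-th order characters $\chi_\phi$.

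Second, and more seriously, the Pontryagin-duality assembly of the whole family $\{\rho_\phi\}_{\phi\in\hat{A}_k}$ into a single $A_k$-valued cocycle is obstructed: for fixed $c$ the map $\phi\mapsto\rho_\phi(c)$ is a character of $\hat{A}_k$ only if the representatives are chosen multiplicatively in $\phi$, whereas in general $\chi_{\phi_1}\chi_{\phi_2}$ and $\chi_{\phi_1\phi_2}$ differ by a unimodular function measurable in $\mathcal{B}_{k-1}$ whose $\diamond$ does \emph{not} drop out --- it descends to a nontrivial coboundary on $C^{k+1}(N_{k-1})$, so the cocycles $\kappa_{\phi_1\phi_2}$ and $\kappa_{\phi_1}\kappa_{\phi_2}$ are only cohomologous, not equal, and an exactly multiplicative, torsion-compatible choice need not exist. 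The paper avoids this by working with a finitely generated subgroup $\hat{G}_2\subset\hat{G}$, choosing generators of its cyclic factors, correcting each generator of finite order $d$ so that $\rho_\phi^d=1$ exactly (Claim II, absent from your sketch), extending $N_{k-1}$ by the dual group $G_2$ of that finitely generated group, and only then reaching the countable case by an inverse limit of these factors; your proposal has no substitute for either the torsion correction or the inverse-limit step. Finally, the step you flag as the main obstacle is indeed nontrivial and is settled in the paper by two separate null-set corrections: the descended function is only an almost-cocycle and is corrected using a result of \cite{NP}, and then $\chi_\phi$ itself is altered on a null set to $\rho_\phi$ so that $(\rho_\phi)^\diamond$ factors exactly through $\Psi_{k-1}$, via the three-cube $T_{k+1}$ argument. (One small point in your favour: Proposition \ref{meascont} requires only a measurable cocycle to produce a compact bundle, so the continuity issue you raise concerns $\Psi_k$ itself, which the paper obtains from the continuity of corner convolutions on $\bA$ together with Lemma \ref{topgen}.)
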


As a consequence we get the next theorem.

\begin{theorem}\label{ultdecomp} Let $k$ be a fixed natural number. Then every function $f\in L^\infty(\bA)$ has a decomposition $f=f_s+f_r$ such that $\|f_r\|_{U_{k+1}}=0$ and $f_s=\Psi\circ g$ for some character preserving $k$-step nilfactor $\Psi:\bA\rightarrow N$ and measurable function $g:N\rightarrow\mathbb{C}$.
\end{theorem}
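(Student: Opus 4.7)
The plan is to assemble Theorem \ref{ultdecomp} from the machinery developed in the preceding subsections, reducing it directly to the two main technical inputs: Lemma \ref{sepnil} and Theorem \ref{nilnil}. First I invoke the background result recalled in the section on higher order Fourier analysis: every $f\in L^\infty(\bA)$ has a unique orthogonal decomposition $f=f_s+f_r$ with $f_s=\mathbb{E}(f|\mathcal{F}_k)$ and $\|f_r\|_{U_{k+1}}=0$. This settles the quasirandom part and the bound $\|f_r\|_{U_{k+1}}=0$, so the entire remaining task is to represent $f_s$ through a character preserving $k$-step nilfactor.

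Next, since $f_s$ is a single bounded $\mathbb{C}$-valued measurable function, the sub-$\sigma$-algebra $\mathcal{C}\subseteq\mathcal{F}_k$ generated by the preimages under $f_s$ of open balls in $\mathbb{C}$ with rational centers and rational radii is countably generated, and in particular separable. I then apply Lemma \ref{sepnil} to embed $\mathcal{C}$ into a degree-$k$ nil-$\sigma$-algebra $\mathcal{B}\subseteq\mathcal{F}_k$. Theorem \ref{nilnil} then produces a character preserving $k$-step nilfactor $\Psi:\bA\to N$ such that the $\sigma$-algebra generated by $\Psi$ coincides with $\mathcal{B}$ up to null sets.

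Finally, since $f_s$ is measurable with respect to the $\sigma$-algebra generated by $\Psi$ (up to null sets) and $N$ is a compact Hausdorff (Polish) target, the Doob--Dynkin factorization lemma provides a Borel measurable function $g:N\to\mathbb{C}$ with $f_s=g\circ\Psi$ almost everywhere (this is the composition written as $\Psi\circ g$ in the theorem statement). Adjusting $f_s$ on a null set if necessary does not affect $\|f_r\|_{U_{k+1}}=0$ nor any $L^p$ quantities, so the decomposition has the desired form.

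The main obstacle does not lie in the present assembly — every step above is either routine measure theory or a direct citation. The real work is absorbed into Theorem \ref{nilnil}, which must promote an abstractly specified degree-$k$ nil-$\sigma$-algebra to a genuine continuous morphism into a compact $k$-step nilspace with the character preserving (equivalently, by Theorem \ref{charpres}, measure preserving) property. Granted that theorem together with Lemma \ref{sepnil}, the present statement is a short corollary.
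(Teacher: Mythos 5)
Your proposal is correct and follows essentially the same route as the paper: take $f_s=\mathbb{E}(f|\mathcal{F}_k)$, note the separable $\sigma$-algebra it generates embeds in a degree-$k$ nil-$\sigma$-algebra by Lemma \ref{sepnil}, and invoke Theorem \ref{nilnil} to realize it via a character preserving nilfactor. The only difference is that you spell out the Doob--Dynkin factorization step, which the paper leaves implicit.
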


Note that the decomposition $f=f_n+f_r$ is unique up to a $0$ measure change.
We devote the rest of this chapter to the proof of these two theorems. 

\bigskip

\noindent{\it Proof of theorem \ref{nilnil}:}~~We show the theorem by induction on $k$. If $k=0$ then $\mathcal{B}$ is the trivial $\sigma$-algebra so the statement is trivial.
Assume that the statement holds for $k-1$. We can assume that $\mathcal{B}$ is generated by a $k-1$ step nil-$\sigma$-algebra $\mathcal{B}_{k-1}$ and a system of characters $\{\chi_\phi\}_{\phi\in \hat{G}}$ for some countable subgroup $\hat{G}\subset\hat{\bA}_k$ such that $\chi_\phi\in V^*_\phi$ and $\chi_\phi\in [\mathcal{B}_{k-1},k]^*$.

By induction we have that $\mathcal{B}_{k-1}$ is equal to the $\sigma$-algebra generated by a $k-1$ step character preserving nilfactor $\Psi_{k-1}:\bA\rightarrow N_{k-1}$.
By abusing the notation we will denote by $\Psi_{k-1}$ all the maps $C^n(\bA)\rightarrow C^n(N_{k-1})$ induced by $\Psi_{k-1}$.

\bigskip

\noindent{Claim I.}~~{\it for every $\chi_\phi$ there is a function $\rho_\phi$ with $|\rho_\phi|=1$ and $\|\chi_\phi-\rho_\phi\|_1=0$ such that the function $(\rho_\phi)^\diamond$ is equal to the composition of $\Psi_{k-1}:C^{k+1}(\bA)\rightarrow C^{k+1}(N_{k-1})$ with a degree $k$ (multiplicative) cocycle $\kappa_\phi:C^{k+1}(N_{k-1})\rightarrow\mathbb{C}$.}

\medskip

First notice that $(\chi_\phi)^\diamond$ is a (multiplicative) coboundary on $C^{k+1}(\bA)$. 
On the other hand since $\chi_\phi\in[\mathcal{B}_{k-1},k]^*$ it follows that there is a Borel function $g:C^{k+1}(N)\rightarrow\mathbb{C}$ with $|g|=1$ such that $\Psi_{k-1}:C^{k+1}(\bA)\rightarrow C^{k+1}(N_{k-1})$ composed with $g$ is almost everywhere equal to $(\chi_\phi)^\diamond$. The fact that $(\chi_\phi)^\diamond$ satisfies the cocycle axioms and the fact that $\Psi_{k-1}$ is a measure preserving nilspace factor together imply that $g$ satisfies the cocycle axioms for almost every pair of cubes. It is proved in \cite{NP} that an almost cocycle in this sense can be corrected to a precise cocycle with a $0$ measure change. So let $\kappa_\phi$ denote such a corrected precise cocycle and let $q$ denote the composition of $\Psi_{k-1}$ with $\kappa_\phi$. We have that $q$ and $(\chi_\phi)^\diamond$ differ on a $0$ measure set. Not that $q$ is a multiplicative cocycle on $C^{k+1}(\bA)$ since $\Psi_{k-1}$ preserves the cubic structure. To prove the claim it is enough to show that $q'=q^{-1}(\chi_\phi)^\diamond$ is the coboundary corresponding to a function $p:\bA\rightarrow\mathbb{C}$ which is almost everywhere $1$.
The method to find $p$ is to verify that for every fixed $x\in\bA$ the function $q'$ restricted to $C^{k+1}_x(\bA)$ takes a constant value $p(x)$ at almost every point of $C^{k+1}_x(\bA)$. Then we show that $(p)^\diamond=q'$.

Let $t$ be a random element in $\Hom_{1^{k+1}\rightarrow x}(T_{k+1},\bA)$ where $T_{k+1}$ is the three cube of dimension $k+1$.
We have that $\Phi_v\circ t$ is a random element in $C^{k+1}(\bA)$ if $v\neq 0^{k+1}$. Thus with probability one we have that 
\begin{equation}\label{oneeq}
\prod_{v\in K_{k+1}}q'(\Phi_v\circ t)^{\epsilon(v)}=1
\end{equation}
since $q'$ is almost everywhere $1$. On the other hand (using the cocycle properties of $q'$) the left side of (\ref{oneeq}) is equal to $q'(\omega\circ t)/q'(\Phi_{0^{k+1}}\circ t)$.
The values $q'(\omega\circ t)$ and $q'(\Phi_{0^{k+1}}\circ t)$ are independent and have the same distribution as $q'$ on $C_x^{k+1}(\bA)$. 
This is only possible if this distribution is concentrated on one value $p(x)$.

Let $c\in C^{k+1}(\bA)$ be an arbitrary element and let $t$ be a random morphism of $T_{k+1}$ into $\bA$ with the restriction that $\omega\circ t$ is equal to $c$. Then with probability one $q'(\Phi_v\circ t)=p(c(v))$ for every $v\in\{0,1\}^{k+1}$. On the other hand by the cocycle property of $q'$ we have that
$$q'(c)=\prod_{v\in\{0,1\}^{k+1}}q'(\Phi_v\circ t)^{\epsilon(v)}.$$ This implies that $(p)^\diamond=q'$ and so $q=(\rho_\phi)^\diamond$ where $\rho_\phi=\chi_\phi/p$. 

\bigskip

\noindent{Claim II.}~~{\it If $\rho_\phi$ represents an element of order $d$ in $\hat{\bA}_k$ then $\rho_\phi^d$ is constant on the fibres of $\Psi_{k-1}$. Furthermore there is a function $f$ with $|f|=1$ depending on the factor $\Psi_{k-1}$ such that $(\rho_\phi/f)^d=1$.}
\bigskip

Since $\rho_\phi^d$ is measurable in $\mathcal{F}_{k-1}$ we get by the nil-$\sigma$-algebra properties of $\mathcal{B}_k$ that it is also measurable in $\mathcal{B}_{k-1}$. By our induction this implies that there is a function $g:\bA\rightarrow\mathbb{C}$ with $|g|=1$ such that $g$ is constant on the fibres of $\Psi_{k-1}$ and $g=\rho_\phi^d$ almost surely.
We have that for every $x\in\bA$ the function $(\rho_\phi^d)^\diamond/(g)^\diamond$ is equal to $\rho_\phi(x)^d/g(x)$ on almost every point of the space $C_x^{k+1}(\bA)$. This means by the rooted measure preserving property of $\Psi_{k-1}$ and the fact that both $(\rho_\phi^d)^\diamond$ and $(g)^\diamond$ depend on the factor $\Psi_{k-1}$ that $\rho_\phi(x)^d/g(x)$ depends only on the factor $\Psi_{k-1}$. This shows the first part of the claim.
It is easy to see $\rho_\phi^d$ has a measurable $d$-th root $f$ which depends on the factor $\Psi_{k-1}$ since $\rho_\phi^d$ depends on $\Psi_{k-1}$. 

\bigskip

To formulate the next claim we introduce a notation. For $\phi\in\hat{G}$ we denote by $Q_\phi$ the set of functions that can be obtained as $\mathcal{K}_{k+1}(F)$ where $F=\{f_v\rho_\phi\}_{v\in K_{k+1}}$ is a function system such that $f_v$ is continuous in the factor $\Psi_{k-1}$ for every $v\in K_{k+1}$.

\bigskip

\noindent{Claim III.}~~{\it If $\hat{G}_2\subset \hat{G}$ is a finitely generated subgroup then the functions in $\cup_{\phi\in \hat{G}_2}Q_\phi$ topologically generate a $k$-step character preserving nilspace factor.}

\bigskip

The group $\hat{G}_2$ is the direct product of finitely many cyclic groups $\hat{C}_1,\hat{C}_2,\dots,\hat{C}_n$. Let $\phi_i$ be a generator of $\hat{C}_i$ for $1\leq i\leq n$. 
If $\phi_i$ has finite order $d$ for some $i$ then by the previous claim we can assume (by abusing the notation) that $\rho_{\phi_i}^d=1$.
The dual group $G_2$ of $\hat{G}_2$ is embedded as a subgroup in $(\mathbb{C}^*)^n$ as $\oplus_{i=1}^n C_i$ where $C_i$ is the unit circle if $\phi_i$ has infinite order and $C_i$ is the group of $d$-th roots of unity if $\phi_i$ has order $d$. In this embedding let $\beta_i$ denote the $i$-th coordinate. 
The functions $\{\beta_i\}_{i=1}^n$ form a generating system for the dual group of $G_2$.

Now let $\rho:\bA\rightarrow G_2$ be defined by $$\rho(x)=(\rho_{\phi_1}(x),\rho_{\phi_2}(x),\dots,\rho_{\phi_n}(x)).$$
We can define $(\rho)^\diamond$ using component wise operations. We have that there is a measurable cocycle $\kappa:C^{k+1}(N_{k-1})\rightarrow G_2$ such that $\rho$ is the composition of $\Psi_{k-1}$ and $\kappa$.
The cocycle $\kappa$ defines a measurable nilspace extension $N$ of $N_{k-1}$ by the group $G_2$. 
Our goal is to construct a continuous morphism $\Psi$ from $\bA$ to $N$. Then we will see that $\Psi$ is character preserving.

We use proposition \ref{meascont} to describe the structure of $N$. The elements of $N$ of the fiber above $x\in N_{k-1}$ are represented by shifts of the function $\kappa$ restricted to $C^{k+1}_x(N_{k-1})$ by elements from $G_2$. Using the multiplicative notation, this means that we choose an element $g\in G_2$ and then we multiply the restriction of $\kappa$ to $C^{k+1}_x(N_{k-1})$ by $g$ at every point. 
We define $\Psi:\bA\rightarrow N$ as the function which maps $x\in\bA$ to the function which is the restriction of $\kappa\rho(x)^{-1}$ to $C^{k+1}_y(N_{k-1})$ where $y=\Psi_{k-1}(x)$. In other words $\Psi(x)$ is the image of $(\rho)^\diamond_x$ under $\Psi_{k-1}$.

The fact that $\Psi$ is a morphism follows directly from the construction of the cubic structure on $N$.
For an element $f:C_x^{k+1}(N_{k-1})\rightarrow G_2$ which is $g$ times the restriction of $\kappa$ to $C_x^{k+1}(N_{k-1})$ we define $\beta_i'(f)$ as $\beta_i(g)$. It is clear from the definitions that $\Psi$ composed with $\beta_i'$ is equal to $\rho_{\phi_i}^{-1}$. This shows that $\Psi$ is character preserving. The see continuity we have to show that $\Psi$ composed with the functions in lemma \ref{topgen} are all continuous on $\bA$.
On the other hand by the rooted measure preserving property of $\Psi_{k-1}$ these functions are exactly the functions in $\cup_{\phi\in \hat{G}_2}Q_\phi$.
Since corner convolutions are continuous on $\bA$ we obtain the continuity of $\Psi$. Theorem \ref{charpres} shows that $\Psi$ is a nilspace factor of $\bA$. This finishes the proof of the claim.

\bigskip

The general case follows from the fact that $\hat{G}$ is the direct limit of its finitely generated subgroups. The construction in claim III. shows that if $\hat{G_2}\subset\hat{G_3}$ are two finitely generated subgroups in $\hat{G}$ then there is a continuous morphism from the nilspace factor corresponding to $\hat{G_3}$ to the factor corresponding to $\hat{G_2}$. It shows that all the characters $\chi_\phi$ are measurable in the inverse limit of these factors. This inverse limit is again character preserving so it is a factor. This completes the proof.

\bigskip

\noindent{\it Proof of theorem \ref{ultdecomp}:~} Let $f_s=\mathbb{E}(f|\mathcal{F}_k)$ and $f_r=f-f_s$. We have that $\|f_r\|_{U_{k+1}}=0$. By lemma \ref{sepnil} the $\sigma$-algebra generated by $f_s$ is contained in a degree-$k$ nil-$\sigma$-algebra $\mathcal{B}$. Then by theorem \ref{nilnil} we have that $f_s$ is measurable in a $k$-step character preserving nilspace factor of $\bA$.
 
\bigskip

\subsection{Regularization, inverse theorem and special families of groups} 

\bigskip

We prove theorem \ref{reglem}, theorem \ref{invthem}, theorem \ref{restreg} and theorem \ref{restinv}.

\noindent{\it Proof of theorem \ref{reglem}}~We proceed by contradiction. Let us fix $k$ and $F$. Assume that the statement fails for some $\epsilon>0$. 
This means that there is a sequence of measurable functions $\{f_i\}_{i=1}^\infty$ on the compact abelian groups $\{A_i\}_{i=1}^\infty$ with $|f_i|\leq 1$ such that $f_i$ does not satisfy the statement with $\epsilon$ and $n=i$. Let $\omega$ be a fixed non-principal ultra filter and $\bA=\prod_\omega A_i$.
We denote by $f$ the ultra limit of $\{f_i\}_{i=1}^\infty$.
By theorem \ref{ultdecomp} we have that $f=f_s+f_r$ where $\|f_r\|_{U_{k+1}}=0$ and $f_s=\Psi\circ g$ for some character preserving nilspace factor $\Psi:\bA\rightarrow N$ and measurable function $g:N\rightarrow\mathbb{C}$.
Now we use the theorem proved in \cite{NP} which says that $N$ is an inverse limit of finite dimensional nilspaces $\{N_i\}_{i=1}^\infty$.
Let $\mathcal{N}_i$ denote the $\sigma$-algebra generated by the projection to $N_i$. Then we have that 
$g=\lim_{i\to\infty}\mathbb{E}(g|\mathcal{N}_i)$ in $L^1$.
It follows that there is an index $j$ such that
$g_j=\mathbb{E}(g|\mathcal{N}_j)$ satisfies $\|g-g_j\|_1\leq\epsilon/3$.
Furthermore there is a Lipschitz function $h:N_j\rightarrow\mathbb{C}$ with $|h|\leq 1$ and Lipschitz constant $c$ such that $\|g_j-h\|_1\leq\epsilon/3$.

Since the factor $\Psi$ and the projection to $N_j$ is measure preserving we have that $q=\Psi\circ h$ satisfies that $\|q-f_s\|\leq 2\epsilon/3$.
Let $f_e=f-f_r-q$. 
The function $\Psi$ is a continuous function so there is a sequence of continuous functions $\{\Psi':A_i\rightarrow N_j\}_{i=1}^\infty$ such that $\lim_{\omega}\Psi'_i=\Psi$. It is easy to see that $\Psi'_i$ is an approximate morphism with error tending to $0$.
It follows from \cite{NS} that it can be corrected to a morphism $\Psi_i$ (if $i$ is sufficiently big) such that the maximum point wise distance of $\Psi_i$ and $\Psi_i'$ goes to $0$. As a consequence we have that $\lim_\omega\Psi_i=\Psi$.

Let $f^i_s=\Psi_i\circ h$, and let $f^i_r$ be a sequence of measurable functions with $\lim_\omega f^i_r=f_r$.
We set $f^i_e=f_i-f^i_s-f^i_r$. It is clear that $\lim_\omega f^i_s=q$ and $\lim_\omega f^i_e=f_e$.
We also have that $\lim_\omega\|f^i_r\|_{U_{k+1}}=\|f_r\|_{U_{k+1}}=0$ and $\lim_\omega (f^i_r,f^i_e+f^i_s)=(f_r,f_e+q)=0$.
Let $m$ be the maximum of the complexity of $N_i$ and $c$.
There is an index set $S$ in $\omega$ such that 
\begin{enumerate}
\item $\|f^i_r\|_{U_{k+1}}\leq F(\epsilon,m)$,
\item $\|f^i_e\|_1\leq \epsilon$,
\item $|(f^i_r,f^i_s+f^i_e)|\leq F(\epsilon,m)$,
\item $\Psi_i$ is at most $F(\epsilon,m)$ balanced,
\item $|\|f^i_s+f^i_e\|_{U_{k+1}}-\|f_i\|_{U_{k+1}}|\leq F(\epsilon,m)$,
\end{enumerate}
hold simultaneously on $S$.
Note that $\Psi$ itself is $0$ balanced.
This is a contradiction.

\bigskip

\noindent{\it Proof of theorem \ref{restreg}}~In the proof of \ref{reglem} the nilspace $N_j$ that we construct is a character preserving factor of $\bA$. This means that the $i$-th structure group of $N_j$ is embedded into $\hat{\bA}_i$. This shows that $N_j$ is a $\mathfrak{A}$-nilspace.  

\bigskip

\noindent{\it Proof of theorem \ref{invthem} and theorem \ref{restinv}}~It is clear that if we apply theorem \ref{reglem} with $\epsilon_2>0$ and function $F(a,b)=a/b$ then in the decomposition $f=f_s+f_e+f_r$ the scalar product $(f,f_s)$ is arbitrarily close to $(f_s,f_s)$ and $\|f_s\|_{U_{k+1}}$ is arbitrarily close to $\|f\|_{U_{k+1}}$ if $\epsilon_2$ is small enough (depending only on $\epsilon$). This means by corollary \ref{l2becs} that $(f_s,f_s)\geq2\epsilon^{2^k}/3$ holds if $\epsilon_2$ is small and also $(f,f_s)\geq\epsilon^{2^k}/2$ holds simultaneously.

The inverse theorem is special families follows in the same way from theorem \ref{restreg}.

\subsection{Limit objects for convergent function sequences}

First we first focus on the proof of theorem \ref{simplim}.

\bigskip

Let $\{f_i:A_i\rightarrow\mathbb{C}\}$ be a sequence of functions with $|f_i|\leq r$.
Let $\bA$ be the ultra product of $\{A_i\}_{i=1}^\infty$ and $f$ be the ultra limit of $\{f_i\}_{i=1}^\infty$.
We have that $M(f)=\lim_\omega M(f_i)=\lim M(f_i)$ for every moment $M$.

Let $g$ denote the projection of $f$ to the $\sigma$-algebra $\mathcal{F}$ generated by $\{\mathcal{F}_i\}_{i=1}^\infty$ and let $g_i=\mathbb{E}(g|\mathcal{F}_i)$. Since $\mathcal{F}_i$ is an increasing sequence of $\sigma$-algebras we have that $\lim_{i\to\infty}g_i=g$ in $L^1$.
By corollary \ref{cornineqcor} we have that $M(g)=M(f)$ for every moment $M$.

We define a sequence $\mathcal{C}_i$ of nil-$\sigma$-algebras recursively. Let $\mathcal{C}_1$ be a nil-$\sigma$-algebra in which $g_1$ is measurable. Let $\mathcal{C}_i\subset\mathcal{F}_i$ be a nil-$\sigma$-algebra containing $\mathcal{C}_{i-1}$ in which $g_i$ is measurable.
Finally define $\mathcal{C}$ as the $\sigma$-algebra generated by the system $\{\mathcal{C}_i\}_{i=1}^\infty$.
It is clear that $\mathcal{B}_i=\mathcal{C}\cap\mathcal{F}_i$ is a nil $\sigma$-algebra.
As in theorem \ref{nilnil} we can create a sequence $\Psi_i$ of nilspace factors generating the $\sigma$-algebra $\mathcal{B}_i$ in a way that these factors form an inverse system.
Let $\Psi:\bA\rightarrow N$ be the inverse limit of these factors. 

Since all the functions $g_i$ are measurable in the $\sigma$-algebra generated by $\Psi$ the function $g$ is also measurable in it. This means that there is a function $h:N\rightarrow\mathbb{C}$ such that $\Psi\circ h=g$.
Using the rooted measure preserving property of the factors $\Psi_i$ we have that $M(f)=M(g)=M(g_i)=M(\mathbb{E}(h|\Psi_i))=M(h)$ for every simple moment of degree $i$.
This completes the proof. 

\bigskip

The proof of theorem \ref{genlim} goes in a very similar way. The only difference is that we project all the functions $f^a\overline{f^b}$ with $a,b\in\mathbb{N}$ to $\mathcal{F}$. The resulting function system $g^{a,b}$ at almost every point $x$ describes the moments of a probability distribution on the complex disc of radius $r$.

\vskip 0.2in

\noindent
Bal\'azs Szegedy
\noindent
University of Toronto, Department of Mathematics,
\noindent
St George St. 40, Toronto, ON, M5R 2E4, Canada

\end{document}